\documentclass[10pt,a4,leqno]{amsart}
\usepackage{amsthm}
\usepackage{amsmath}
\usepackage{amssymb}
\usepackage{amsfonts}
\usepackage{mathrsfs}
\usepackage{graphicx}
\usepackage{bbm}
\usepackage[
colorlinks=true, citecolor=blue, linkcolor=blue, urlcolor=red]{hyperref}
\usepackage{color}
\usepackage[all]{xy}
\usepackage{comment}
\usepackage{here}
\usepackage{amscd}
\usepackage{caption}
\usepackage{listings}
\usepackage{setspace}
\usepackage{booktabs}
\allowdisplaybreaks

\newcommand{\mathsym}[1]{{}}
\newcommand{\unicode}[1]{{}}

\makeatletter

\@addtoreset{equation}{section}
\makeatother

\makeatletter
\@namedef{subjclassname@2020}{%
  \textup{2020} Mathematics Subject Classification}
\makeatother

\newtheoremstyle{my theoremstyle}
{1.0em}                    
    {1.0em}                    
    {\itshape}                   
    {}                           
    {\scshape}                   
    {.}                          
    {.5em}                       
    {}  

\newtheoremstyle{dfn}
{1.0em}                    
    {1.0em}                    
    {}                   
    {}                           
    {\scshape}                   
    {.}                          
    {.5em}                       
    {}  
    
\theoremstyle{my theoremstyle}
   \newtheorem{thm}{Theorem}[section]
   \newtheorem{lem}[thm]{Lemma}
   \newtheorem{prop}[thm]{Proposition}
   \newtheorem{cor}[thm]{Corollary}
   \newtheorem{conj}[thm]{Conjecture}
\theoremstyle{dfn}
   \newtheorem{dfn}[thm]{Definition}
   
\theoremstyle{remark}   
   
   \newtheorem{rmk}[thm]{{\scshape Remark}}

\newcommand{\Z}{\mathbb{Z}}
\newcommand{\Q}{\mathbb{Q}}

\newcommand{\C}{\mathbb{C}}
\newcommand{\F}{\overline{\mathbb{F}}_p}
\newcommand{\e}{\varepsilon}

\newcommand{\ub}{\underline{b}}
\newcommand{\ua}{\underline{a}}
\newcommand{\wF}{\widehat{\mathscr{F}}^{(\sigma)}}

\newcommand{\we}{\widehat{\varepsilon}}
\newcommand{\wE}{\widehat{E}}
\newcommand{\of}{\overline{f}}
\newcommand{\al}{\alpha}

\newcommand{\od}{\overline{d}}

\numberwithin{equation}{section}

\date{\today}

\begin{document}
\title[On the syntomic regulator and $p$-adic hypergeometric functions]{On the syntomic regulator of the Hesse cubic curves and $p$-adic hypergeometric functions}
\author{Yusuke Nemoto}
\date{\today}
\address{Graduate School of Science, Chiba University, 
Yayoicho 1-33, Inage, Chiba, 263-8522, Japan.}
\email{y-nemoto@waseda.jp}
\keywords{$p$-adic hypergeometric functions; congruence relations; syntomic regulator.}
\subjclass[2020]{14F30, 19F27, 11S80, 19F15, 33C05}

\maketitle

\begin{abstract}
We introduce a new type of $p$-adic hypergeometric function, which satisfies congruence relations similar to Dwork's $p$-adic hypergeometric function. 
Also, we prove that the syntomic regulators of the Hesse cubic curves are expressed in terms of the special values of our new $p$-adic hypergeometric functions.  
We also show that there is a transformation formula between our new $p$-adic hypergeometric function and a $p$-adic hypergeometric function of logarithmic type defined by Asakura. 
\end{abstract}

\section{Introduction} 
Let $p$ be a prime and $s \ge 1$ be an integer. 
For an $s$-tuple $\ua=(a_1, \ldots, a_{s}) \in \Z_p^s$ and an $(s-1)$-tuple $\ub=(b_1, \ldots, b_{s-1}) \in (\Z_p \setminus \Z_{\le 0})^{s-1}$, 
we define the {\it hypergeometric series} by 
\begin{align*}
F_{\ua; \ub}(t)={_{s}F_{s-1}}\left( 
\begin{matrix}
a_1, \cdots, a_s \\
b_1, \ldots b_{s-1}
\end{matrix}
; t
\right)=\sum_{n=0}^{\infty} \dfrac{(a_1)_n \cdots (a_s)_n}{(1)_n (b_1)_n \cdots (b_{s-1})_n} t^n \in \Q_p[[t]],  
\end{align*}
where $(a)_n$ denotes the Pochhammer symbol defined by $(a)_n=a (a+1) \cdots (a+n-1)$. 
Dwork \cite{Dw, Dw2} shows that if $\ub=\underline{1}:=(1, \ldots, 1)$ or $\ua$ and $\ub$ satisfy the conditions (i) and (ii) in Theorem \ref{theorem:1}, then $F_{\ua; \ub}(t) \in \Z_p[[t]]$. 
In this paper, we always assume these conditions.

For $a \in \Z_p$, let $a'$ be the {\it Dwork prime} defined by $(a+l)/p$, where $l \in \{0, \ldots, p-1\}$ is the unique integer such that $a + l \equiv 0 \pmod{p}$. 
Let $a^{(i)}$ be the {\it $i$th Dwork prime} defined by $a^{(i)}=(a^{(i-1)})'$ with $a^{(0)}=a$. 
Put $\ua'=(a'_1, \ldots, a'_s)$ and $\ub'=(b'_1, \ldots, b'_{s-1})$.

Dwork defines the $p$-adic hypergeometric function by $\mathscr{F}^{\rm Dw}_{\ua; \ub}(t)=F_{\ua; \ub}(t)/F_{\ua'; \ub'}(t^p)$ and proves the congruence relation 
\begin{align} \label{congruence}
\mathscr{F}^{\rm Dw}_{\ua; \ub}(t) \equiv \dfrac{[F_{\ua; \ub}(t)]_{<p^n}}{[F_{\ua'; \ub'}(t^p)]_{<p^n}} \pmod{p^n\Z_p[[t]]}
\end{align}
for any $n \ge 1$ (see Theorem \ref{theorem:1}). Here, for a power series $f(t)=\sum a_i t^i$, we write $[f(t)]_{<m}=\sum_{i < m} a_it^i$ for its truncation.  
As a consequence of this congruence, $\mathscr{F}^{\rm Dw}_{\ua; \ub}(t)$ defines a $p$-adic analytic function on a suitable domain, where its special values can be defined. 
Such special values occur in formulas for  the unit roots of hypergeometric motives (cf. \cite{Dw, As, As3}). 
Other $p$-adic hypergeometric functions, such as $\mathscr{F}^{(\sigma)}_{\ua; \ub}(t)$ and $\wF_{a, \ldots, a;\underline{1}}(t)$, are also defined by Asakura \cite{As}, the author \cite{N2} and Wang \cite{Wang}. 
These functions satisfy congruence relations analogous to \eqref{congruence}. 
From a geometric perspective, the syntomic regulators of hypergeometric motives are expressed in terms of these functions (\cite{Wang, As, As3}).

In this paper, we introduce a new type of $p$-adic hypergeometric function for $s=2$, which is a modification of Wang's $p$-adic hypergeometric function $\wF_{a, a; 1}(t)$. 
Let $p \ge 3$ be a prime. 
Let $W=W(\F)$ be the Witt ring of $\F$ and $\sigma$ be the $p$th Frobenius on $W[[t]]$ defined by $\sigma(t)= ct^{p}$ with $c \in 1+pW$. 
Suppose that $a \in \Z_p$ satisfies two conditions in Section \ref{defFhat}. Under these conditions, we have $F_{a, a; 2a}(t) \in \Z_p[[t]]$. 
Let $l \in \{0, \dots p-1 \}$ be the unique integer such that $a+l \equiv 0 \pmod{p}$. 
Define a power series by 
\begin{align*}
\widehat{G}^{(\sigma)}_{a, a;2a}(t) &=t^{-a} \int_0^t(t^a F_{a,a;2a} (t) - (-1)^lB_p(a,a)^{-1} [t^{a'} F_{a',a'; (2a)'}(t)]^{\sigma}) \dfrac{dt}t, 
\end{align*}
where $B_p(s, s)=\Gamma_p(s)^2/\Gamma_p(2s)$ is the {\it $p$-adic Beta function} (see Section \ref{definition}).  
Here, $\int_0^t(-)\frac{dt}t$ denotes the operator determined by  
$$\int_0^t t^{\al} \dfrac{dt}{t}= \frac{t^{\al}}{\al}, \quad \al \neq 0. $$ 
We define the {\it $p$-adic hypergeometric function} by 
$$\widehat{\mathscr{F}}_{a,a;2a}^{(\sigma)} (t)= \dfrac{\widehat{G}^{(\sigma)}_{a, a;2a}(t) }{F_{a,a;2a}(t)}. $$
If $a=1/2$, then the above function agrees with Wang's $p$-adic hypergeometric function (see Remark \ref{relation_Wang}).

We prove that $\widehat{\mathscr{F}}_{a,a;2a}^{(\sigma)} (t) \in W[[t]]$ (see Lemma \ref{integral}) and the following congruence relation.

\begin{thm} \label{main:1}
For any $n \in \Z_{\ge 1}$, we have  
$$\widehat{\mathscr{F}}_{a,a;2a}^{(\sigma)} (t) \equiv \dfrac{[\widehat{G}^{(\sigma)}_{a,a;2a}(t)]_{<p^n}}{[F_{a,a;2a}(t)]_{<p^n}} \pmod{p^nW[[t]]}. $$
\end{thm}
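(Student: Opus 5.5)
We reduce to the standard Dwork-type criterion for congruences of quotients, as in Asakura's and Wang's proofs for $\mathscr{F}^{(\sigma)}_{\ua;\ub}(t)$ and $\wF_{a,a;1}(t)$. Write $F(t)=F_{a,a;2a}(t)$, $G(t)=\widehat{G}^{(\sigma)}_{a,a;2a}(t)$, $F'(t)=F_{a',a';(2a)'}(t)$. The claim
$$G(t)\,[F(t)]_{<p^n}\equiv F(t)\,[G(t)]_{<p^n}\pmod{p^nW[[t]]}$$
suffices, because $F(0)=1$ makes $[F]_{<p^n}$ a unit in $W[[t]]$. We expand in blocks of length $p^n$. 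Write $F=\sum_{m\ge0}t^{mp^n}F_m(t)$ and $G=\sum_m t^{mp^n}G_m(t)$ with $\deg F_m,\deg G_m<p^n$. The claim then becomes
$$G_mF_0\equiv F_mG_0 \pmod{p^n}$$
for all $m\ge1$, and we prove this in the sharper form of Dwork's lemma on coefficient ratios. For the numerator coefficients of $F$ we use Dwork's congruences, taken as known from Theorem \ref{theorem:1}:
$$\frac{A(m+kp^{s})}{A'(\lfloor m/p\rfloor+kp^{s-1})}\equiv \frac{A(m)}{A'(\lfloor m/p\rfloor)} \pmod{p^{s}}.$$
Here $A(n)=(a)_n^2/((1)_n(2a)_n)$ and $A'$ is the analogous sequence for the Dwork primes.

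Next we make the coefficients of $G$ explicit. Since $\sigma(t)=ct^p$, the definition gives the coefficient of $t^n$ in $G$ as
$$\frac{A(n)}{n+a}-(-1)^lB_p(a,a)^{-1}c^{a'+k}\frac{A'(k)}{p(k+a')}\,\delta_{n+a=p(k+a')}.$$
Because $p(k+a')=pk+a+l$, the correction occurs exactly when $n\equiv l\pmod p$, which is exactly where $n+a$ can be divisible by $p$. We then pair terms: for $n=pk+l$, we show that
$$B(n):=\frac{A(n)}{n+a}-(-1)^lB_p(a,a)^{-1}c^{(\cdot)}\frac{A'(k)}{n+a}$$
is $p$-integral and satisfies a Dwork-type congruence of the form
$$B(n+mp^s)\,A'(\ldots)\equiv B(\ldots)\,A(\ldots).$$
The integrality is what underlies Lemma \ref{integral}, and we would reuse it. The key input is a Gauss–Legendre/Morita type identity for $\Gamma_p$: $A(pk+l)/A'(k)$ is congruent, up to the factor $(-1)^lB_p(a,a)^{-1}$ coming from $\Gamma_p(a+n)^2/(\Gamma_p(1+n)\Gamma_p(2a+n))$ evaluated at shifts, to $1$ modulo $(pk+l+a)$ in a suitably refined sense. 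We would write
$$\frac{A(n)}{A'(k)}=(-1)^l\frac{\Gamma_p(a+n)^2\,\Gamma_p(2a)}{\Gamma_p(a)^2\,\Gamma_p(1+n)\,\Gamma_p(2a+n)}$$
and expand $\Gamma_p$ around its value at $a$. The numerator $A(n)-(\cdots)A'(k)$ is then a multiple of $(n+a)$ times a $p$-integral quantity, by the Lipschitz property $\Gamma_p(x+y)\equiv\Gamma_p(x)\pmod{y}$, together with the derivative information needed for exact division. The factor $c^{(\cdot)}$ is harmless since $c\in1+pW$ gives $c^{p^s x}\equiv1 \pmod{p^{s+1}}$.

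With these coefficients we run the inductive argument used for $\wF$ by Wang (or for $\mathscr{F}^{(\sigma)}$ by Asakura). We prove by induction on $n$ that
$$G_m F_0\equiv F_m G_0\pmod{p^n}$$
(and the corresponding statement for all blocks $t^{j}$, $j<p^{n}$). The reduction step passes from level $n$ for $(a,2a)$ to level $n-1$ for the primed parameters, through the factorization $A(pk+l)\approx A'(k)\times(\text{unit depending on }l,k\bmod p^{\cdot})$. We must also check that $(a',(2a)')$ again satisfies the two conditions of Section \ref{defFhat}, so that the induction closes; note that $(2a)'$ need not equal $2a'$, and this is why $B_p$ appears. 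We expect the main obstacle to be the $p$-adic divisibility of the corrected coefficients $B(n)$ to the precise order needed. The naive term $A(n)/(n+a)$ loses a factor $p^{v(n+a)}$, and the congruence for the $\Gamma_p$-ratio has to recover exactly that power uniformly in $n$. We would first try to prove
$$\frac{A(n)}{A'(k)}\equiv(-1)^lB_p(a,a)\pmod{(n+a)\Z_p},$$
arguing via the functional equation of $\Gamma_p$ and its local analyticity (Dwork–Morita expansion). We would then propagate this to the higher-level congruences by Dwork's standard lemma.
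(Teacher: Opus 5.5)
There is a genuine gap at the central step. The only analytic statement you actually formulate is $A(n)/A'(k)\equiv(\text{const})\pmod{(n+a)}$. That is just the integrality of Lemma~\ref{integral}, which is the easy part.

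The congruence needs control of the coefficients of $\widehat{G}$ modulo $p^s$ along residue classes. Put $q_k=B_k/A_k$; it lies in $W$ even though $A_k$ is generally not a unit. The key input is the Lipschitz property $q_k\equiv q_{k'}\pmod{p^s}$ for $k\equiv k'\pmod{p^s}$ (Lemma~\ref{Lipschitz}), and the Wang-type argument you want to run rests precisely on it.

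For $k\equiv l\pmod p$ and $c=1$ one has $q_k=\frac{1}{k+a}\bigl(1-(-1)^lB_p(a,a)^{-1}A'_{(k-l)/p}/A_k\bigr)$. So the bracket must be known modulo $p^{s+\operatorname{ord}_p(k+a)}$, which is second-order information. It requires:
\begin{itemize}
\item Lemma~\ref{Beta_cong} modulo $p^{2m}$ rather than $p^m$;
\item the evaluation $B_l/A_l\equiv2\psi_p(a+l)-2\psi_p(1+l)$ of Lemma~\ref{psi_p};
\item $\psi_p$-expansions of ratios such as $\{1+l\}_{bp^m}/\{a+l\}_{bp^m}$;
\item a separate analysis when $\operatorname{ord}_p(a+l)=\operatorname{ord}_p(k-l)$ and $k+a$ has strictly larger valuation.
\end{itemize}

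The propagation mechanism you propose instead, Dwork's lemma with an induction from $(a,2a)$ to the primed parameters, has nothing to act on. The coefficients of $\widehat{G}$ do not factor through those of the primed series: for $j\neq l$, $B_{pk+j}=A_{pk+j}/(pk+j+a)$ bears no relation to the primed $\widehat{G}$. So there is no recursion $B(pk+j)\approx B'(k)\cdot(\text{unit})$.

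Likewise, $c\in1+pW$ makes $c^{(k+a)/p}$ harmless only for integrality. For the congruence you need $\frac{c^{(k+a)/p}-1}{k+a}\equiv\frac{c^{(k'+a)/p}-1}{k'+a}\pmod{p^s}$ whenever $k\equiv k'\pmod{p^s}$ (Lemma~\ref{reduced}).

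Even granting the Lipschitz property, you are missing the step that converts it into the theorem. The statement is equivalent to $S_m=\sum_{i+j=m}(A_{i+p^n}B_j-A_iB_{j+p^n})\equiv0\pmod{p^n}$ for all $m$, because $\sum_mS_mt^m=t^{-p^n}\bigl(F[\widehat{G}]_{<p^n}-[F]_{<p^n}\widehat{G}\bigr)$. Your blockwise condition $G_mF_0\equiv F_mG_0$ is strictly stronger, since the blocks overlap in degree, and you give no way to prove it.

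The paper's route is as follows. It replaces $B_{j+p^n}$ by $A_{j+p^n}q_j$ and groups $j$ by residue class. It then uses Lemma~\ref{keylem}, $\sum_{i+j=m,\ i\equiv k \bmod p^{n-u}}(A_iA_{j+p^n}-A_jA_{i+p^n})\equiv0\pmod{p^{u+1}}$ for $0\le u\le n$. This coarsens the residue classes of $q_j$ from modulo $p^n$ to modulo $p^{n-1}$, and so on, until only $q_0\sum_{i+j=m}(A_{i+p^n}A_j-A_iA_{j+p^n})=0$ remains.

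Minor slips:
\begin{itemize}
\item Condition (ii) means there are no carries in $2a$, so $(2a)'=2a'$. The factor $B_p(a,a)$ enters through $\{1\}_k/\{2a\}_k$ and the reflection formula, not through $(2a)'\neq2a'$.
\item The sign in your $\Gamma_p$-formula for $A(n)/A'(k)$ is $\Gamma_p(1)=-1$, not $(-1)^l$.
\item The relevant constant is $(-1)^lB_p(a,a)^{-1}$, not $(-1)^lB_p(a,a)$.
\end{itemize}
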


A sketch of the proof of Theorem \ref{main:1} is as follows. 
First, we reduce to the case $c=1$ (see Lemma \ref{reduced}). 
Write $F_{a, a; 2a}(t)=\sum_{k=0}^{\infty} A_k t^k$ and $\widehat{G}_{a, a; 2a}(t)=\sum_{k=0}^{\infty} B_k t^k$. 
It suffices to show that for any $m, n \in \Z_{\ge 0}$, 
$$\sum_{i+j=m}A_{i+p^n} B_j - A_i B_{j+p^n} \equiv 0 \pmod{p^n}. $$
To prove this, we employ the method used in \cite{N2}, which is a slight modification of that in \cite{Wang}. 
The key point of this method is to show that if $m_1 \equiv m_2 \pmod{p^n}$, then 
$$\dfrac{B_{m_1}}{A_{m_1}} \equiv \dfrac{B_{m_2}}{A_{m_2}} \pmod{p^n}$$
(see Lemma \ref{Lipschitz}).

The second main result is to give a geometric aspect of our function $\wF_{a, a; 2a}(t)$, which concerns the {\it syntomic regulator map}. 
Let $p$ be a prime such that $p \equiv 1 \pmod{3}$. 
Let $K=\operatorname{Frac}(W)$ be the fractional field of $W$. 
Let $\al \in W^{\times}$ satisfy $\al^3 \not \equiv 1 \pmod{p}$, and $X_{\alpha}$ the Hesse cubic curve over $W$ defined by 
$$x^3_0+y^3_0+z^3_0-3\al x_0y_0z_0=0. $$
Put $X_{\al, K}:=X_{\alpha} \times_W K$. 
In this paper, we study the syntomic regulator map (\cite{Besser, Nekovar_Niziol}) 
$$r_{\rm syn} \colon K_2(X_{\alpha, K})^{(2)} \otimes \Q \to H^2_{\rm syn}(X_{\alpha, K}, \Q_p(2)) \simeq H^1_{\rm dR}(X_{\al, K}/K). $$

Our second main result is to express the syntomic regulator of a certain element of the $K_2$-group of the Hesse cubic curve in terms of the special value of $\wF_{a, a; 2a}(t)$.  
Put $x=x_0/z_0$ and $y=y_0/z_0$.

\begin{thm}[Corollary \ref{main:cor}] \label{Intro_main2}
Suppose that $[F_{\frac13, \frac13; \frac23}(t)]_{<p}|_{t=\alpha^3} \not \equiv 0 \pmod{p}$. 
Let 
$$\xi_{\al}=6\{x, y\} \in K_2(X_{\alpha, K})^{(2)} \otimes \Q$$
be a $K_2$-symbol.  
Let 
$$Q \colon H^1_{\rm dR}(X_{\alpha, K}/K) \otimes H^1_{\rm dR}(X_{\alpha,K}/K) \to H^2_{\rm dR}(X_{\alpha, K}/K) \simeq K$$
denote the cup-product pairing. 
Let $\omega_t=dy/(x^2-ty)$ be a regular $1$-form (see Definition \ref{diff_form}), and let $e_t^{\rm unit}$ denote the unit root vectors explicitly given in Theorem \ref{unit_root_formula}.  
Then, we have 
$$Q(r_{\rm syn}(\xi_{\al}), e_{\alpha}^{\rm unit})=-  2\al \widehat{\mathscr{F}}_{\frac13, \frac13; \frac23}^{(\tau_{\alpha})}(\alpha^3) Q (\omega_{\alpha}, e_{\alpha}^{\rm unit}). $$
\end{thm}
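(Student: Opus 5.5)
The strategy follows the method of Asakura \cite{As} and Wang \cite{Wang}. We work in the family over the disc $\{t\}$ and take the Frobenius lift $\tau_\alpha$ to be $\sigma(t)=ct^p$ with $c$ chosen so that $\sigma(\alpha^3)=(\alpha^3)^{\sigma}$, i.e.\ the Teichm\"uller-compatible lift through the point $t=\alpha^3$. The syntomic regulator of a symbol $\{x,y\}$ is then computed in rigid/overconvergent de Rham cohomology relative to the base. Concretely, on an open $U\subset X$ where $x,y$ are units, $r_{\rm syn}(\{x,y\})$ is represented by the class of a 1-form $\eta$ satisfying
\[
d\eta = \log^{(\sigma)}(x)\,\frac{dy}{y}-\frac{1}{p}\log^{(\sigma)}(y)\,\frac{d x^\sigma}{x^\sigma}+\cdots .
\]
Here $\log^{(\sigma)}(f)=p^{-1}\log(f^p/f^\sigma)$, and the standard explicit formula (Besser, Asakura) expresses $Q(r_{\rm syn}(\xi),e^{\rm unit})$ through a function $\varepsilon(t)$ with $(1-p^{-1}\Phi)$-type correction. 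The factor $6$ in $\xi_\alpha$ and the torsion of the divisors of $x,y$ (supported on flexes) ensure $\xi_\alpha$ extends to $K_2$ of the whole curve, so the class is well defined.

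\textbf{Step 1.} Write the Gauss--Manin system for $\omega_t$ on the Hesse family in the parameter $t=\alpha^3$. Via the unit root formula of Theorem \ref{unit_root_formula}, $\omega_t$ is annihilated by the hypergeometric operator of $F_{\frac13,\frac13;\frac23}(t)$, with solution $t^{1/3}F_{\frac13,\frac13;\frac23}(t)$ up to the normalization $\alpha$. Then $e^{\rm unit}_t$ pairs with $\omega_t$ as $Q(\omega_t,e^{\rm unit}_t)\propto F_{\frac13,\frac13;\frac23}(t)^{-1}$ times a Frobenius-invariant factor. The hypothesis $[F]_{<p}(\alpha^3)\not\equiv0$ is exactly what makes $e^{\rm unit}$ evaluable at $t=\alpha^3$, via Theorem \ref{main:1} with $n=1$.

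\textbf{Step 2.} Compute $\nabla_{d/dt}$ of the regulator class. The key identity is that $d\log x\wedge d\log y$, viewed in the Gauss--Manin connection, gives
\[
\nabla\bigl(r(\{x,y\})\bigr)=\text{(const)}\,\frac{\omega_t}{t}\,dt
\]
(an exact-sequence/extension computation as in Asakura's elliptic case). In the syntomic version the Frobenius correction contributes $-\frac{1}{p}\Phi(\cdot)$, and this produces precisely the term $(-1)^lB_p(a,a)^{-1}[t^{a'}F_{a',a';(2a)'}]^{\sigma}$ with $a=1/3$. The Dwork prime here is $a'=1/3$ since $p\equiv1\pmod 3$, and the $p$-adic Beta factor arises from the Frobenius matrix entry on $\omega$, obtained by comparing with Dwork's unit root $F(t)/F(t^\sigma)$ via the Gross--Koblitz-type evaluation. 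Integrating $\int_0^t(\cdot)\,dt/t$ then yields exactly $\widehat G^{(\tau_\alpha)}_{\frac13,\frac13;\frac23}$. The integration constant is fixed by the behaviour at $t=0$, where the symbol degenerates on the N\'eron $3$-gon and the regulator vanishes.

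\textbf{Step 3.} Pair with $e^{\rm unit}$ and divide by $Q(\omega_\alpha,e^{\rm unit}_\alpha)$. This gives $-2\alpha\,\widehat G/F=-2\alpha\,\widehat{\mathscr F}$, evaluated at $\alpha^3$ by the congruences of Theorem \ref{main:1}, which make the special value meaningful. The constant $-2\alpha$ comes from the factor $6$, the $t^{1/3}$ normalization, and the residue computations.

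The main obstacle is Step 2: identifying the Frobenius-twisted term with $B_p(\frac13,\frac13)^{-1}$ and the sign $(-1)^l$. I would handle it by computing the Frobenius on $H^1_{\rm dR}$ near $t=0$ (log-crystal at the $3$-gon) and matching constant terms of $\Phi(\omega)$ against the Dwork congruence. Here I am not fully certain of the exact form of the constant.
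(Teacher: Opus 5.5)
Your skeleton matches the paper's. By Theorem~\ref{syntomic_reg} the regulator is the specialization at $t=\alpha$ of $e_\xi-\Phi_{\sigma_\alpha}(e_\xi)$; one then differentiates, uses the Frobenius on $\widetilde{\omega}_t$, integrates, and pairs with $e^{\rm unit}_\alpha$, and the undetermined $\widetilde{\eta}$-component drops out because $Q(e^{\rm unit}_\alpha,e^{\rm unit}_\alpha)=0$.

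\textbf{The gap.} The two steps that carry real content rest on a wrong picture of the family. The fiber at $t=0$ is the smooth Fermat cubic $x^3+y^3+1=0$; the N\'eron $3$-gons lie over $t\in\{1,\zeta,\zeta^2,\infty\}$.

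(i) The Frobenius constant. The analogous computation at a $3$-gon is the expansion in $s=t^{-1}$ of Section~\ref{main:transform}. There $\Phi(\widetilde{\omega}'_s)\equiv p\,\widetilde{\omega}'_s$ modulo $\widetilde{\eta}'_s$, with no Beta factor, and one gets the logarithmic function $\mathscr{F}^{(\tau)}_{\frac13,\frac23;1}$, not $\widehat{\mathscr{F}}$. The missing input is Proposition~\ref{Frobenius}. From $\Phi\nabla=\nabla\Phi$ and \eqref{GM2}, $\Phi(\widetilde{\eta}_t)\in K(\widetilde{\omega}_t+g\widetilde{\eta}_t)+K\widetilde{\eta}_t$, and $\Phi(\widetilde{\omega}_t)\equiv f_1\widetilde{\omega}_t \pmod{K[[t]]\widetilde{\eta}_t}$ with $f_1$ constant. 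Both constants are then read off at the Fermat fiber $t=0$ from Coleman's Frobenius matrices, using $p\equiv 1 \pmod 3$. This gives $\Phi(\widetilde{\eta}_t)\in K\widetilde{\eta}_t$ and $f_1=pB_p(\frac13,\frac13)^{-1}$. The sign is harmless because $l=(p-1)/3$ is even.

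(ii) The integration constant. Your justification (``the symbol degenerates on the $3$-gon at $t=0$'') is false, so it fixes nothing. The paper writes $-\widehat{E}_1(t)=C+2t\widehat{G}^{(\tau)}_{\frac13,\frac13;\frac23}(t^3)$ and kills $C$ by a convergence argument:
\begin{itemize}
\item $\widehat{E}_1(t)/F_{\frac13,\frac13;\frac23}(t^3)$ is Krasner-analytic;
\item so is $\widehat{\mathscr{F}}^{(\tau)}_{\frac13,\frac13;\frac23}(t^3)$, by Theorem~\ref{main:1};
\item hence $C\neq 0$ would make $1/F_{\frac13,\frac13;\frac23}(t^3)$ convergent, contradicting Lemma~\ref{HGconvergent}.
\end{itemize}
The vanishing $\widehat{E}_1(0)=\widehat{\varepsilon}_1(0)=0$ can also be derived from the automorphism $x\mapsto\zeta x$ of the Fermat fiber: it fixes $6\{x,y\}$, while $H^1_{\rm dR}$ of that fiber has no invariants. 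That is a different argument from the one you gave.

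\textbf{A computational error in Step 2,} reflecting the same confusion. Differentiating the equation gives $(x^2-ty)\,dx+(y^2-tx)\,dy=xy\,dt$. Hence $d\log\{x,y\}=dt\wedge\omega_t$ and $\nabla(e_\xi)=-6\,dt\otimes\omega_t=-6\,\frac{dt}{t}\otimes t\omega_t$, not $(\mathrm{const})\,\frac{dt}{t}\otimes\omega_t$. The extra factor $t$ is exactly what turns $6\int_0^t tF_{\frac13,\frac13;\frac23}(t^3)\frac{dt}{t}$ into $2\int_0^{z}z^{1/3}F_{\frac13,\frac13;\frac23}(z)\frac{dz}{z}$ with $z=t^3$. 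That is $\widehat{G}$ with its $t^a$-twist, and the Frobenius $\sigma_\alpha(t)=F(\alpha)\alpha^{-p}t^p$ becomes $\tau_\alpha$ on $z$. Your form has no $t^a$-twist; it is the shape $\frac{ds}{s}\otimes\omega'_s$ valid in the coordinate $s=t^{-1}$, where it leads to the logarithmic-type function.

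\textbf{Minor points.}
\begin{itemize}
\item $Q(\omega_t,e^{\rm unit}_t)=Q(\widetilde{\omega}_t,\widetilde{\eta}_t)$ is a nonzero constant, not proportional to $F^{-1}$.
\item Evaluability of $e^{\rm unit}$ at $t=\alpha$ comes from Dwork's congruence (Theorem~\ref{theorem:1}). Theorem~\ref{main:1} is what makes $\widehat{\mathscr{F}}^{(\tau_\alpha)}_{\frac13,\frac13;\frac23}(\alpha^3)$ meaningful.
\end{itemize}
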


Over the complex numbers, the author \cite{N} proves that the regulator of the Hesse cubic curve $r_{\mathscr{D}, \Q}(\xi_{\al})$ is expressed in terms of generalized hypergeometric functions. 
More precisely, we have 
\begin{align} \label{complex_regulator}
\begin{split}
&\dfrac1{2 \pi i} r_{\mathscr{D}, \Q}(\xi_{\al})(\gamma) \\
& \qquad = \dfrac{3\sqrt{3}}{\pi} \left( B\left(\frac13, \frac13 \right)
\al {_{3}F_{2}}\left( 
\begin{matrix}
\frac13, \frac13, \frac13 \\
\frac43, \frac23
\end{matrix}
; \al^3
\right) 
+ \frac12 \al^2B\left(\frac23, \frac23 \right){_{3}F_{2}}\left( 
\begin{matrix}
\frac23, \frac23, \frac23 \\
\frac53, \frac43
\end{matrix}
; \al^3
\right) 
\right), 
\end{split}
\end{align}
where $B(x, y)$ denotes the Beta function\footnote{This formula is also proved using the Mahler measure (see \cite{Deninger, Rod}). }.
Thus, Theorem \ref{Intro_main2} is a $p$-adic counterpart of the formula \eqref{complex_regulator}.

Our third main result concerns a transformation formula between $p$-adic hypergeometric functions. 
In previous work, Wang \cite{Wang} conjectures that there is a transformation formula between Dwork's $p$-adic hypergeometric functions $\mathscr{F}^{\rm Dw}_{a, \ldots, a; \underline{1}}(t)$ and $\mathscr{F}^{\rm Dw}_{a, \ldots, a; \underline{1}}(t^{-1})$. Under some assumptions, Wang \cite{Wang, Wang2} proves 
the conjecture for $s=1, 2$, and the author \cite{N3} proves it for general $s \ge 2$. 
Wang also conjectures that there is a transformation formula relating $\mathscr{F}_{a, \ldots,a; \underline{1}}^{(\sigma)}(t)$, which is the $p$-adic hypergeometric function of logarithmic type defined by Asakura, to the \(p\)-adic hypergeometric function $\wF_{a, \ldots, a; \underline{1}}(t^{-1})$ introduced by Wang.  
Wang proves that the conjecture is true for $s=2$ in \cite{Wang} and for general $s \ge 1$ in \cite{Wang2} under some assumptions.

In this paper, we consider a transformation formula between $\mathscr{F}_{a, 1-a;1}^{(\sigma)}(t)$ and  $\wF_{a, a; 2a}(t^{-1})$. 
For a polynomial $h(t) \in W[t]$, we put 
$$W\langle t, t^{-1},  h(t)^{-1}\rangle := \varprojlim_{n \ge 1} (W/p^nW[t, t^{-1}, h(t)^{-1}]). $$
Let $r$ be a positive integer such that $a^{(r)}=a$. 
Define polynomials by 
$$h_a(t)=\prod_{i=0}^{r-1}[F^{(i)}_{a,1-a;1}(t)]_{<p}, \quad \widehat{h}_a(t)=\prod_{i=0}^{r-1}[F^{(i)}_{a,a;2a}(t)]_{<p}. $$
Then, there is an isomorphism (see Proposition \ref{involution}) 
$$\omega \colon W \langle t, t^{-1}, \widehat{h}_a(t)^{-1} \rangle \to W \langle t, t^{-1}, h_a(t)^{-1} \rangle; \quad \omega(f(t))=f(t^{-1}). $$
We propose the following conjecture.

\begin{conj} \label{Intro_conj}
Let $\sigma(t)=ct^p$ and $\widehat{\sigma}(t)=c^{-1}t^p$ with $c \in 1+ pW$.   
Then we have  
\begin{align*}
\mathscr{F}^{(\sigma)}_{a, 1-a; 1}(t) = - \widehat{\mathscr{F}}^{(\widehat{\sigma})}_{a, a; 2a}(t^{-1})
\end{align*}
in the ring $W\langle t, t^{-1}, h_a(t)^{-1} \rangle$, where $\widehat{\mathscr{F}}^{(\widehat{\sigma})}_{a, a; 2a}(t^{-1}):=\omega(\widehat{\mathscr{F}}^{(\widehat{\sigma})}_{a, a; 2a}(t))$.  
\end{conj}

We prove that this conjecture holds modulo $p$ (see Theorem \ref{conj_mod_p}). 
We also prove the conjecture for $a=1/3$ by comparing the syntomic regulators of the Hesse cubic curves (see Theorem \ref{main:3}).

Over the complex numbers, Zudilin's formula \cite[Lemma 3.5]{As4} gives the following transformation formula: 
\begin{align} \label{trans_overC}
\begin{split}
&\pi i + 2 \psi(1) -\psi(a) -\psi (1-a) -\log t -a(1-a)t {_{4}F_{3}}\left( 
\begin{matrix}
a+1, 2-a, 1, 1 \\
2, 2, 2
\end{matrix}
; t
\right)
\\
& = \dfrac{\tan(\pi a)}{ 2 \pi a}  B(a, a) (-t^{-1})^a 
{_{3}F_{2}}\left( 
\begin{matrix}
a, a, a \\
2a, a+1
\end{matrix}
; t^{-1}
\right) \\
& \quad +
\dfrac{\tan(\pi (1-a))}{ 2 \pi (1-a)}  B(1-a, 1-a) (-t^{-1})^{1-a} 
{_{3}F_{2}}\left( 
\begin{matrix}
1-a, 1-a, 1-a \\
2-2a, 2-a
\end{matrix}
; t^{-1}
\right), 
\end{split}
\end{align}
where $\psi(x)=\Gamma'(x)/\Gamma(x)$ denotes the digamma function.  
The $p$-adic hypergeometric function of logarithmic type $\mathscr{F}^{(\sigma)}_{a, 1-a; 1}(t)$ is a $p$-adic analogue of the expression on the left-hand side of \eqref{trans_overC} (see \eqref{Asakura_function}). 
Hence, Conjecture \ref{Intro_conj} is a $p$-adic counterpart of \eqref{trans_overC}.

This paper is constructed as follows. 
In Section \ref{Dwork_pHG}, we recall Dwork's $p$-adic hypergeometric function and its congruence relations. 
In Section \ref{several_pHG}, we recall several $p$-adic hypergeometric functions, which are defined by Asakura and Wang. 
In Section \ref{definition}, we give a definition of a new type of $p$-adic hypergeometric function $\wF_{a, a;2a}(t)$.  
In Section \ref{main_proof}, we give a proof of Theorem \ref{main:1}, i.e., the congruence relations for $\wF_{a, a;2a}(t)$. 
In Section \ref{syntomic_reg_Hesse}, we show that our new $p$-adic hypergeometric function appears in formulas for the syntomic regulators of the Hesse cubic curves. 
In Section \ref{main:transform}, we introduce a conjectural transformation formula relating $\mathscr{F}^{(\sigma)}_{a,1-a;1}(t)$ to $\widehat{\mathscr{F}}^{(\widehat{\sigma})}_{a,a;2a}(t^{-1})$, and prove it for $a=1/3$.

\section{Dwork's $p$-adic hypergeometric function} \label{Dwork_pHG}
In this section, we recall $p$-adic hypergeometric functions defined by Dwork in \cite{Dw, Dw2}.

For $\alpha \in \Z_p$, let $\alpha'$ be the Dwork prime of $\al$ defined by $\alpha'=(\alpha+l)/p$, where $l \in \{0, 1, \ldots, p-1\}$ is the unique integer such that $\alpha+l \equiv 0 \pmod{p}$. To put it another way, if $p$-adic integer $\alpha$ has the $p$-adic expansion 
$$\alpha=-[\alpha]_0-[\alpha]_1p-[\alpha]_2p^2-\cdots -[\alpha]_np^n-\cdots$$
where $[\alpha]_i \in \{0,1, \cdots, p-1\}$, then $\alpha^{\prime}$ is defined by
$$\alpha^{\prime}=-[\alpha]_1-[\alpha]_2p-[\alpha]_3p^2-\cdots -[\alpha]_{n+1}p^n-\cdots.$$
For $i \ge 0$, let $a^{(i)}$ denote the $i$th Dwork prime defined by $a^{(i)}=(a^{(i-1)})^{\prime}$ with $a^{(0)}=a$. 
Put 
$$F_{\ua; \ub}(t)={_{s}F_{s-1}}\left( 
\begin{matrix}
a_1, \cdots, a_s \\
b_1, \ldots b_{s-1}
\end{matrix}
; t
\right), \quad F^{(i)}_{\ua; \ub}(t)={_{s}F_{s-1}}\left( 
\begin{matrix}
a_1^{(i)}, \cdots, a_s^{(i)} \\
b_1^{(i)}, \ldots b_{s-1}^{(i)}
\end{matrix}
; t
\right). $$
Then Dwork \cite{Dw2} proves the following theorem.

\begin{thm}[{\cite[Theorem 1.1]{Dw2}}, cf. {\cite[Theorem 2.3]{Young}}]\label{theorem:1}
Let $\ua=(a_1, \ldots, a_s) \in (\mathbb{Q} \cap \mathbb{Z}_p)^s$ and $\ub=(b_1, \ldots, b_{s-1}) \in (\mathbb{Q} \cap \mathbb{Z}_p\backslash \mathbb{Z}_{\leq 0})^{s-1}$. 
Suppose that, for some $q$ with $0\le q\le s-1$,
\[
b_j\ne1\quad(1\le j\le q),
\qquad
b_j=1\quad(q<j\le s-1),
\]
and that the following conditions hold: 
\begin{enumerate}
\item $|b^{(i)}_j|_p=1$ for all $i \geq 0$, $j=1,\ldots, q$,  
\item For each fixed $n$, after rearranging the indices so that $[a_{1}]_n \leq \cdots \leq [a_{s}]_n$ and $[b_1]_n \leq \cdots \leq [b_{s-1}]_n$, 
we have 
$$  [a_{j+1}]_n < [b_j]_n \quad (j=1, \ldots, q).$$
\end{enumerate}
\noindent Then for any $i \geq 0$, we have $\displaystyle F^{(i)}_{\ua; \ub}(t) \in \mathbb{Z}_p[[t]]$ and the following congruence holds: 
$$F_{\ua; \ub}(t) [F^{(1)}_{\ua; \ub}(t^p)]_{<p^n} \equiv F^{(1)}_{\ua; \ub}(t^p) [F_{\ua; \ub}(t)]_{<p^n}    \pmod{p^{l} \mathbb{Z}_p[[t]]} \quad  (n \geq l \ge 1). $$
\end{thm}

Dwork defines the $p$-adic hypergeometric function as the ratio of hypergeometric series  
\begin{align} \label{Dwork_congruence}
\mathscr{F}_{\ua; \ub}^{\rm Dw}(t)=\dfrac{F_{\ua; \ub}(t)}{F^{(1)}_{\ua; \ub}(t^p)}. 
\end{align}
By Theorem \ref{theorem:1}, we have the congruence relations
\begin{align*}
\mathscr{F}_{\ua; \ub}^{\rm Dw}(t) \equiv \dfrac{[F_{\ua; \ub}(t)]_{<p^n}}{[F^{(1)}_{\ua; \ub}(t^p)]_{<p^n}} \pmod{p^n\mathbb{Z}_p[[t]]}. 
\end{align*}
Therefore, this function is $p$-adically analytic in the sense of Krasner, which means that this function defines an element of the Tate algebra.

\section{Several $p$-adic hypergeometric functions} \label{several_pHG}
\subsection{$p$-adic hypergeometric function of logarithmic type} \label{p_adic_logHGF}
In this subsection, we briefly recall a $p$-adic hypergeometric function of logarithmic type, which is defined by Asakura \cite{As} for $\ub=\underline{1}$, and by the author \cite{N2} for a general $\ub$. 
These functions satisfy congruence relations similar to Dwork's. 
In this paper, we only recall the case $\ub=\underline{1}$. 
\begin{dfn} \label{definition:3}
For $z \in \mathbb{Z}_p$, we define 
$$\widetilde{\psi}_p(z) = \displaystyle \lim_{n \in \mathbb{Z}_{>0}, n \rightarrow z} \sum _{1 \leq k <n, p \nmid  k}\dfrac{1}{k}.$$
Define the \textit{$p$-adic Euler constant} by 
$$\gamma_p=-\displaystyle \lim_{s \to \infty}\dfrac{1}{p^s} \sum_{0 \leq j<p^s, p \nmid j }\log(j),$$
where $\log \colon \C^{\times}_p \to \C_p$ is the \textit{Iwasawa logarithmic function}, which is characterized by a continuous homomorphism satisfying $\log(p)=0$ and 
$$\log(z)=-\displaystyle \sum_{n=1}^{\infty} \dfrac{(1-z)^n}{n}, \quad |z-1|_p<1.$$ 
We define the \textit{$p$-adic digamma function} by
$$\psi_p(z)=-\gamma_p +\widetilde{\psi}_p(z), $$
which is a $p$-adic continuous function on $\mathbb{Z}_p$ (see \cite[Lemma 2.5 and (2.13)]{As}).
\end{dfn}

Let $\sigma$ be a $p$th Frobenius on $W[[t]]$ defined by $\sigma(t)=ct^p$ ($c \in 1+pW$), i.e.
$$\left(\sum_i a_i t^i \right)^{\sigma}=\sum_i a_i^Fc^it^{ip}, $$
where $F$ is the Frobenius on $W$.  
For $\ua \in \Z_p^s$, put 
$$G_{\ua; \underline{1}}(t)= \sum_{i=1}^{s}\psi_p(a_i)+ s\gamma_p-p^{-1} \log(c) +\int_0^t (F_{\ua;\underline{1}}(t)-F^{(1)}_{\ua; \underline{1}}(t^{\sigma}))\dfrac{dt}{t}, $$
where $\int_0^t(-)\frac{dt}{t}$ is an operator which sends a power series $\sum_{n \ge 1} a_n t^n$ to $\sum_{n \ge 1} \frac{a_n}{n} t^n$. 
Then Asakura \cite{As} defines the \textit{$p$-adic hypergeometric function of logarithmic type} by 
\begin{align} \label{Asakura_function}
\mathscr{F}_{\ua; \underline{1}}^{(\sigma)}(t)=\dfrac{G_{\ua; \underline{1}}(t)}{F_{\ua; \underline{1}}(t)} \in W[[t]]
\end{align}
and proves the following congruence relations similar to Dwork's \eqref{Dwork_congruence}.     
\begin{thm}[{\cite[Theorem 3.3]{As}}]
Suppose that $a_i \not \in \mathbb{Z}_{\leq 0}$ for all $i$. If $p$ is odd, then for all $n \geq 1$, we have
\begin{align*}
\mathscr{F}_{\ua; \underline{1}}^{(\sigma)}(t) \equiv \dfrac{[G_{\ua; \underline{1}}(t)]_{<p^n}}{[F_{\ua; \underline{1}}(t)]_{<p^n}} \pmod{p^n{W}[[t]]}.   
\end{align*}
If $p=2$, then the congruence above holds modulo $p^{n-1}W[[t]]$.
\end{thm}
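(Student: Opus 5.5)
The statement just above is Asakura's congruence \cite[Theorem 3.3]{As} for $\mathscr{F}^{(\sigma)}_{\ua;\underline{1}}(t)$. I would prove it with the same Dwork-type strategy sketched in the introduction for Theorem \ref{main:1}.

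\textbf{Step 1: reduce to $c=1$.} Write $\sigma_c(t)=ct^p$. Then
$$F^{(1)}_{\ua;\underline{1}}(t^{\sigma_c})-F^{(1)}_{\ua;\underline{1}}(t^p)=\sum_k A^{(1)}_k (c^k-1)t^{pk}.$$
Integrating this term by term against $dt/t$ gives $\sum_k A^{(1)}_k\frac{c^k-1}{pk}t^{pk}$. Combined with the constant $-p^{-1}\log(c)$, this is $p^{-1}$ times a series of the form $\sum A^{(1)}_k\bigl(\frac{c^k-1}{k}-\log c\bigr)t^{pk}$. The goal of this step is to rewrite the correction as $F^{(1)}(t^p)\cdot p^{-1}\log\bigl(F^{(1)}(t^{\sigma_c})/F^{(1)}(t^p)\bigr)$-type terms. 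More simply, the goal is to show that $G^{(\sigma_c)}-G^{(\sigma_1)}$ equals $F_{\ua;\underline{1}}(t)$ times an element $h\in W[[t]]$ that itself satisfies the truncation congruences. For this I would use Dwork's congruence (Theorem \ref{theorem:1}) for $F/F^{(1)}(t^p)$, together with $\frac{c^k-1}{k}\equiv\log c \pmod{p^{v_p(k)+1}}$ adjusted appropriately. Once this holds, the $c$-dependence drops out and it suffices to treat $\sigma(t)=t^p$.

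\textbf{Step 2: reformulate as coefficient congruences.} Write $F=\sum A_kt^k$ and $G=\sum B_kt^k$ with $A_0=1$. The congruence $G/F\equiv[G]_{<p^n}/[F]_{<p^n}$ is equivalent to $G[F]_{<p^n}-F[G]_{<p^n}\equiv0\pmod{p^n}$. In coefficients this reads
$$\sum_{i+j=m}\bigl(A_{i+p^n}B_j-A_iB_{j+p^n}\bigr)\equiv0\pmod{p^n}\quad(m\ge0).$$
To prove it, I would follow Dwork's method: pair indices via $i=i_0+pi_1$ with $0\le i_0<p$, and induct on $n$ using the Dwork prime relation $A_{i_0+pi_1}/A^{(1)}_{i_1}\equiv A_{i_0}/A^{(1)}_0$ modulo suitable powers of $p$. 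This is the Dwork/Lang \emph{Dwork lemma} structure. Concretely, it suffices to establish a Lipschitz-type statement: $m_1\equiv m_2\pmod{p^n}$ implies $B_{m_1}/A_{m_1}\equiv B_{m_2}/A_{m_2}\pmod{p^n}$, whenever the ratio makes sense. Here $B_m/A_m$ must be computed explicitly. Since $F'$s coefficients are $\prod(a_i)_m/m!^s$, the constant term and the integral give
$$B_m=A_m\Bigl(\sum_i\psi_p(a_i)+s\gamma_p+\sum_i\sum_{k<m}\tfrac{1}{a_i+k}-s\sum_{k\le m}\tfrac1k\Bigr)$$
up to the Frobenius correction term $A^{(1)}_{m/p}/m$ when $p\mid m$. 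This is the analogue of the complex digamma expression. The ratio $B_m/A_m$ is then a sum of terms of the shape $\widetilde\psi_p(a_i+m)$ and $\widetilde\psi_p(m+1)$, after the $p$-divisible denominators are absorbed by the $F^{(1)}(t^p)$ subtraction.

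\textbf{Step 3 (main obstacle): the $p$-divisible denominators.} One must show that the terms $1/(a_i+k)$ and $1/k$ with $p\mid(a_i+k)$ or $p\mid k$ exactly cancel against the Frobenius term $A^{(1)}_{m/p}/m$ and the reindexing $a_i\mapsto a_i'$. After that cancellation, only the truncated sums of $1/k$ over $p\nmid k$ remain, and these are $p$-adically Lipschitz with constant $1$ by the definition of $\widetilde\psi_p$: sums over a full period of $p^n$ of $1/k$ with $p\nmid k$ vanish mod $p^n$ when $p$ is odd, and only mod $p^{n-1}$ when $p=2$, which explains the weaker $p=2$ statement. I expect this bookkeeping to be hardest. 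I would first handle it by induction on $n$, using the identity $\psi_p(a')$-relation $\psi_p(a)=\psi_p(a')+\ldots$ from \cite[Lemma 2.5]{As}. Once Step 3 gives the Lipschitz property, the combinatorial Dwork argument of Step 2 closes the proof.
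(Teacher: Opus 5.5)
Your overall scheme is the right one: reduce to $c=1$, show that $q_m:=B_m/A_m$ is $p$-adically Lipschitz in $m$, then finish with Dwork's combinatorial congruence. The paper only cites this result, but it proves its own Theorem \ref{main:1} by exactly this scheme (Lemmas \ref{reduced}, \ref{Lipschitz}, \ref{keylem}), following Asakura. The problem is that your Steps 2--3 rest on a wrong formula for the coefficients of $G_{\ua;\underline{1}}(t)$. The term $\sum_i\psi_p(a_i)+s\gamma_p-p^{-1}\log(c)$ is an additive constant, not a multiple of $F$, and $\int_0^t(-)\frac{dt}{t}$ simply divides the $m$th coefficient by $m$. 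Hence
\[
B_0=\sum_{i}\psi_p(a_i)+s\gamma_p-p^{-1}\log(c),\qquad B_m=\frac1m\left(A_m-c^{m/p}A^{(1)}_{m/p}\right)\quad(m\ge1),
\]
with $A^{(1)}_{m/p}:=0$ if $p\nmid m$. There are no sums $\sum_{k<m}1/(a_i+k)$ in $B_m$, and nothing to cancel against the Frobenius term. For $p\nmid m$ one simply has $q_m=1/m$, which is trivially Lipschitz. So your Step 3 attacks an obstacle that does not exist, while the actual one is not addressed.

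The real work is at $m\equiv 0\pmod p$. For $c=1$ one has $A_m/A^{(1)}_{m/p}=\prod_i\{a_i\}_m/\{1\}_m$, so $q_m=\frac1m\left(1-\prod_i\{1\}_m/\{a_i\}_m\right)$. Because of the factor $1/m$, proving $q_m\equiv q_{m+p^n}\pmod{p^n}$ requires the numerator modulo $p^{v_p(m)+n}$. That means second-order congruences such as $\{a\}_{bp^v}/\{1\}_{bp^v}\equiv 1+bp^v(\psi_p(a)-\psi_p(1))\pmod{p^{2v}}$ and their shifted versions. It also needs a case analysis according to whether $v_p(m)$ and $v_p(m+p^n)$ differ (compare Cases I--III in the proof of Lemma \ref{Lipschitz}). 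This is the only place $\psi_p$ and $\gamma_p$ enter. Since $\psi_p(1)=-\gamma_p$, the constant $B_0$ is exactly what makes $q_{bp^v}\equiv q_0\pmod{p^v}$. For $c\neq 1$, the term $-p^{-1}\log(c)$ matches $c^{bp^{v-1}}\equiv 1+bp^{v-1}\log(c)\pmod{p^{2v}}$. None of this is in your proposal.

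Two smaller points. In Step 1 the correction is actually
\[
-p^{-1}\log(c)\,F^{(1)}(t^p)-p^{-1}\sum_{k\ge1}A^{(1)}_k\left(\frac{c^k-1}{k}-\log(c)\right)t^{pk}.
\]
The first piece is handled by Theorem \ref{theorem:1}. For the second, a valuation bound is not enough; you need the Lipschitz property of $m\mapsto (c^{m/p}-1)/m$, as in Lemma \ref{reduced}. Finally, passing from the Lipschitz property to the theorem needs the analogue of Lemma \ref{keylem}: sums restricted to $j\equiv k\pmod{p^{n-l}}$ vanish modulo $p^{l+1}$. You then telescope over $l$ as in the proof of Theorem \ref{main:1}; Theorem \ref{theorem:1} alone does not suffice.
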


Suppose that there exists an integer $r >0$ such that $\ua^{(r)}=\ua$. 
Then the theorem implies that $\mathscr{F}^{(\sigma)}_{\ua; \underline{1}}(t)$ is a $p$-adic analytic function in the sense of Krasner, i.e. we have 
$$\mathscr{F}^{(\sigma)}_{\ua; \underline{1}}(t) \in W\langle t, h_{\ua; \underline{1}}(t)^{-1} \rangle:= \varprojlim_{n \geq 1}(W/p^nW[t, h_{\ua; \underline{1}}(t)^{-1}]), \quad h_{\ua; \underline{1}}(t)=\prod_{i=0}^{r-1}[F^{(i)}_{\ua; \underline{1}}(t)]_{<p}$$
(see \cite[Corollary 3.4]{As}).

\subsection{$p$-adic hypergeometric function $\wF_{a, \ldots, a; \underline{1}}(t)$} \label{Wang_pHGF} 
In this subsection, we recall a new kind of $p$-adic hypergeometric function, which is defined by Wang \cite{Wang}. 
Put 
$$q:=
\begin{cases}
4 & p=2,  \\
p & p \ge 3. 
\end{cases}
$$
Let $l' \in \{0, \ldots, q-1\}$ be the unique integer such that $a + l' \equiv 0 \pmod{q}$. 
Put 
$$e:= l' - \lfloor \dfrac{l'}p \rfloor. $$ 
Let $a_1 = \cdots = a_s=a$. Then Wang defines a new type of $p$-adic hypergeometric function $\wF_{a, \ldots, a; \underline{1}}(t)$ by 
\begin{align*}
\wF_{a, \ldots, a; \underline{1}}(t) = \dfrac{t^{-a}}{ F_{a, \ldots, a; \underline{1}}(t)} \int_0^t (t^a F_{a, \ldots, a; \underline{1}}(t) -(-1)^{se} [t^{a'}F_{a', \ldots, a'; \underline{1}}(t)]^{\sigma}) \dfrac{dt}{t} \in W[[t]].  
\end{align*}
Write $\wF_{a, \ldots, a; \underline{1}}(t):=\widehat{G}^{(\sigma)}_{a, \ldots, a; \underline{1}}/F_{a, \ldots, a; \underline{1}}(t)$. 
Then Wang proves the following congruence relations.

\begin{thm}[{\cite[Theorem 3.1]{Wang}}]
Let $a \in \Z_p \setminus \Z_{\le 0}$ and suppose $c \in 1 + qW$. Then, 
$$\wF_{a, \ldots, a; \underline{1}}(t) \equiv \dfrac{[\widehat{G}^{(\sigma)}_{a, \ldots, a; \underline{1}}(t)]_{<p^n}}{[F_{a, \ldots, a; \underline{1}}(t)]_{<p^n}} \pmod{p^nW[[t]]}$$
for all $n \in \Z_{\ge 0}$. 
\end{thm}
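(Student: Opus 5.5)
The plan is to follow the strategy sketched in the introduction for Theorem \ref{main:1}, which is modelled on this very result. First I reduce to $c=1$. Write $\sigma_c$ for the Frobenius with $\sigma_c(t)=ct^p$. Then
$$[t^{a'}F_{a',\ldots,a';\underline{1}}(t)]^{\sigma_c}=c^{a'}\,[t^{a'}F_{a',\ldots,a';\underline{1}}(t)]^{\sigma_1}\big|_{t\mapsto c^{1/p}t},$$
where $c^{a'}$ and $c^{1/p}$ make sense because $c\in 1+qW$. The aim is to show that $\widehat G^{(\sigma_c)}$ differs from $\widehat G^{(\sigma_1)}$ by a term of the form $(c^{a'}-1)$ times a series already known to satisfy a Dwork-type congruence. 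Concretely, I would try to rewrite the difference as a combination of $F_{a,\ldots,a;\underline{1}}(t)$ and Dwork's ratio $F/F^{(1)}(t^p)$, and then invoke Theorem \ref{theorem:1} (with $\ub=\underline{1}$) for that part.

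With $c=1$, write $F_{a,\ldots,a;\underline{1}}(t)=\sum_k A_kt^k$ and $\widehat G(t)=\sum_k B_kt^k$. Integrating term by term gives an explicit formula:
$$B_k=\frac{A_k}{k+a}-\delta_k\,(-1)^{se}\frac{A'_{j}}{p(j+a')}, \qquad k+a=p(j+a'),$$
where $\delta_k=1$ exactly when $k\equiv l\pmod p$ and $A'_j$ are the coefficients of $F_{a',\ldots,a';\underline{1}}$. The subtraction is designed so that the pole at $p\mid k+a$ cancels. Put $F(t)=\sum_k A_kt^k$ and $G(t)=\widehat G(t)$; $W[[t]]$-integrality I would take as known. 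Write $F=F_{<p^n}+t^{p^n}F^{+}$ and similarly for $G$. Then
$$\frac{G}{F}-\frac{G_{<p^n}}{F_{<p^n}}=t^{p^n}\,\frac{G^{+}F_{<p^n}-F^{+}G_{<p^n}}{F\,F_{<p^n}},$$
and since $A_0=1$, the denominator $F\,F_{<p^n}$ is a unit in $W[[t]]$. So the theorem is equivalent to the coefficient congruences
$$\sum_{i+j=m}\bigl(A_{i+p^n}B_j-A_iB_{j+p^n}\bigr)\equiv 0 \pmod{p^n}\qquad (m,n\ge 0).$$

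The key input is the Lipschitz-type statement: if $m_1\equiv m_2\pmod{p^n}$, then $B_{m_1}/A_{m_1}\equiv B_{m_2}/A_{m_2}\pmod{p^n}$. I would prove it by writing $m=l+pj$ and using Dwork's factorization
$$A_{l+pj}=A_l\,A'_j\cdot\bigl(\text{a }p\text{-adic unit close to }1\bigr),$$
the unit being a ratio of products of $\Gamma_p$-values. This gives
$$\frac{B_{l+pj}}{A_{l+pj}}=\frac{1}{p(j+a')}\Bigl(1-(-1)^{se}\frac{A'_j}{A_{l+pj}}\Bigr).$$
The ratio $A'_j/A_{l+pj}$ is, up to the sign $(-1)^{se}$, a Morita $\Gamma_p$-expression that is $\equiv 1$ modulo $(j+a')p$ to the required precision. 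Its Taylor expansion in $j$ then yields both integrality and the Lipschitz bound, with induction on $n$ via Dwork primes. For $m\not\equiv l\pmod p$ one simply has $B_m/A_m=1/(m+a)$, which is Lipschitz automatically.

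Granting the Lipschitz lemma, the sum is handled as in Dwork's proof of Theorem \ref{theorem:1}. Write $B_j=A_j\beta_j$ with $\beta_{j+p^n}\equiv\beta_j\pmod{p^n}$. The sum becomes
$$\sum_{i+j=m}A_{i+p^n}A_j(\beta_j-\beta_{i+p^n})+\sum_{i+j=m}A_{i+p^n}A_j\beta_{i+p^n}-A_iA_{j+p^n}\beta_{j+p^n}.$$
I would symmetrize in $i\leftrightarrow j$ and control the result using Dwork's congruences $A_{i+p^n}A_j\equiv A_iA_{j+p^n}$-type relations, with $\beta$ weights and induction on $n$ through $A_{l+pj}\approx A_lA'_j$. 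The main obstacle I expect is the Lipschitz lemma itself, specifically showing that the $\Gamma_p$-ratio is congruent to $1$ with enough precision. This is where the sign $(-1)^{se}$, and for $p=2$ the use of $q=4$ and $c\in 1+qW$, become essential.
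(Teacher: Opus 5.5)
Your outline is the same architecture the paper uses for Theorem~\ref{main:1}, which it models on Wang's proof of this statement: reduce to $c=1$, rewrite the claim as $S_m=\sum_{i+j=m}(A_{i+p^n}B_j-A_iB_{j+p^n})\equiv 0\pmod{p^n}$, show that $\beta_k=B_k/A_k$ is integral and $1$-Lipschitz, then run a Dwork-type descent. Your equivalence with $S_m\equiv 0$ is correct. However, the reduction step is wrong as written, and the two substantive steps are not supplied.

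\textbf{The reduction to $c=1$.}
\begin{itemize}
\item $c^{1/p}$ need not exist in $W$: for $p$ odd, $(1+pW)^p=1+p^2W$.
\item Your identity carries a spurious factor $c^{a'}$. In fact $[t^{a'}F']^{\sigma_c}=(ct^p)^{a'}F'(ct^p)$ is just $t^{pa'}F'(t^p)$ with $t^p\mapsto ct^p$.
\item The difference is not $(c^{a'}-1)$ times a fixed series. Writing $B_k^{\circ}$ for the coefficients when $c=1$ and $k=l+pj$,
\[
\frac{B_k-B_k^{\circ}}{A_k}=-(-1)^{se}\,\frac{c^{(k+a)/p}-1}{k+a}\cdot\frac{A'_j}{A_k},
\]
and the first factor depends on $k$. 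There is no evident way to write the corresponding series through $F$ and $F/F^{(1)}(t^p)$, so Theorem~\ref{theorem:1} does not absorb it.
\end{itemize}
The correct move (Lemma~\ref{reduced}) is to reduce inside the Lipschitz lemma. Both factors above are integral and $1$-Lipschitz in $k$: the first precisely because $c\in 1+qW$, the second by Dwork's congruence. The rest of your argument uses only integrality and the Lipschitz property of $\beta_k$.

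\textbf{The Lipschitz lemma.} This is the real gap, and you defer it. For $k\equiv l\pmod p$ and $x=k+a\in p\Z_p$ one has exactly $\beta_k=\bigl(1-(-1)^{se}\phi(x)\bigr)/x$ with
\[
\phi(x)=\frac{A'_j}{A_k}=\Bigl(\frac{\Gamma_p(a)\,\Gamma_p(1-a+x)}{\Gamma_p(1)\,\Gamma_p(x)}\Bigr)^{s},\qquad (-1)^{se}\phi(0)=1\quad(p\ \text{odd}).
\]
The congruence $(-1)^{se}\phi(x)\equiv 1\pmod{x}$ that you invoke gives only integrality. To get $\beta_{k+p^{\nu}}\equiv\beta_k\pmod{p^{\nu}}$ you must compare $\phi(x)$ and $\phi(x+p^{\nu})$ modulo $p^{\nu+\operatorname{ord}_p(x)}$. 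That is second-order information, beyond the continuity $\Gamma_p(z_1)\equiv\Gamma_p(z_2)\pmod{p^{\nu}}$.

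The paper, following Wang, obtains it from three ingredients:
\begin{itemize}
\item first-order expansions of the $\Gamma_p$-quotients in terms of $\psi_p$, with errors of size $p^{2\operatorname{ord}}$;
\item the identification of the constant term, $\beta_l\equiv s\bigl(\psi_p(a+l)-\psi_p(1+l)\bigr)\pmod{p^{\operatorname{ord}_p(a+l)}}$ (the analogue of Lemma~\ref{psi_p}), which makes the first-order terms cancel;
\item a case analysis on $\operatorname{ord}_p(k+a)$ versus $\operatorname{ord}_p(a+l)$ and the size of the shift (Lemmas~\ref{lem:1} and~\ref{Lipschitz}).
\end{itemize}
A ``Taylor expansion in $j$'' would instead need explicit bounds on the Taylor coefficients of $\log\Gamma_p$ on $p\Z_p$, which you neither state nor prove. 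The case $p=2$ is not addressed at all.

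\textbf{The final descent.} Your symmetrization does give $S_m\equiv\sum_{i+j=m}\beta_j(A_{i+p^n}A_j-A_iA_{j+p^n})\pmod{p^n}$, which is Lemma~\ref{A_cong}; the second sum in your decomposition vanishes identically under $i\leftrightarrow j$. But $A_{i+p^n}A_j\equiv A_iA_{j+p^n}$ fails termwise in general. What is needed is Dwork's residue-class congruence (Lemma~\ref{keylem}):
\[
\sum_{\substack{i+j=m\\ i\equiv k\ (\mathrm{mod}\ p^{n-r})}}\bigl(A_iA_{j+p^n}-A_jA_{i+p^n}\bigr)\equiv 0\pmod{p^{r+1}}\qquad(0\le r\le n).
\]
You then regroup the $j$'s modulo $p^n, p^{n-1},\dots,p^0$. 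At each stage the Lipschitz modulus $p^{n-r-1}$ of $\beta$ is compensated by the divisibility $p^{r+1}$ of the inner sum. At the end only $\sum_{i+j=m}(A_{i+p^n}A_j-A_iA_{j+p^n})=0$ remains.
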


Suppose that there exists an integer $r > 0$ such that $a^{(r)}=a$. 
Then the theorem implies that $\wF_{a, \dots, a; \underline{1}}(t)$ is a $p$-adic analytic function in the sense of Krasner, i.e. we have 
\begin{align*}
\mathscr{\widehat{F}}^{(\sigma)}_{a, \ldots, a; \underline{1}}(t) \in W\langle t, h_{a, \ldots, a; \underline{1}}(t)^{-1} \rangle:= \varprojlim_{n \geq 1}(W/p^nW[t, h_{a, \ldots, a; \underline{1}}(t)^{-1}]),
\end{align*}
where  
$$h_{a, \ldots, a; \underline{1}}(t)=\prod_{i=0}^{r-1}[F^{(i)}_{a, \ldots, a; \underline{1}}(t)]_{<p}$$ 
(see \cite[Corollary 3.2]{Wang}).

\section{Definition of $\wF_{a, a;2a}(t)$} \label{definition}
\subsection{$p$-adic Gamma function and $p$-adic Beta function}
Let $p$ be an odd prime. 
For $z \in \Z_p$, {\it Morita's $p$-adic Gamma function} $\Gamma_p \colon \Z_p \to \Z_p^{\times}$ (\cite{Morita}, cf. \cite[Section IV, 2]{Koblitz}) is defined by 
$$\Gamma_p(z)= \lim_{n \in \Z_{\ge0}, n \to z}(-1)^n \prod_{\substack{0 < i < n \\ p \nmid i}} i. $$
If $z_1 \equiv z_2 \pmod{p^s}$, then we have 
\begin{align} \label{p_adic_gamma}
\Gamma_p(z_1) \equiv \Gamma_p(z_2) \pmod{p^s}, 
\end{align}
hence this function is a $p$-adic continuous function on $\Z_p$. 
The $p$-adic Gamma function satisfies the reflection formula 
\begin{align} \label{reflection}
& \Gamma_p(x) \Gamma_p(1-x)=-(-1)^{l} \qquad (x \in \Z_p),  
\end{align} 
where $l \in \{0, \ldots, p-1\}$ is the unique integer such that $x+l \equiv 0 \pmod{p}$. 
For $\al \in \Z_p$ and $n \in \Z_{\ge 1}$, we define 
 \begin{align} \label{p_pochhammer}
\{\al\}_n = \prod_{\substack{1 \le i \le n \\ p \nmid (\al +  i - 1)}} (\al +  i -1)
 \end{align}
 and $\{\al\}_0=1$. 
Then, one has 
$$\{\al\}_n= (-1)^n \dfrac{\Gamma_p(\al+n)}{\Gamma_p(\al)}. $$
For $x, y \in \Z_p$, we define the {\it $p$-adic Beta function} $B_p(x, y)$ by 
\begin{align*}
B_p(x, y)=\dfrac{\Gamma_p(x) \Gamma_p(y)}{\Gamma_p(x+y)}. 
\end{align*}
Since $\Gamma_p(z)\in\Z_p^\times$ for any $z\in\Z_p$, we have $B_p(x,y)\in\Z_p^\times$.

\subsection{Definition of $\wF_{a,a;2a}(t)$} \label{defFhat}
From now on, for a $p$-adic integer 
$$a=-[a]_0-[a]_1p-[a]_2p^2-\cdots -[a]_np^n-\cdots, $$
we suppose the following two conditions: 
\begin{enumerate}
\item $a \in N^{-1} \Z$ and $p \nmid N$ for some integer $N \ge 2$, 
\item for any $i \in \Z_{\ge 0}$, $[a]_i < [2a]_i$, i.e., $0 < [a]_i \leq (p-1)/{2}$. 
\end{enumerate}
Under these conditions, we have $F_{a,a;2a}(t) \in \Z_p[[t]]$ (see Theorem \ref{theorem:1}).  
Let $l \in \{0, \dots p-1 \}$ be the unique integer such that $a+l \equiv 0 \pmod{p}$.

\begin{dfn}
Put 
\begin{align*}
\widehat{G}^{(\sigma)}_{a, a;2a}(t) & =t^{-a} \int_0^t(t^a F_{a,a;2a} (t) - (-1)^lB_p(a,a)^{-1} [t^{a'} F_{a',a'; (2a)'}(t)]^{\sigma}) \dfrac{dt}t. 
\end{align*}
Define a new type of $p$-adic hypergeometric function by 
$$\widehat{\mathscr{F}}_{a,a;2a}^{(\sigma)} (t) = \dfrac{\widehat{G}^{(\sigma)}_{a, a;2a}(t) }{F_{a,a;2a}(t)}. $$
\end{dfn}
If we write $F_{a,a;2a}(t) =\sum_{k=0}^{\infty} A_k t^k$, $F_{a',a ';(2a)'}(t) =\sum_{k=0}^{\infty} A^{(1)}_k t^k$ and $\widehat{G}^{(\sigma)}_{a, a;2a}(t)=\sum_{k=0}^{\infty} B_k t^k$, then we have 
$$B_k=\dfrac1{k+a} \left(A_k - (-1)^lB_p(a,a)^{-1} A^{(1)}_{\frac{k-l}p} c^{\frac{k+a}p} \right), $$
where $A^{(1)}_{\frac{m}p}=0$ if $m \not \equiv 0 \pmod{p}$ or $m <0$.

\begin{rmk} \label{relation_Wang}
When $a = 1/2$, the above definition agrees with Wang's $p$-adic hypergeometric function defined in Section \ref{Wang_pHGF}. 
Indeed, since $\Gamma_p(1)=-1$, $\Gamma_p(\frac12)^2=-\left(\frac{-1}p \right)=-(-1)^{\frac{p-1}2}$ and $l=(p-1)/2$, we have 
\begin{align*} 
(-1)^lB_p\left(\frac12, \frac12 \right)^{-1}=(-1)^l \cdot  \dfrac{\Gamma_p(1)}{\Gamma_p(\frac12)^2}=(-1)^{l-\frac{p-1}2}=1. 
\end{align*}
On the other hand, for Wang's function, we have $s=2$ and $e=l$, so that
\[
(-1)^{se}=(-1)^{2l}=1.
\]
Thus, the two definitions coincide.
\end{rmk}

\begin{lem} \label{integral}
For any $k \in \Z_{\ge 0}$, we have $B_k \in W$. Consequently, 
$$\widehat{\mathscr{F}}_{a,a;2a}^{(\sigma)} (t)  \in W[[t]]. $$
\end{lem}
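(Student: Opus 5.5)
The plan is to check $B_k\in W$ directly from the formula
$$B_k=\dfrac1{k+a}\Bigl(A_k-(-1)^lB_p(a,a)^{-1}A^{(1)}_{\frac{k-l}p}c^{\frac{k+a}p}\Bigr),$$
splitting according to the residue of $k$ modulo $p$. The expected key fact is a ``Dwork-type'' factorization of $A_k$ through $A^{(1)}_m$ with a $p$-adic Gamma factor, plus the continuity \eqref{p_adic_gamma} and the reflection formula \eqref{reflection}. Once $B_k\in W$ is known, the second assertion follows: $F_{a,a;2a}(t)\in\Z_p[[t]]$ has constant term $1$, so it is a unit in $W[[t]]$.

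\emph{Case $k\not\equiv l\pmod p$.} Here $A^{(1)}_{(k-l)/p}=0$ by convention, and $k+a\equiv k-l\not\equiv 0\pmod p$. So $k+a\in\Z_p^\times$ and $B_k=A_k/(k+a)\in\Z_p$, since $A_k\in\Z_p$ by Theorem \ref{theorem:1}.

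\emph{Case $k=l+pm$ with $m\ge 0$.} Then $k+a=p(m+a')$. Set $v=\operatorname{ord}_p(m+a')$, so that we must prove
$$A_k\equiv(-1)^lB_p(a,a)^{-1}A^{(1)}_m\,c^{m+a'}\pmod{p^{v+1}}.$$
Since $c\in 1+pW$, the power $c^{x}$ is defined for $x\in\Z_p$ and $c^{m+a'}\equiv 1\pmod{p^{v+1}}$. Because $A^{(1)}_m\in\Z_p$ by Theorem \ref{theorem:1}, it suffices to show
$$A_k\equiv(-1)^lB_p(a,a)^{-1}A^{(1)}_m\pmod{p^{v+1}}.$$

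To do this, I separate the factors divisible by $p$ in $(a)_k$, $(1)_k$ and $(2a)_k$.
\begin{itemize}
\item In $(a)_k$, the factors $a+i$ with $i\equiv l\pmod p$ and $i<l+pm$ are $a+l+pj$ for $0\le j\le m-1$. Their product is $p^m(a')_m$.
\item In $(1)_k$, the multiples of $p$ up to $k$ number $\lfloor k/p\rfloor=m$, giving $p^m m!$.
\item For $(2a)_k$, condition (ii) gives $0<l\le (p-1)/2$, so $[2a]_0=2l$ with no carry, and $(2a)'=2a'$. The relevant indices are $i=2l+pj<l+pm$, which gives exactly $j\le m-1$ because $0<l<p$. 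Hence the product is $p^m((2a)')_m$.
\end{itemize}
The powers of $p$ cancel, and using $\{\al\}_n=(-1)^n\Gamma_p(\al+n)/\Gamma_p(\al)$ we get
$$A_k=A^{(1)}_m\cdot\dfrac{\Gamma_p(a+k)^2\,\Gamma_p(1)\,\Gamma_p(2a)}{\Gamma_p(a)^2\,\Gamma_p(1+k)\,\Gamma_p(2a+k)}.$$
The signs cancel because they are $(-1)^{2k}/(-1)^{2k}$.

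Now $a+k=p(m+a')$, $1+k=1-a+p(m+a')$ and $2a+k=a+p(m+a')$, so by \eqref{p_adic_gamma}, modulo $p^{v+1}$,
$$\Gamma_p(a+k)\equiv\Gamma_p(0)=1,\qquad \Gamma_p(1+k)\equiv\Gamma_p(1-a),\qquad \Gamma_p(2a+k)\equiv\Gamma_p(a).$$
Since all these values are units, the Gamma quotient is congruent to
$$\dfrac{\Gamma_p(1)\Gamma_p(2a)}{\Gamma_p(a)^3\Gamma_p(1-a)}.$$
Using $\Gamma_p(1)=-1$ and the reflection formula \eqref{reflection}, $\Gamma_p(a)\Gamma_p(1-a)=-(-1)^l$, this equals $(-1)^l\Gamma_p(2a)/\Gamma_p(a)^2=(-1)^lB_p(a,a)^{-1}$. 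This gives the required congruence.

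The main obstacle I expect is the bookkeeping in the second case, namely getting exactly $m$ factors divisible by $p$ in each Pochhammer symbol. This is precisely where condition (ii), $0<[a]_0\le (p-1)/2$, is used: it ensures $(2a)'=2a'$ and the correct count for $(2a)_k$. The other delicate point is making sure the congruence is obtained modulo $p^{v+1}$ and not merely modulo $p$, which is why the shifts are written as multiples of $p(m+a')$ before applying \eqref{p_adic_gamma}.
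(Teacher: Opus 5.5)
Your proof is correct and follows essentially the same route as the paper. Both arguments reduce to $A_k\equiv(-1)^lB_p(a,a)^{-1}A^{(1)}_{(k-l)/p}$ modulo $p^{\operatorname{ord}_p(k+a)}$, use the factorization $A_k/A^{(1)}_{(k-l)/p}=\{a\}_k^2/(\{1\}_k\{2a\}_k)$, and evaluate the resulting $p$-adic Gamma quotient via \eqref{p_adic_gamma} and \eqref{reflection}. The only differences are minor: you derive the factorization directly (the paper cites Asakura's Lemma 3.6), and you handle $\{a\}_k/\{1\}_k$ through $\Gamma_p(a+k)\equiv\Gamma_p(0)=1$ instead of Wang's congruence $\{a\}_k/\{1\}_k\equiv(-1)^{k-\lfloor k/p\rfloor}$.
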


\begin{proof}
If $k \not \equiv l \pmod{p}$, then $k+a \not \equiv 0 \pmod{p}$. 
Hence we have 
$$B_k = \dfrac{A_k}{k+a} \in W. $$
For $k \equiv l \pmod{p}$, we write $k+a=d p^m$ with $p \nmid d$. 
Then we have 
$$c^{\frac{k+a}p}= c^{p^{m-1} d} \equiv 1 \pmod{p^m}$$
since $c \in 1+pW$. 
Hence, it suffices to show 
$$A_k \equiv (-1)^lB_p(a, a)^{-1} A^{(1)}_{\frac{k-l}p} \pmod{p^m}. $$
By \cite[Lemma 3.6]{As}, we have 
$$A_k (A^{(1)}_{\frac{k-l}p})^{-1} =\dfrac{\{a\}_k \{a\}_k}{\{2a\}_k \{1\}_k}  =  \left(\dfrac{\{a\}_k}{\{1\}_k} \right)^2 \dfrac{\{1\}_k}{\{2a\}_k}. $$
As in the proof of \cite[Lemma 2.2]{Wang}, we have 
$$\dfrac{\{a\}_k}{\{1\}_k} \equiv (-1)^{k - \lfloor k/p \rfloor} \pmod{p^m}. $$
On the other hand, by \eqref{p_pochhammer} and \eqref{reflection}, we have 
\begin{align*}
\frac{\{1\}_k}{\{2a\}_k} 
&= \dfrac{\Gamma_p(1+k) \Gamma_p(2a)}{\Gamma_p(1) \Gamma_p(2a+k)}
\equiv \dfrac{\Gamma_p(1-a)\Gamma_p(2a)}{\Gamma_p(1) \Gamma_p(a)} = (-1)^l B_p(a, a)^{-1} \pmod{p^m}, 
\end{align*}
which finishes the proof. 
\end{proof}

One of the main theorems in this paper is that $\wF_{a,a;2a}(t)$ satisfies congruence relations similar to Wang's (see Theorem \ref{main:1}). 
Let $r$ be a positive integer such that $a^{(r)}=a$. 
By applying this theorem and arguing as in the proof of \cite[Corollary 3.4]{As} or \cite[Corollary 3.2]{Wang}, we similarly obtain that $\wF_{a,a;2a}(t)$ is a $p$-adic analytic function in the sense of Krasner, i.e., we have
\begin{align} \label{convergent}
\wF_{a,a;2a}(t) \in W\langle t, \widehat{h}_a(t)^{-1} \rangle, \quad \widehat{h}_a(t)=\prod_{i=0}^{r-1}[F^{(i)}_{a,a;2a}(t)]_{<p}. 
\end{align}

\section{Proof of Theorem \ref{main:1}} \label{main_proof}
\subsection{Preliminary lemmas} 
To prove Theorem \ref{main:1}, we first prove Lemma \ref{Lipschitz}. 
The following lemma shows that it suffices to prove Lemma \ref{Lipschitz} in the case $c=1$.

\begin{lem} \label{reduced}
The proof of Lemma \ref{Lipschitz} is reduced to the case $\sigma(t)=t^p$ (i.e., $c=1$). 
\end{lem}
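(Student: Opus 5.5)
Write $B_k^{(c)}$ for the coefficients of $\widehat{G}^{(\sigma)}_{a,a;2a}(t)$ when $\sigma(t)=ct^p$, and $B_k^{(1)}$ for the case $c=1$. Lemma \ref{Lipschitz} asserts that $B_{m_1}/A_{m_1}\equiv B_{m_2}/A_{m_2} \pmod{p^n}$ whenever $m_1\equiv m_2 \pmod{p^n}$. I will reduce this to the case $c=1$ by showing that the difference between the $c$-version and the $1$-version of $B_k/A_k$ is itself a Lipschitz function of $k$ with constant $1$, i.e.\ it satisfies the same congruence property.

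The first step is to compute the difference directly from the explicit formula for $B_k$. If $k\not\equiv l \pmod p$, then $B^{(c)}_k=B^{(1)}_k$ and there is nothing to do. If $k\equiv l \pmod p$, then
$$B^{(c)}_k-B^{(1)}_k=-(-1)^lB_p(a,a)^{-1}A^{(1)}_{\frac{k-l}p}\,\frac{c^{\frac{k+a}p}-1}{k+a}.$$
Dividing by $A_k$ gives
$$\frac{B^{(c)}_k}{A_k}-\frac{B^{(1)}_k}{A_k}=-(-1)^lB_p(a,a)^{-1}\,\frac{A^{(1)}_{(k-l)/p}}{A_k}\cdot \frac{c^{(k+a)/p}-1}{k+a}=:u_k\cdot v_k.$$
I will check that each factor has the needed property.

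For the second factor, write $c=\exp(\log c)$, which is legitimate since $c\in 1+pW$ and $p$ is odd. Then
$$v_k=\frac{\exp\!\left(\frac{k+a}{p}\log c\right)-1}{k+a}=\frac{\log c}{p}\sum_{j\ge1}\frac{\left(\frac{(k+a)\log c}{p}\right)^{j-1}}{j!}.$$
Since $\frac{\log c}{p}\in W$, the $j$th term has valuation at least $(j-1)-v_p(j!)\ge0$, so $v_k\in W$. Moreover $v_k$ is the value at $x=k+a$ of an entire power series with integral coefficients in $x$, and the higher terms acquire extra powers of $p$. Hence $k_1\equiv k_2 \pmod{p^n}$ implies $v_{k_1}\equiv v_{k_2} \pmod{p^n}$. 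Checking that the factor $x^{j-1}/j!$ behaves correctly is elementary: the difference $x_1^{j-1}-x_2^{j-1}$ is divisible by $x_1-x_2$, and $v_p(j!)\le j-1$.

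For the first factor, the proof of Lemma \ref{integral} showed that
$$\frac{A_k}{A^{(1)}_{(k-l)/p}}=\left(\frac{\{a\}_k}{\{1\}_k}\right)^2\frac{\{1\}_k}{\{2a\}_k}=\frac{\Gamma_p(a+k)^2\,\Gamma_p(2a)}{\Gamma_p(a)^2\,\Gamma_p(1+k)\,\Gamma_p(2a+k)}.$$
This is a quotient of $p$-adic Gamma values, all of which are units. By \eqref{p_adic_gamma} it depends on $k$ only modulo $p^n$, up to a congruence modulo $p^n$. So $u_k$ is a unit that is Lipschitz in $k$, and therefore the product $u_kv_k$ is Lipschitz as well.

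Combining the steps: if Lemma \ref{Lipschitz} holds for $B^{(1)}$, then $B^{(c)}_k/A_k=B^{(1)}_k/A_k+u_kv_k$ also satisfies it, since each summand satisfies it (the extra term being $0$ when $k\not\equiv l\pmod p$). The one point needing care is the mixed case: $m_1\equiv m_2 \pmod{p^n}$ with $n\ge1$ forces $m_1\equiv m_2\pmod p$, so both indices lie on the same side of the condition $k\equiv l \pmod p$. The main obstacle is the Lipschitz estimate for $v_k$, namely controlling $v_p(j!)$ against the powers of $p$ coming from $\log c/p$. I expect this to be the routine bound $v_p(j!)\le (j-1)/(p-1)$, which is comfortably sufficient for $p\ge3$.
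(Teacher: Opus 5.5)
Your proposal is correct and follows the paper's proof. The paper uses exactly the same decomposition $B_k/A_k=B_k^{\circ}/A_k+u_kv_k$, but cites \cite[Corollary 4.3]{N2} for the Lipschitz property of $A^{(1)}_{(k-l)/p}/A_k$ and \cite[Lemma 3.7]{Wang} for that of $(c^{(k+a)/p}-1)/(k+a)$. You instead prove both facts directly: the first cleanly via \eqref{p_adic_gamma}, up to a harmless missing factor $\Gamma_p(1)=-1$, and the second via the exponential series.

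One bookkeeping correction is needed in your treatment of $v_k$. The series is not integral in $x$, since the coefficient $(\log c/p)^j/j!$ can have negative valuation. Moreover, $x_1^{j-1}-x_2^{j-1}=(x_1-x_2)\sum_i x_1^ix_2^{j-2-i}$ gains only $p^{j-2}$ from $v_p(x_i)\ge 1$. So the bound you actually need is $v_p(j!)\le j-2$ for $j\ge 2$, which holds for $p\ge 3$ because $v_p(j!)\le\lfloor (j-1)/(p-1)\rfloor$. The bound $v_p(j!)\le j-1$ that you state would only give the congruence modulo $p^{n-1}$.
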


\begin{proof}
Let $B^{\circ}_n$ denote the coefficients of $\widehat{G}^{(\sigma)}_{a,a;2a}(t)$ in the case $c=1$. 
Then, we have 
\begin{align*}
\dfrac{B_n}{A_n} = \dfrac{B_n^{\circ}}{A_n} +(-1)^{l+1} B_p(a, a)^{-1} \left(\dfrac{c^{\frac{n+a}{p}} -1}{n+a} \right) \dfrac{A^{(1)}_{\frac{n-l}{p}}}{A_n}. 
\end{align*}
Here, the value $(c^{\frac{m}{p}}-1)/m=0$ when $p \nmid m$. 
By \cite[Corollary 4.3]{N2}, if $n_1 \equiv n_2 \pmod{p^n}$, then we have 
$$\dfrac{A^{(1)}_{\frac{n_1-l}{p}}}{A_{n_1}} \equiv \dfrac{A^{(1)}_{\frac{n_2-l}{p}}}{A_{n_2}} \pmod{p^n}. $$
Therefore, it suffices to show that for $n_1 \equiv n_2 \pmod{p^m}$, 
$$\dfrac{c^{\frac{n_1}p}-1}{n_1} \equiv \dfrac{c^{\frac{n_2}p}-1}{n_2} \pmod{p^m}. $$
This is proved in the proof of \cite[Lemma 3.7]{Wang}. 
\end{proof}

Throughout the rest of this subsection, we assume that $c=1$.

\begin{lem} \label{Beta_cong}
Let $k$ be a positive integer such that $a+k=dp^m$ with $p \nmid d$.  
Then we have  
\begin{align*}
B_p(a, a)^{-1} \dfrac{\{2a\}_k}{\{1\}_k} \equiv (-1)^l \pmod{p^{2m}}. 
\end{align*}
\end{lem}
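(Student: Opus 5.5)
The plan is to rewrite both sides through Morita's $\Gamma_p$, use the reflection formula \eqref{reflection} to reduce the claim to a symmetric second‑order congruence for $\Gamma_p$, and then settle that congruence with a first‑order Taylor expansion.

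\emph{Step 1: rewrite in terms of $\Gamma_p$.} Put $h:=a+k=dp^m\in p^m\Z_p$. From $\{\al\}_n=(-1)^n\Gamma_p(\al+n)/\Gamma_p(\al)$ the signs $(-1)^k$ cancel, and
$$\dfrac{\{2a\}_k}{\{1\}_k}=\dfrac{\Gamma_p(2a+k)\Gamma_p(1)}{\Gamma_p(2a)\Gamma_p(1+k)}=\dfrac{\Gamma_p(1)\Gamma_p(a+h)}{\Gamma_p(2a)\Gamma_p(1-a+h)},$$
since $2a+k=a+h$ and $1+k=1-a+h$. Multiplying by $B_p(a,a)^{-1}=\Gamma_p(2a)/\Gamma_p(a)^2$ cancels $\Gamma_p(2a)$. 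The left-hand side of the lemma therefore equals
$$\dfrac{\Gamma_p(1)\,\Gamma_p(a+h)}{\Gamma_p(a)^2\,\Gamma_p(1-a+h)}.$$

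\emph{Step 2: apply reflection.} Since $h\equiv 0\pmod p$, the integer attached to $a+h$ in \eqref{reflection} is the same $l$ as for $a$. Hence
$$\Gamma_p(a+h)\Gamma_p(1-a-h)=-(-1)^l \quad\text{and}\quad \Gamma_p(a)\Gamma_p(1-a)=-(-1)^l.$$
Substituting the first relation and using $\Gamma_p(1)=-1$, the quantity becomes
$$(-1)^l\,\dfrac{1}{\Gamma_p(a)^2\,\Gamma_p(x+h)\Gamma_p(x-h)}, \qquad x:=1-a.$$
By the second relation, $\Gamma_p(a)^2\Gamma_p(x)^2=1$. So the lemma is equivalent to
$$\Gamma_p(x+h)\,\Gamma_p(x-h)\equiv \Gamma_p(x)^2 \pmod{p^{2m}}\qquad (h\in p^m\Z_p).$$
The point is that this expression is even in $h$, so the first-order terms cancel.

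\emph{Step 3: first-order expansion (main obstacle).} It remains to show that for $p$ odd there is $\lambda(x)\in\Z_p$ with
$$\Gamma_p(x+h)\equiv\Gamma_p(x)\bigl(1+\lambda(x)h\bigr)\pmod{h^2\Z_p}, \qquad h\in p\Z_p.$$
Granting this, the product in Step 2 is $\Gamma_p(x)^2(1-\lambda^2h^2)$ modulo $h^2$, which is $\equiv\Gamma_p(x)^2 \pmod{p^{2m}}$, as required.

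The expansion is stronger than the Lipschitz property \eqref{p_adic_gamma}. My first approach is to cite the local analyticity of $\Gamma_p$ on residue discs $x+p\Z_p$ for odd $p$ (Morita/Dwork, cf. Robert's treatment). There $\log\Gamma_p(x+h)-\log\Gamma_p(x)=\sum_{n\ge1}c_nh^n$ with $c_1=\Gamma_p'(x)/\Gamma_p(x)\in\Z_p$ and $c_nh^n\in h^2\Z_p$ for $n\ge2$; one has to check that this integrality survives exponentiation when $p\ge3$.

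If this citation is not clean enough, the fallback is elementary. Reduce by continuity to $h=p^mu$ with $u$ a positive integer. Then write $\Gamma_p(x+h)\Gamma_p(x-h)/\Gamma_p(x)^2$ as the quotient
$$\prod_{0\le j<h,\ p\nmid x+j}(x+j)\Big/\prod_{1\le j\le h,\ p\nmid x-j}(x-j),$$
up to sign. Group the factors into blocks of length $p^m$; within each block, expand $\prod(y+p^mt_y)\equiv\prod y\,(1+p^m\sum t_y/y)\pmod{p^{2m}}$ over a unit residue system $y$. The linear terms should cancel between numerator and denominator; here $p$ odd is needed so that $\sum_{y\in(\Z/p^m)^\times}1/y\equiv0\pmod{p^m}$.
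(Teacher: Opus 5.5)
Your proof is correct, but it is organized differently from the paper's. The paper applies the reflection formula \eqref{reflection} at $2a$ and at $a$, rewriting $B_p(a,a)^{-1}$ as $-\Gamma_p(1-a)^2/\Gamma_p(1-2a)$, and substitutes $a=dp^m-k$. It then uses \eqref{p_adic_gamma} to replace $d$ by an integer truncation modulo $p^{2m}$ and manipulates explicit finite products. There the cancellation of first-order terms appears as $(1-\od p^m/j)^2/(1-2\od p^m/j)\equiv 1\pmod{p^{2m}}$. A separate parity count fixes the sign, using hypothesis (ii) through $\lfloor 2l/p\rfloor=0$.

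You reflect at $a+h$ and at $a$ instead, which gives the exact identity
\[
B_p(a,a)^{-1}\frac{\{2a\}_k}{\{1\}_k}=(-1)^l\,\frac{\Gamma_p(1-a)^2}{\Gamma_p(1-a+h)\,\Gamma_p(1-a-h)},\qquad h=a+k .
\]
This removes all sign bookkeeping and never uses hypothesis (ii), so your argument proves the lemma for every $a\in\Z_p$ and odd $p$. It also explains the modulus $p^{2m}$ by evenness in $h$.

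Your elementary fallback does close the remaining congruence. Put $Q(z)=\prod_{0\le j<p^m,\ p\nmid z+j}(z+j)$. Then $Q(z+p^m)\equiv Q(z)\pmod{p^{2m}}$, because $\sum_{y\in(\Z/p^m)^\times}y^{-1}\equiv0\pmod{p^m}$. Your numerator and denominator are $\prod_{i=0}^{u-1}Q(x+ip^m)$ and $\prod_{i=1}^{u}Q(x-ip^m)$, both $\equiv Q(x)^u$. Here $x$ need not be an integer, since the functional equation of $\Gamma_p$ holds on all of $\Z_p$; only $h$ must be replaced by a positive multiple $up^m$.

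One caution: Step 3 as you phrase it, with a single $\lambda(x)$ valid modulo $h^2$ for all $h\in p\Z_p$, is false for $p=3$. Take $x=0$, so $\Gamma_3(0)=1$:
\begin{itemize}
\item $\Gamma_3(3)=-2\equiv1+3\cdot2\pmod 9$ forces $\lambda\equiv2\pmod3$;
\item $\Gamma_3(9)=-2240\equiv1+9\cdot3\pmod{81}$ forces $\lambda\equiv0\pmod3$.
\end{itemize}
So in your local-analyticity route, what breaks at $p=3$ is the claim $c_nh^n\in h^2\Z_p$ for $n\ge2$, not the exponentiation. What holds for every odd $p$ is the version with $\lambda=\lambda_m(x)$ depending on $m=\operatorname{ord}_p(h)$, defined by $Q(x)=-(1+p^m\lambda_m)$. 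That is all your evenness argument needs. It is still cleanest to rely on the block argument, which proves the symmetric congruence $\Gamma_p(x+h)\Gamma_p(x-h)\equiv\Gamma_p(x)^2$ directly.

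Within the paper's hypotheses, $p=3$ forces $a=1/2$. Then $\{2a\}_k=\{1\}_k$ and the lemma is an exact identity, so this does not affect the paper.
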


\begin{proof}
For a $p$-adic integer 
$$\al=-[\al]_0 -[\al]_1p -[\al]_2 p^2 -\cdots, $$
we write $\overline{\al}:=-[\al]_0 -[\al]_1p -[\al]_2 p^2 -[\al]_{m-1}p^{m-1}$.  
We compute that  
 \begin{align*}
 &B_p(a, a)^{-1} \dfrac{\{2a\}_k}{\{1\}_k} \\
 &= B_p(a, a)^{-1}\dfrac{\{2dp^m-2k\}_k}{\{1\}_k} \\
&= \dfrac{\Gamma_p(2a)}{\Gamma_p(a)^2} \dfrac{\{2dp^m -2k\}_k }{\{1\}_k} \\
&\overset{(*)}{=}-(-1)^{2l} \dfrac{\Gamma_p(1-a)^2}{\Gamma_p(1-2a)} \dfrac{\{2dp^m -2k\}_k }{\{1\}_k} \\
&=- \dfrac{\Gamma_p(1+k-dp^m)^2}{\Gamma_p(1+2k-2dp^m)} \dfrac{\{2dp^m -2k\}_k }{\{1\}_k} \\
& \overset{(**)}{\equiv} (-1)^{2k-\lfloor \frac{2k}p \rfloor - k+ \lfloor \frac{k}p \rfloor } \dfrac{\{1\}_{k-\od p^m}^2}{\{1\}_{2k-2\od p^m}} \dfrac{(2k-2\od p^m) \cdots (k+1 -2\od p^m)}{\{1\}_k} \pmod{p^{2m}}\\
&=(-1)^{2k-\lfloor \frac{2k}p \rfloor - k+ \lfloor \frac{k}p \rfloor }  \dfrac{\{1\}_{k-\od p^m}^2}{\{1\}_{k-2\od p^m}} \dfrac{1}{\{1\}_k} \\
&=(-1)^{2k-\lfloor \frac{2k}p \rfloor - k+ \lfloor \frac{k}p \rfloor }   \dfrac{\{1\}_k\{1+\od p^m\}_{-1-\od p^m}}{\{1\}_k \{1\}_{-1-\od p^m}} \prod_{\substack{j=1+\od p^m\\  p \nmid j}}^k \dfrac{\left(1-\frac{\od p^m}j\right)^2}{\left(1-\frac{2\od p^m}j\right)}, 
 \end{align*} 
 where $(*)$ follows from the reflection formula \eqref{reflection} and $(**)$ follows from \eqref{p_adic_gamma}.  
We compute that 
$$\dfrac{\left(1-\frac{\od p^m}j\right)^2}{\left(1-\frac{2\od p^m}j\right)} \equiv \left(1-2\frac{\od p^m}j\right)\left(1+2\frac{\od p^m}j\right)\equiv  1 \pmod{p^{2m}}$$
and
$$ \dfrac{\{1+\od p^m\}_{-1-\od p^m}}{ \{1\}_{-1-\od p^m}} = (-1)^{-1-\od p^m - \lfloor \frac{-1-\od p^m } {p} \rfloor}  = (-1)^{-\od p^{m-1}(p-1)} = 1. $$
On the other hand, since $k \equiv l \pmod{p}$, we write $k=l+bp$. 
Since $p$ is odd, we have   
\begin{align*}
2k - \lfloor \frac{2k}p \rfloor -k + \lfloor \frac{k}p \rfloor  &= 2(l+bp) - \lfloor \frac{2(l+bp)}p \rfloor -(l+bp)+\lfloor \frac{l+bp}p \rfloor  \\
& \equiv  l+ b(p-1) - \lfloor \frac{2l}{p} \rfloor  \pmod{2} \\
&\equiv l - \lfloor \frac{2l}{p} \rfloor  \pmod{2}. 
\end{align*}
By the assumption, we have $2l=2[a]_0 \in \{0, \ldots, p-1\}$. 
Therefore, we obtain $\lfloor 2l /p \rfloor=0$, thus we conclude that  
$$2k-\lfloor \frac{2k}p \rfloor - k+ \lfloor \frac{k}p \rfloor \equiv l \pmod{2}. $$
Therefore, we have 
\begin{align*} 
B_p(a, a)^{-1} \dfrac{\{2a\}_k}{\{1\}_k} \equiv (-1)^l \pmod{p^{2m}}, 
\end{align*}
which finishes the proof. 
\end{proof}

\begin{lem} \label{psi_p}
If $a+l=cp^n$ with $p \nmid c$, then 
we have 
$$\dfrac{B_l}{A_l} \equiv 2\psi_p(a+l) - 2\psi_p(1+l) \pmod{p^n}, $$
where $\psi_p(z)$ is the $p$-adic digamma function defined in Definition \ref{definition:3}. 
\end{lem}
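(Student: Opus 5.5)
With $c=1$ and $A^{(1)}_0=1$, the formula for $B_k$ specializes at $k=l$ to
$$\frac{B_l}{A_l}=\frac{1}{a+l}\Bigl(1-(-1)^lB_p(a,a)^{-1}A_l^{-1}\Bigr).$$
The plan is to show that $E:=(-1)^lB_p(a,a)^{-1}A_l^{-1}$ is a $p$-adic Gamma expression equal to $1$ at a base point, and then expand it to first order in $x:=a+l=cp^n$ modulo $p^{2n}$.

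First, since $l<p$, the ordinary Pochhammer symbols in $A_l$ equal the $p$-adic ones. Indeed, $a+i-1\equiv i-1-l\not\equiv 0$ for $1\le i\le l$. Also $2a+i-1\equiv i-1-2l\not\equiv0$, because $2l\le p-1$ by condition (2) of Section \ref{defFhat}. Hence $A_l=\{a\}_l^2/(\{2a\}_l\{1\}_l)$. Rewriting each $\{\cdot\}_l$ via $\{\al\}_n=(-1)^n\Gamma_p(\al+n)/\Gamma_p(\al)$ gives
$$E=E(x)=(-1)^l\frac{\Gamma_p(a+x)\,\Gamma_p(1-a+x)}{\Gamma_p(1)\,\Gamma_p(x)^2},$$
using $2a+l=a+x$ and $1+l=1-a+x$. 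By the reflection formula \eqref{reflection} together with $\Gamma_p(0)=1$ and $\Gamma_p(1)=-1$, we get $E(0)=1$. So it suffices to prove
$$E(x)\equiv 1+x\bigl(\psi_p(a)+\psi_p(1-a)-2\psi_p(0)\bigr)\pmod{p^{2n}}.$$
Dividing by $x$ (valuation $n$) then yields $B_l/A_l\equiv 2\psi_p(0)-\psi_p(a)-\psi_p(1-a)\pmod{p^n}$.

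The key step, and the main obstacle, is a first-order expansion of the form
$$\Gamma_p(z+x)/\Gamma_p(z)\equiv 1+x\,\psi_p(z)\pmod{p^{2n}}\qquad (z\in\Z_p,\ v_p(x)=n).$$
It needs to hold with the correct normalization, i.e.\ with $(\log\Gamma_p)'=\psi_p$ as in \cite[(2.13)]{As}. I would try to prove it directly from the product definition. Take integers $N\to z$ and compare $\prod_{0<i<N+x,\,p\nmid i} i$ with $\prod_{0<i<N,\,p\nmid i}i$ for $x$ a positive integer in the right class. After cancelling, this leaves $\prod(1+x/j)$ over $p\nmid j$ in ranges of length a multiple of $p^n$. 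Its expansion is $1+x\sum 1/j+O(x^2)$, and the sums $\sum 1/j$ converge to $\widetilde\psi_p$. The constant $-\gamma_p$ enters through the normalization, and it cancels here anyway since we only use the combination $\psi_p(a)+\psi_p(1-a)-2\psi_p(0)$. If this becomes messy, I would instead invoke the local analyticity of $\log\Gamma_p$ on $p\Z_p$-discs, with integral Taylor coefficients beyond the linear term for $p\ge3$.

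Finally, I convert the answer into the stated form in three steps.
\begin{enumerate}
\item Differentiating the reflection formula \eqref{reflection} on residue discs gives $\psi_p(a)=\psi_p(1-a)$.
\item Since $1-a=1+l-x$, I need $\psi_p(1+l-x)\equiv\psi_p(1+l)$ and $\psi_p(0)\equiv\psi_p(x)\pmod{p^n}$. Both follow from $\widetilde\psi_p(z+p^n y)\equiv\widetilde\psi_p(z)\pmod{p^n}$. This holds because a sum of $1/j$ over a complete system of units modulo $p^n$ vanishes modulo $p^n$, using $j\mapsto -j$ and $p$ odd.
\item Combining these, $B_l/A_l\equiv 2\psi_p(x)-2\psi_p(1+l)=2\psi_p(a+l)-2\psi_p(1+l)\pmod{p^n}$, as claimed.
\end{enumerate}
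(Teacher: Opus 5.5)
Your proof is correct, and it follows a genuinely different route from the paper's. The paper keeps the $(2a)$-part and the $(a)$-part separate. It writes $(-1)^l(1-xB_l/A_l)=\bigl[B_p(a,a)^{-1}(2a)_l/l!\bigr]\bigl[l!/(a)_l\bigr]^2$ and removes the first bracket to second order with Lemma \ref{Beta_cong}, which gives $\equiv(-1)^l \pmod{p^{2n}}$. It then expands the second bracket through the short product $l!/(a)_l=(-1)^l\prod_{j=1}^{l}(1-x/j)^{-1}$ (Wang's Lemma 3.9). This yields $B_l/A_l\equiv-2(\psi_p(1+l)+\gamma_p)$, and the last step is only $\psi_p(a+l)\equiv\psi_p(0)=-\gamma_p$. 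You instead fold everything into the single quotient $E(x)$, normalize it by reflection, and apply one general translation congruence for $\Gamma_p$. In the paper, Lemma \ref{Beta_cong} ensures the $2a$-part contributes nothing at first order; in your argument the symmetry $\psi_p(a)=\psi_p(1-a)$ does that job at the end. Your route is more uniform and does not need Lemma \ref{Beta_cong}. The paper's route stays with explicit finite products and reuses a lemma it needs anyway for Lemmas \ref{lem:1} and \ref{Lipschitz}.

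One correction concerns your key step. As stated, $\Gamma_p(z+x)/\Gamma_p(z)\equiv1+x\psi_p(z)\pmod{p^{2n}}$ is false for $p=3$, and the lemma does cover $p=3$ (with $a=1/2$). Take $z=0$: $\Gamma_3(3)=-2$ forces $\psi_3(0)\equiv-1\pmod 3$, whereas $\Gamma_3(9)=-2240\equiv 28\pmod{81}$ forces $\psi_3(0)\equiv 0\pmod 3$. For the same reason, your fallback claim about integral higher Taylor coefficients fails at $p=3$. What the product definition actually gives, for every odd $p$, is an exact identity: $\Gamma_p(N+x)=\Gamma_p(N)\Gamma_p(x)\prod_{0<j<N,\,p\nmid j}(1+x/j)$ for positive integers $N,x$ with $p\mid x$. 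Hence $\Gamma_p(z+x)\equiv\Gamma_p(z)\Gamma_p(x)\bigl(1+x\widetilde\psi_p(z)\bigr)\pmod{p^{2n}}$. The factor $\Gamma_p(x)$ does not depend on $z$, so it cancels exactly against $\Gamma_p(x)^2$ in $E(x)$. This gives $E(x)\equiv 1+x\bigl(\widetilde\psi_p(a)+\widetilde\psi_p(1-a)\bigr)\pmod{p^{2n}}$, which is precisely your remark that the constant cancels. Phrased this way, your argument is complete for all odd $p$.
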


\begin{proof}
We have 
 \begin{align*}
 \dfrac{B_l}{A_l} &=\dfrac{1}{a+l} \left(1- (-1)^l B_p(a,a)^{-1} \dfrac1{A_l} \right) \\
 &= \dfrac1{cp^n}  \left(1- (-1)^l B_p(a,a)^{-1} \dfrac{(2a)_l l!}{(a)_l(a)_l} \right). 
 \end{align*}
Rearranging this identity, we obtain 
 \begin{align*}
 (-1)^l \left( 1 -cp^n \dfrac{B_l}{A_l} \right) &=B_p(a, a)^{-1} \dfrac{(2a)_l l!}{(cp^n-l)_l^2} \\
 &=B_p(a, a)^{-1} \dfrac{(2a)_l}{l!} \dfrac{(l!)^2}{\{(l-cp^n) \cdots (1-cp^n)\}^2}. 
 \end{align*}
By \cite[Lemma 3.9]{Wang}, we have
 $$\dfrac{l!}{(l-cp^n) \cdots (1-cp^n)} \equiv (-1)^l(1+cp^n(\psi_p(1+l) + \gamma_p)) \pmod{p^{2n}}. $$
 On the other hand, by Lemma \ref{Beta_cong}, we have 
\begin{align*} 
B_p(a, a)^{-1} \dfrac{(2a)_l}{l!} \equiv (-1)^l \pmod{p^{2n}}. 
\end{align*}
Therefore, 
we have 
$$ (-1)^l \left( 1 -cp^n \dfrac{B_l}{A_l} \right) \equiv (-1)^{l}(1+cp^n(\psi_p(1+l) + \gamma_p))^2 \pmod{p^{2n}}.  $$
Since we have $\psi_p(a+l) \equiv \psi_p(0)=-\gamma_p \pmod{p^n}$ (see \cite[(2.13) and Theorem 2.6 (1)]{As}), 
we obtain  
$$\dfrac{B_l}{A_l} \equiv -(2 \psi_p (1+ l) + 2\gamma_p ) \equiv 2\psi_p(a+l) - 2\psi_p(1+l) \pmod{p^n}, $$
which finishes the proof. 
\end{proof}

\begin{lem} \label{lem:1}
If $k= l+bp^m$ with $p \nmid b$, then we have 
$$\dfrac{B_k}{A_k} \equiv \dfrac{B_l }{A_l} \pmod{p^m}. $$
\end{lem}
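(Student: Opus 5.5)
The plan is to compare $B_k/A_k$ and $B_l/A_l$ directly through their exact expressions in $p$-adic Pochhammer symbols, rather than expanding each one separately. With $c=1$, put
$$X_k:=(-1)^lB_p(a,a)^{-1}\frac{A^{(1)}_{\frac{k-l}p}}{A_k}=(-1)^lB_p(a,a)^{-1}\frac{\{2a\}_k\{1\}_k}{\{a\}_k^2}$$
(using \cite[Lemma 3.6]{As} as in the proof of Lemma \ref{integral}). Then $B_k/A_k=(1-X_k)/(k+a)$, and the goal is
$$\frac{1-X_k}{k+a}\equiv\frac{1-X_l}{l+a}\pmod{p^m}.$$
Write $a+l=cp^n$ with $p\nmid c$ (as in Lemma \ref{psi_p}), so that $k+a=cp^n+bp^m$. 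I assume $m\ge1$, since $k\equiv l\pmod p$.

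First step: compute $X_k/X_l$ exactly. Splitting each product at index $l$ gives
$$\frac{X_k}{X_l}=\frac{\{2a+l\}_{bp^m}\,\{1+l\}_{bp^m}}{\{a+l\}_{bp^m}^{2}} .$$
Each factor is a product, over a window of $bp^m$ consecutive integers shifted by a $p$-adic integer, of the terms prime to $p$. Since $a+l=cp^n$ and $2a+l=2cp^n-l$, I rewrite each window relative to the base window $\{0\}_{bp^m}$ or $\{1\}_{bp^m}$. I then expand the logarithm of $\prod(1+x/j)$, with $j$ running over the units of the window. The key inputs are two power-sum facts over blocks of $p^m$ consecutive units, valid for odd $p$:
$$\sum \frac1j\equiv 0\pmod{p^m},\qquad \sum\frac1{j^2}\equiv0\pmod{p^m}.$$
From these I expect $\{cp^n\}_{bp^m}\equiv\{0\}_{bp^m}\,(1+O(p^{n+m}))$, and similarly for $\{2cp^n\}_{bp^m}$. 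The shift by the integer $l$ (for $\{1+l\}_{bp^m}$, and the $-l$ inside $\{2a+l\}_{bp^m}$) produces a boundary ratio. This ratio is a product over the units in $[bp^m,bp^m+l]$ divided by the product over those in $[0,l]$, which is congruent to $1+bp^m\widetilde H_l$ modulo $p^{2m}$, where $\widetilde H_l=\sum_{j\le l,\,p\nmid j}1/j$. The two boundary terms will partly cancel, leaving
$$X_k=X_l\bigl(1+\lambda bp^m+O(p^{m+\min(n,m)})\bigr)$$
with an explicit $\lambda$, which I expect to be a combination of harmonic-type sums such as $2\widetilde H_l$.

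Second step: substitute this into the target congruence, splitting into cases according to $n$ versus $m$.

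If $n\ge m$, I would instead use Lemma \ref{Beta_cong} for both $k$ and $l$, since both $v_p(k+a)$ and $v_p(l+a)$ are at least $m$. Writing $\{a\}_k=\prod\bigl((k+a)-(k-i)\bigr)$ gives
$$\frac{B_k}{A_k}\equiv-2\widetilde H_k\pmod{p^{v_p(k+a)}},$$
and likewise for $l$. The first power-sum fact gives $\widetilde H_k\equiv\widetilde H_l\pmod{p^m}$, which settles this case.

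If $n<m$, then $v_p(k+a)=v_p(l+a)=n$ and $\frac1{k+a}=\frac1{cp^n}\bigl(1-\frac{b}{c}p^{m-n}+\cdots\bigr)$. The difference of the two sides then becomes
$$\frac{1-X_l}{cp^n}\cdot\frac bc\,p^{m-n}\;-\;\frac{X_l\lambda bp^m}{cp^n}+O(p^m).$$
This must vanish modulo $p^m$. The condition reduces to $(1-X_l)/(cp^n)\equiv -\lambda\pmod{p^n}$, that is, to Lemma \ref{psi_p} (which gives $B_l/A_l\equiv2\psi_p(a+l)-2\psi_p(1+l)$, i.e. $-2\widetilde H_l$ modulo $p^n$ after using $\psi_p(a+l)\equiv-\gamma_p$), together with $X_l\equiv1\pmod{p^n}$.

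The main obstacle is the precision bookkeeping in the first step. The error term must be $O(p^{m+n})$, not merely $O(p^m)$, because dividing by $k+a$ loses $p^n$. I must also check that the boundary term $\lambda$ produced by the shifted windows matches exactly $-2\widetilde H_l$. I would first verify the block-power-sum congruences to the needed precision; in particular I would confirm $\sum 1/j\equiv0\pmod{p^{m}}$ is enough when multiplied by $cp^n$. I would then handle the windows by pairing $j$ with $-j$ modulo $p^m$.
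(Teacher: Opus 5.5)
Your proof is correct, but it is organized differently from the paper's. The paper splits on $m\ne n$ versus $m=n$:
\begin{itemize}
\item In Case I it expands $\{1+l\}_{bp^m}/\{a+l\}_{bp^m}$ and $\{2a+l\}_{bp^m}/\{1+l\}_{bp^m}$ in terms of $\psi_p$ via \cite[Lemma 4.5]{N2}, and closes with Lemma \ref{psi_p}.
\item In Case II ($m=n$, $\operatorname{ord}_p(a+k)=m+m'>m$) it has to compute both terms of the numerator separately to precision $p^{2m+m'}$, using \cite[Lemma 3.10]{Wang} and Lemma \ref{Beta_cong}.
\end{itemize}
You split instead on $n\ge m$ versus $n<m$. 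For $n\ge m$ you use the closed form $B_k/A_k\equiv-2\widetilde H_k \pmod{p^{\operatorname{ord}_p(k+a)}}$, valid for every $k\equiv l \pmod p$. It follows from Lemma \ref{Beta_cong} together with the identity $\{a\}_k=(-1)^{k-\lfloor k/p\rfloor}\{1\}_k\prod_{j\le k,\,p\nmid j}(1-dp^v/j)$, where $a+k=dp^v$. Combined with $\widetilde H_k\equiv\widetilde H_l \pmod{p^m}$, this settles the paper's subcase $n>m$ and all of Case II in a few lines, and it reproves Lemma \ref{psi_p} along the way. This is a genuine simplification. For $n<m$ your argument is the paper's Case I in substance, with \cite[Lemma 4.5]{N2} replaced by direct block power-sum estimates. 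The paper gains brevity by citing; your version is self-contained.

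Three bookkeeping points to fix when you write it out.

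(i) The two boundary ratios reinforce rather than cancel. Both $\{1+l\}_{bp^m}/\{1\}_{bp^m}=\prod_{j=1}^{l}(1+bp^m/j)$ and $\{2cp^n-l\}_{bp^m}/\{2cp^n\}_{bp^m}=\prod_{i=1}^{l}\frac{2cp^n-i}{2cp^n+bp^m-i}$ are $\equiv 1+bp^mH_l \pmod{p^{m+\min(m,n)}}$. Hence $\lambda=2H_l$, not $-2\widetilde H_l$ as your last paragraph says.

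(ii) The first term of your displayed expansion has the wrong sign. Also, truncating $1/(1+\epsilon)$ term by term is not justified when $m<2n$. Use the exact identity instead, with $\epsilon=(b/c)p^{m-n}$:
\[
\frac{B_k}{A_k}-\frac{B_l}{A_l}=-\frac{\epsilon}{1+\epsilon}\Bigl(\frac{B_l}{A_l}+X_l\lambda\Bigr)+O(p^m).
\]
The condition is then exactly $B_l/A_l\equiv-2H_l \pmod{p^n}$, which you have.

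(iii) The congruence $\sum 1/j^2\equiv 0 \pmod{p^m}$ over a block fails for $p=3$, which the hypotheses allow (with $a=1/2$). Only $\sum 1/j^2\equiv 0 \pmod{p^{m-1}}$ holds in that case. This still suffices, because the quadratic term carries a factor $p^{2n}$ and $n\ge 1$.
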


\begin{proof}
Again, we write $a+l =cp^n$ with $p \nmid c$. 

\fbox{Case I: $m \neq n$} 

We have 
\begin{align*}
&1-(a+l+bp^m) \dfrac{B_{l+bp^m}}{A_{l+bp^m}} \\
&=(-1)^l B_p(a, a)^{-1} \dfrac{A^{(1)}_{bp^{m-1}}}{A_{l+bp^m}} \\
&=(-1)^l B_p(a, a)^{-1} \left(\dfrac{\{1\}_{l+bp^m}}{\{a\}_{l+bp^m}} \right)^2 \dfrac{\{2a\}_{l+bp^m}}{\{1\}_{l+bp^m}} \\
& = (-1)^l B_p(a, a)^{-1} \dfrac1{A_l} \left(\dfrac{\{1+l\}_{bp^m}}{\{a+l\}_{bp^m}} \right)^2 \left(\dfrac{\{2a+l\}_{bp^m}}{\{1+l\}_{bp^m}} \right)   \\
&\overset{(*)}{\equiv} (-1)^l B_p(a, a)^{-1} \dfrac1{A_l} \left(1-bp^m(\psi_p(a+l) -\psi_p(1+l)) \right)^2 \left(1-bp^m(\psi_p(1+l) -\psi_p(2a+l)) \right) \pmod{p^{2m}}\\
& \equiv \left(1-(a+l) \dfrac{B_l}{A_l} \right) \left(1-2bp^m(\psi_p(a+l) -\psi_p(1+l)) \right)\left(1-bp^m(\psi_p(1+l) -\psi_p(2a+l)) \right) \pmod{p^{2m}}, 
\end{align*}
where $(*)$ follows from \cite[Lemma 4.5]{N2}. 
Since $2a+l \equiv -l \pmod{p^n}$ and $\psi_p(1+l) =\psi_p(-l)$ (see \cite[Theorem 2.6 (2)]{As}), we have 
$$\psi_p(1+l) -\psi_p(2a+l) \equiv 0 \pmod{p^n}. $$
Therefore, by rearranging the equation above, if $n>m$ (resp. $n<m$), we have 
\begin{align*}
1-(a+l+bp^m) \dfrac{B_{l+bp^m}}{A_{l+bp^m}} \equiv 1-(a+l) \dfrac{B_l}{A_l} -2bp^m(\psi_p(a+l) -\psi_p(1+l))
\end{align*}
modulo $p^{2m}$ (resp. $p^{m+n}$).  
Hence we obtain 
\begin{align}\label{eq:1}
(a+l+bp^m) \left(\dfrac{B_l}{A_l} - \dfrac{B_{l+bp^m}}{A_{l+bp^m}} \right) \equiv bp^m \left( \dfrac{B_l}{A_l} -2 (\psi_p(a+l) -\psi_p(1+l)) \right) &
\end{align}
modulo $p^{2m}$ (resp. $p^{m+n}$). 
By Lemma \ref{psi_p}, the right-hand side of \eqref{eq:1} is congruent to $0$ modulo $p^{2m}$ (resp. $p^{m+n}$), hence we have 
\begin{align*}
(a+l+bp^m) \left(\dfrac{B_l}{A_l} - \dfrac{B_{l+bp^m}}{A_{l+bp^m}} \right) \equiv 0 
\end{align*}
modulo $p^{2m}$ (resp. $p^{m+n}$). 
This implies 
$$ \dfrac{B_{l+bp^m}}{A_{l+bp^m}} \equiv  \dfrac{B_{l}}{A_{l}}  \pmod{p^m}$$
since $\operatorname{ord}_p(a + l + bp^m)=m$ (resp. $\operatorname{ord}_p(a + l + bp^m)=n$). 
This proves Case I.

\bigskip

\fbox{Case II: $m=n$} 

Write $a+l=cp^m$, $k=l+bp^m$, and $a+k=(b+c)p^m=d p^{m+m'}$ with $p \nmid d$. 
We may assume $m' \ge 1$, since the case $m'=0$ can be handled by the same argument as in Case I. 
Then 
\begin{align} \label{numerator} 
\dfrac{B_{l+bp^m}}{A_{l+bp^m}} -\dfrac{B_l}{A_l} = \dfrac{c[1-(-1)^l B_p(a, a)^{-1}\frac{A^{(1)}_{bp^{m-1}}}{A_{bp^m+l}}] - dp^{m'} [1- (-1)^l B_p(a,a)^{-1}\frac1{A_l}]}{cdp^{m+m'}}. 
\end{align}
We claim that 
$$c[1-(-1)^l B_p(a, a)^{-1}\frac{A^{(1)}_{bp^{m-1}}}{A_{bp^m+l}}] - dp^{m'} [1- (-1)^l B_p(a,a)^{-1}\frac1{A_l}]\equiv 0 \pmod{p^{2m+m'}}. $$
First, we calculate $B_p(a, a)^{-1}A^{(1)}_{bp^{m-1}}/A_{bp^m+l}$. We have 
\begin{align*}
B_p(a, a)^{-1} \dfrac{A^{(1)}_{bp^{m-1}}}{A_{bp^m+l}}= B_p(a, a)^{-1}\dfrac{\{1\}_{l+bp^m} \{2a\}_{l+bp^m}}{\{a\}_{l+bp^m}^2} = B_p(a, a)^{-1}\dfrac{1}{A_l} \left(\dfrac{\{1+l\}_{bp^m}}{\{a+l\}_{bp^m}} \right)^2 \dfrac{\{2a+l\}_{bp^m}}{\{1+l\}_{bp^m}}. 
\end{align*}
As in the proof of \cite[Lemma 3.10, Case II (1)]{Wang}, we have 
\begin{align*}
\dfrac{\{1+l\}_{bp^m}}{\{a+l\}_{bp^m}} 
&\equiv
\dfrac{(-1)^l  \displaystyle \prod_{0 \le i \le l-1} (a + i -dp^{m+m'})}{l!} \pmod{p^{2m+m'}} \\
& \equiv 
\dfrac{(-1)^l}{l!} \left[\prod_{i=0}^{l-1} (a+i) -dp^{m+m'} (a)_l \sum_{j=0}^{l-1} \dfrac1{a+j} \right] \pmod{p^{2m+m'}} \\
& \equiv 
(-1)^l \left[\dfrac{(a)_l}{l!} + (-1)^ldp^{m+m'} \sum_{j=1}^{l} \dfrac1j \right] \pmod{p^{2m+m'}}. 
\end{align*}
Applying Lemma \ref{Beta_cong} to $k=l+bp^m$, we obtain 
\begin{align*} 
B_p(a, a)^{-1} \dfrac{\{2a+l\}_{bp^m}}{\{1+ l\}_{bp^m}}\equiv (-1)^{l} \dfrac{l!}{(2a)_l} \pmod{p^{2(m+m')}}.  
\end{align*}
Since we have $(a)_l/l! \equiv (-1)^l \pmod{p^n}$, we conclude that 
\begin{align} \label{cong:1}
B_p(a, a)^{-1} \dfrac{A^{(1)}_{bp^{m-1}}}{A_{bp^m+l}} \equiv (-1)^l \left(1+2dp^{m+m'} \sum_{j=1}^l \dfrac1j \right) \pmod{p^{2m+m'}}. 
\end{align}

Secondly, we calculate $B_p(a, a)^{-1}/A_l$. 
By \cite[Lemma 3.9]{Wang}, we have 
$$\left( \dfrac{l!}{(a)_l} \right)^2 \equiv 1+ 2cp^m(\psi_p(1+l) + \gamma_p)=1+ 2cp^m \sum_{j=1}^l \frac{1}j \pmod{p^{2m}}. $$
On the other hand, by Lemma \ref{Beta_cong}, we have 
$$B_p(a, a)^{-1} \dfrac{(2a)_l}{l!} \equiv (-1)^{l} \pmod{p^{2m}}. $$
Therefore, we conclude that 
\begin{align} \label{cong:2}
B_p(a, a)^{-1} \dfrac1{A_l} \equiv (-1)^l \left(1+2cp^m \sum_{j=1}^l \dfrac1j \right) \pmod{p^{2m}}. 
\end{align}
By \eqref{cong:1} and \eqref{cong:2}, the numerator in \eqref{numerator} satisfies
\begin{align*}
&c[1-(-1)^l B_p(a, a)^{-1} \dfrac{A^{(1)}_{bp^{m-1}}}{A_{bp^m+l}}] -dp^{m'} [1 - (-1)^l B_p(a, a)^{-1} \dfrac1{A_l}] \\
&=-b+(-1)^l \left(B_p(a, a)^{-1} \dfrac{dp^{m'}}{A_l} -cB_p(a, a)^{-1} \dfrac{A^{(1)}_{bp^{m-1}}}{A_{bp^m+l}} \right) \\
&\equiv -b + \left( dp^{m'}\left(1+2cp^m \sum_{j=1}^l \dfrac1j \right) -c\left(1+2dp^{m+m'} \sum_{j=1}^l \dfrac1j \right) \right) \pmod{p^{2m+m'}}\\
& = -b+b =0. 
\end{align*}
This proves Case II. 
\end{proof}

\begin{lem} \label{Lipschitz}
If $k \equiv k' \pmod{p^{s}}$, then 
$$\dfrac{B_k}{A_k} \equiv \dfrac{B_{k'}}{A_{k'}} \pmod{p^{s}}. $$
\end{lem}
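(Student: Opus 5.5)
By Lemma \ref{reduced} we may assume $c=1$. The plan is to split according to whether $k \equiv l \pmod p$, and to follow the pattern of \cite[Lemma 3.10]{Wang} and \cite[Lemma 4.6]{N2}.

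\textbf{Case $k \not\equiv l \pmod p$.} Here $k+a$ is a unit and $B_k/A_k = 1/(k+a)$. Then $k' \equiv k \pmod{p^s}$ gives
$$\frac{1}{k+a} \equiv \frac{1}{k'+a} \pmod{p^s},$$
and we are done.

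\textbf{Case $k \equiv l \pmod p$.} Now $k'\equiv l \pmod p$ as well. I would pass through the base point $l$ using Lemma \ref{lem:1}. Write $k=l+bp^{m}$ and $k'=l+b'p^{m'}$ with $p\nmid bb'$ (or $k=l$ itself). Lemma \ref{lem:1} gives
$$\frac{B_k}{A_k}\equiv \frac{B_l}{A_l} \pmod{p^{m}},\qquad \frac{B_{k'}}{A_{k'}}\equiv \frac{B_l}{A_l} \pmod{p^{m'}}.$$
If $\min(m,m')\ge s$, the claim follows at once. Otherwise $k\equiv k'\pmod{p^s}$ with $s>\min(m,m')$ forces $m=m'<s$, and then comparing through $l$ only gives modulus $p^m$, which is too weak.

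To fix this I would change the base point. Let $k_0 = k \bmod p^s$, viewed as the least nonnegative residue, so that $k_0\equiv l\pmod p$ and $k,k'\equiv k_0 \pmod{p^s}$. It then suffices to prove the following generalization of Lemma \ref{lem:1}:
$$\frac{B_{k_0+ep^s}}{A_{k_0+ep^s}}\equiv\frac{B_{k_0}}{A_{k_0}} \pmod{p^s}\quad\text{for } 0\le k_0<p^s \text{ and any } e\ge 0.$$
I would prove this by rerunning the computation of Lemma \ref{lem:1} with $l$ replaced by $k_0$. Write
$$1-(a+k)\frac{B_k}{A_k}=(-1)^lB_p(a,a)^{-1}\frac{A^{(1)}_{(k-l)/p}}{A_k},$$
and factor the right-hand side as the value at $k_0$ times
$$\left(\frac{\{1+k_0\}_{ep^s}}{\{a+k_0\}_{ep^s}}\right)^2\frac{\{2a+k_0\}_{ep^s}}{\{1+k_0\}_{ep^s}}.$$
Expand this product modulo $p^{2s}$ via \cite[Lemma 4.5]{N2}. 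In the resonant subcase, where $\operatorname{ord}_p(a+k)$ jumps, use Lemma \ref{Beta_cong} together with \cite[Lemma 3.9]{Wang}, exactly as in Case II.

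The main obstacle is the replacement for Lemma \ref{psi_p} at a general base point $k_0$. One needs
$$\frac{B_{k_0}}{A_{k_0}}\equiv 2\psi_p(a+k_0)-2\psi_p(1+k_0)\pmod{p^{\operatorname{ord}_p(a+k_0)}},$$
so that the right-hand side of the analogue of \eqref{eq:1} vanishes. I would try to get this by induction on $s$: apply the already established cases at smaller moduli, and use that $\psi_p(a+k)-\psi_p(1+k)$ is itself Lipschitz in $k$. The bookkeeping of the valuations $\operatorname{ord}_p(a+k)$ versus $s$ mirrors the split into Case I and Case II, and I expect this to be the delicate part.
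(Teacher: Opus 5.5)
Your reduction to $c=1$, the split-off case $k\not\equiv l \pmod p$, and the factorization of $1-(a+k)B_k/A_k$ at a base point $k_0$ are all fine. However, the proof is not complete. The congruence you call the main obstacle,
\[
B_{k_0}/A_{k_0}\equiv 2\psi_p(a+k_0)-2\psi_p(1+k_0)\pmod{p^{\operatorname{ord}_p(a+k_0)}},
\]
carries essentially all the content of the lemma, and the induction you sketch cannot deliver it.

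Put $n=\operatorname{ord}_p(a+l)$ and suppose $\nu=\operatorname{ord}_p(a+k_0)>n$.

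\begin{itemize}
\item Every $k\equiv k_0\pmod{p^{n+1}}$ then has the form $l+bp^n$ with $p\nmid b$. So Lemmas \ref{lem:1} and \ref{psi_p} determine $B_k/A_k$ at such points only modulo $p^n$.
\item Lipschitz congruences at moduli $p^{s'}$ with $s'>n$ only link such points to one another. Those with $s'\le n$ carry at most $p^n$ of information.
\end{itemize}

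So no combination of lower-level cases and the Lipschitz property of $\psi_p$ reaches precision $p^{\nu}$. This is not an edge case: under the hypothesis $[a]_i\ge 1$ one has $n=1$, while $k=[a]_0+[a]_1p+\dots+[a]_{\nu-1}p^{\nu-1}$ has $\operatorname{ord}_p(a+k)=\nu$ for every $\nu$.

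This is exactly the situation the paper isolates. Its Case II uses $k$ as base point only when $\operatorname{ord}_p(k+a)=\min\{m,n\}$, where the needed $\psi_p$-congruence does follow from Lemmas \ref{lem:1} and \ref{psi_p}. The case $m=n<\operatorname{ord}_p(k+a)$ (Case III) is handled by a separate explicit Pochhammer computation. Your pointer ``exactly as in Case II'' does not cover this either: Case II of Lemma \ref{lem:1} only proves a congruence modulo $p^m$, for base point $l$ and step $bp^m$ with $p\nmid b$.

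The missing ingredient is a direct evaluation, obtained by rerunning the proof of Lemma \ref{psi_p} at an arbitrary $k\equiv l\pmod p$ rather than by induction. Write $a+k=dp^{\nu}$ with $p\nmid d$. Then
\[
1-(a+k)\frac{B_k}{A_k}=(-1)^l\Bigl(B_p(a,a)^{-1}\frac{\{2a\}_k}{\{1\}_k}\Bigr)\Bigl(\frac{\{1\}_k}{\{a\}_k}\Bigr)^2 .
\]
\begin{itemize}
\item The first bracket is $\equiv(-1)^l\pmod{p^{2\nu}}$ by Lemma \ref{Beta_cong}, which is stated for any such $k$.
\item Since $\{a\}_k=\prod_{1\le j\le k,\ p\nmid j}(dp^{\nu}-j)$, one gets $\{1\}_k/\{a\}_k\equiv(-1)^{k-\lfloor k/p\rfloor}\bigl(1+(a+k)\widetilde\psi_p(k+1)\bigr)\pmod{p^{2\nu}}$.
\end{itemize}
Hence $B_k/A_k\equiv-2\widetilde\psi_p(k+1)\pmod{p^{\nu}}$. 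This is your claim, because $\psi_p(a+k)\equiv\psi_p(0)=-\gamma_p\pmod{p^{\nu}}$.

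With this, your plan closes and even simplifies.
\begin{itemize}
\item If $\operatorname{ord}_p(a+k_0)<s$, your expansion modulo $p^{2s}$ goes through as in Case I of Lemma \ref{lem:1}. The $\{2a+k_0\}$-factor contributes $\psi_p(1+k_0)-\psi_p(2a+k_0)\equiv 0$ modulo $p^{\operatorname{ord}_p(a+k_0)}$, because $2a+k_0\equiv -k_0$.
\item If $\operatorname{ord}_p(a+k_0)\ge s$, then $\operatorname{ord}_p(a+k)\ge s$ as well. The two ratios are congruent modulo $p^s$ to $-2\widetilde\psi_p(k_0+1)$ and $-2\widetilde\psi_p(k+1)$ respectively. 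These agree because $\widetilde\psi_p$ is $1$-Lipschitz for odd $p$.
\end{itemize}
So no separate treatment of the resonant case, the paper's Case III, is needed.
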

\begin{proof}
If $k \not \equiv l \pmod{p}$, then 
$$\dfrac{B_k}{A_k}=\dfrac{1}{k+a}, $$
so the result follows. 
Suppose $k \equiv l \pmod{p}$. 
We write $a+l=cp^n$, $k=l+bp^m$ with $p \nmid bc$. 
It is enough to show the lemma for the case $k'=k+p^{s}$ for any $s \in \Z_{\ge 1}$.  
\bigskip

\fbox{Case I: $s \le m$} 

By Lemma \ref{lem:1}, both ratios are congruent to $B_l/A_l \pmod{p^{s}}$. Hence, the assertion follows. 
\\ 

\fbox{Case II: $s > m$, and either $m \neq n$, or $m =n$ with $\operatorname{ord}_p(k+a)=m$} 

We have 
\begin{align*}
&1-(k'+a) \dfrac{B_{k'}}{A_{k'}} \\
&=(-1)^l B_p(a, a)^{-1}\dfrac{A^{(1)}_{\lfloor \frac{k'}p \rfloor}}{A_{k'}} \\
&=(-1)^l B_p(a, a)^{-1}\dfrac{\{1\}_{l+bp^m + p^{s}}\{2a\}_{l+bp^m + p^{s}}}{{\{a\}^2_{l+bp^m + p^{s }}}} \\
&=(-1)^l B_p(a, a)^{-1}\dfrac{\{1\}_{l+bp^m }\{2a\}_{l+bp^m }}{{\{a\}^2_{l+bp^m}}} \dfrac{\{1+l+bp^m\}_{p^{s } }\{2a+l+bp^m\}_{p^{s}}}{{\{a+l+bp^m\}^2_{p^{s}}}}\\
& \equiv \left(1-(k+a) \dfrac{B_k}{A_k} \right) (1+p^{s} (\psi_p (1+l+bp^m) -\psi_p(a+l+bp^m)))^2 \\
& \times (1+p^{s}(\psi_p (2a+l+bp^m) -\psi_p(1+l+bp^m))) \pmod{p^{2s}} \\
& \equiv \left(1-(k+a) \dfrac{B_k}{A_k} \right)\left(1-p^{s}\dfrac{B_l}{A_l} \right) \pmod{p^{n^*+s}},  
\end{align*}
where $n^*=\min \{n, m\}$. 
Therefore, it follows that 
\begin{align*}
&1-(k+a+p^{s}) \dfrac{B_{k'}}{A_{k'}} 
\equiv 1- (k+a) \dfrac{B_{k}}{A_{k}} -p^{s} \dfrac{B_l}{A_l} \pmod{p^{n^*+s}},  
\end{align*}
hence we have 
\begin{align*}
(k+a) \left(\dfrac{B_k}{A_k} -\dfrac{B_{k'}}{A_{k'}} \right) \equiv p^{s}  \left(\dfrac{B_{k'}}{A_{k'}} -\dfrac{B_{l}}{A_{l}} \right) \equiv 0 \pmod{p^{n^*+s}}. 
\end{align*}
By the assumption in Case II, we have $\operatorname{ord}_p(k+a)=n^*$. Hence, we have 
$$\dfrac{B_k}{A_k} \equiv \dfrac{B_{k'}}{A_{k'}} \pmod{p^{s}}. $$
This proves Case II. 

\bigskip 

\fbox{Case III: $s > m$ and $m =n$ with $\operatorname{ord}_p{(k+a)} > m$} 

In this case, we write $a+l=cp^m$, $k=l+bp^m$, $k+a=(b+c)p^m=dp^{m'+m}$ and $k'=k+p^{s}$ with $s \ge 1$, $p \nmid bcd$.  
Then, 
\begin{align} \label{numerator2}
\dfrac{B_{k'}}{A_{k'}} - \dfrac{B_k}{A_k} = \dfrac{-p^{s} +(-1)^l B_p(a, a)^{-1} [(dp^{m+m'} + p^{s}) \frac{A^{(1)}_{bp^{m-1}}}{A_{l+bp^m}} - (dp^{m+m'}) \frac{A^{(1)}_{bp^{m-1}+p^{s-1}}}{A_{l+bp^m+p^{s}}}]}{(dp^{m+m'} +p^{s}) dp^{m+m'}}. 
\end{align}
We claim that the numerator of \eqref{numerator2} is congruent to $0$ modulo $(dp^{m+m'}+p^{s})p^{m+m'} p^{s}$. 
For simplicity, we denote the numerator of \eqref{numerator2} by $(*)$.

First, we consider the case $s \le m+m'$. 
As in the proof of \cite[Lemma 3.5, Case III (1)]{Wang}, we have 
\begin{align*}
\dfrac{\{1\}_{l+bp^m+p^{s}}}{\{a\}_{l+bp^m+p^{s}}} 
\equiv 
(-1)^l \left[1+(dp^{m'+m}+p^{s}) (\psi_p(1+l+bp^m+p^{s}) + \gamma_p) \right] \pmod{(dp^{m+m'}+p^{s})p^{s}}. 
\end{align*}
By Lemma \ref{Beta_cong}, we have 
$$B_p(a, a)^{-1} \dfrac{\{2a\}_{l+bp^m+p^{s}}}{\{1\}_{l+bp^m+p^{s}}} \equiv (-1)^l \pmod{(dp^{m+m'}+p^{s})^2}. $$
We conclude that 
\begin{align*}
&B_p(a, a)^{-1}\dfrac{A^{(1)}_{bp^{m-1}+p^{s-1}}}{A_{l+bp^m+p^{s}}} \\
&\quad \equiv (-1)^l \left[ 1+2(dp^{m+m'}+p^{s}) (\psi_p(1+l+bp^m+p^{s}) + \gamma_p) \right] \pmod{(dp^{m+m'}+p^{s})p^{s}}. 
\end{align*}
On the other hand, 
As in the proof of \cite[Lemma 3.5, Case III (1)]{Wang}, we have 
\begin{align*}
\dfrac{\{1\}_{l+bp^m}}{\{a\}_{l+bp^m}} \equiv (-1)^l \left[1+dp^{m+m'} (\psi_p(1+l+bp^m) + \gamma_p) \right] \pmod{p^{m+m'+s}}. 
\end{align*}
Using Lemma \ref{Beta_cong} 
\begin{align*}
B_p(a, a)^{-1}\dfrac{\{2a\}_{l+bp^m}}{\{1\}_{l+bp^m}} \equiv (-1)^l \pmod{p^{m+m'+s}}, 
\end{align*}
we obtain 
\begin{align*}
B_p(a, a)^{-1}\dfrac{A^{(1)}_{bp^{m-1}}}{A_{l+bp^m}} \equiv (-1)^l \left[1+2dp^{m+m'} (\psi_p(1+l+bp^m) + \gamma_p) \right]\pmod{p^{m+m'+s}}.  
\end{align*}
Hence, $(*)$ is congruent to 
\begin{align*}
-p^{s}+ \left(p^{s} +2(dp^{m+m'}+p^{s}) dp^{m+m'} \left( \psi_p(1+l+bp^m) - \psi_p(1+l+bp^m+p^{s}) \right) \right) 
\end{align*}
modulo $(dp^{m+m'}+p^{s})p^{m+m'+s}$. 
Since 
$$\psi_p(1+l+bp^m) \equiv \psi_p(1+l+bp^m+p^{s}) \pmod{p^{s}}, $$
$(*)$ is congruent to $-p^{s}+p^{s}=0$. 
This proves the first case.

Secondly, we consider the case $s > m+m'$. 
As in the proof of \cite[Lemma 3.5, Case III (2)]{Wang}, we have 
\begin{align*}
&\dfrac{\{1\}_{l+bp^m+p^{s}}}{\{a\}_{l+bp^m+p^{s}}} \equiv \dfrac{\{1\}_{l+bp^m}}{\{a\}_{l+bp^m}} \left(1+ p^{s} \left(\psi_p(1+l+bp^m) + \gamma_p \right) \right) \pmod{p^{m+m'+s}}. 
\end{align*}
Similarly, we have 
\begin{align*}
&\dfrac{\{2a\}_{l+bp^m+p^{s}}}{\{1\}_{l+bp^m+p^{s}}} \\
& \equiv \dfrac{\{2a\}_{l+bp^m}}{\{1\}_{l+bp^m}}  \left(1+ p^{s} \left(\psi_p(2a+l+bp^m) + \gamma_p \right) \right)\left(1- p^{s} \left(\psi_p(1+l+bp^m) + \gamma_p \right) \right) \pmod{p^{2{s}}}. 
\end{align*} 
Since we have 
\begin{align*}
\psi_p(2a+l+bp^m) = \psi_p(2dp^{m+m'} -k) \equiv \psi_p(-k) \pmod{p^{m+m'}}, 
\end{align*}
and by \cite[Theorem 2.6 (2)]{As}, 
\begin{align*}
\psi_p(1+l+bp^m) = \psi_p(1+k) = \psi_p(-k), 
\end{align*}
we obtain 
\begin{align*}
\dfrac{\{2a\}_{l+bp^m+p^{s}}}{\{1\}_{l+bp^m+p^{s}}} \equiv \dfrac{\{2a\}_{l+bp^m}}{\{1\}_{l+bp^m}} \pmod{p^{m+m'+s}}. 
\end{align*}
Substituting this result into $(*)$, we have 
\begin{align*}
(*) \equiv -p^{s} +(-1)^l B_p(a, a)^{-1}\dfrac{A^{(1)}_{bp^{m-1}}}{A_{l+bp^m}} p^{s} \left( 1- 2dp^{m+m'} \left( \psi_p(1+l+bp^m) + \gamma_p \right) \right) \pmod{p^{2m+s+2m'}}. 
\end{align*}
As in the proof of \cite[Lemma 3.5, Case III (2)]{Wang}, we obtain 
\begin{align*}
\dfrac{\{1\}_{l+bp^m}}{\{a\}_{l+bp^m}} \equiv (-1)^l\left(1+dp^{m+m'} \left( \psi_p(1+l+bp^m) + \gamma_p \right) \right) \pmod{p^{2(m+m')}}
\end{align*}
and using Lemma \ref{Beta_cong}, 
\begin{align*}
B_p(a, a)^{-1} \dfrac{\{2a\}_{l+bp^m}}{\{1\}_{l+bp^m}} \equiv (-1)^l \pmod{p^{2(m+m')}}, 
\end{align*}
hence we have 
\begin{align*}
(*) &\equiv -p^{s} + p^{s} \left(1+2dp^{m+m'} \left( \psi_p(1+l+bp^m)+\gamma_p \right) \right) \left(1 -2dp^{m+m'} \left( \psi_p(1+l+bp^m)+\gamma_p \right) \right) \\
& \equiv 0 \pmod{p^{2m+s+2m'}}. 
\end{align*}
Hence, again we obtain 
$$\dfrac{B_k}{A_k} \equiv \dfrac{B_{k'}}{A_{k'}} \pmod{p^s}. $$
Now, we prove that $B_k/A_k \in W$. For $k \not \equiv l \pmod{p}$, $B_k/A_k =1/(k+a) \in W$. 
For $k \equiv l \pmod{p}$, it follows from $A_l \in \Z_p^{\times}$ and 
$$\dfrac{B_k}{A_k} \equiv \dfrac{B_l}{A_l} \pmod{p}. $$ 
This completes the proof. 
\end{proof}

\subsection{Proof of Theorem \ref{main:1}}
We finish the proof of Theorem \ref{main:1}. 
Put 
$$S_m :=\sum_{i+j=m} A_{i+p^n} B_j - A_i B_{j+p^n}.  $$
We note that the statement of Theorem \ref{main:1} is equivalent to 
$$S_m \equiv 0 \pmod{p^n}$$
for all $m \ge 0$.

\begin{lem} \label{A_cong}
We have 
$$S_m \equiv \sum_{i+j=m} (A_{i+p^n} A_j-A_i A_{j+p^n}) \dfrac{B_j}{A_j} \pmod{p^n}. $$
\end{lem}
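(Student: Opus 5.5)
We write $S_m = \sum_{i+j=m} A_{i+p^n}B_j - \sum_{i+j=m} A_i B_{j+p^n}$ and treat the two sums separately. In the first sum we simply write $B_j = A_j \cdot (B_j/A_j)$, which is legitimate since $A_j \neq 0$ and $B_j/A_j \in W$ by the last part of Lemma \ref{Lipschitz}. In the second sum we write $B_{j+p^n} = A_{j+p^n}\cdot (B_{j+p^n}/A_{j+p^n})$. Then Lemma \ref{Lipschitz} with $k=j$, $k'=j+p^n$, $s=n$ gives
$$\dfrac{B_{j+p^n}}{A_{j+p^n}} \equiv \dfrac{B_j}{A_j} \pmod{p^n}.$$
Since $A_i, A_{j+p^n} \in \Z_p$ (Theorem \ref{theorem:1} applies under the two conditions of Section \ref{defFhat}), multiplying this congruence by $A_iA_{j+p^n}$ preserves it modulo $p^n$. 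This yields
$$A_i B_{j+p^n} \equiv A_i A_{j+p^n}\dfrac{B_j}{A_j} \pmod{p^n}.$$

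Summing over $i+j=m$ and combining with the first sum then gives exactly
$$S_m \equiv \sum_{i+j=m}(A_{i+p^n}A_j - A_iA_{j+p^n})\dfrac{B_j}{A_j} \pmod{p^n},$$
which is the claim.

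The only point to check carefully is integrality. The congruence in Lemma \ref{Lipschitz} is a congruence in $W$, so we need $B_j/A_j \in W$ for all $j$; this is established at the end of its proof, using $A_l\in\Z_p^\times$ and the reduction to the case $j\not\equiv l \pmod p$. We also need that no $A_j$ vanishes. This holds because each $A_j$ is a product of Pochhammer ratios with $a, 2a \notin \Z_{\le 0}$, so $A_j \neq 0$. The reduction of Lemma \ref{reduced} lets us assume $c=1$ when invoking Lemma \ref{Lipschitz}, but that lemma's conclusion holds for general $c$ anyway. Given these facts the argument is a direct substitution, and no further obstacle is expected.
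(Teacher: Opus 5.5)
Your proof is correct and is essentially the paper's argument: write $A_iB_{j+p^n}=A_iA_{j+p^n}\cdot B_{j+p^n}/A_{j+p^n}$, replace the ratio by $B_j/A_j$ modulo $p^n$ using Lemma \ref{Lipschitz}, and recombine the two sums. Your additional checks ($A_j\neq 0$, $A_i\in\Z_p$, $B_j/A_j\in W$, and the $c$-independence via Lemma \ref{reduced}) are sound and only make explicit what the paper uses implicitly.
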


\begin{proof}
We compute 
\begin{align*}
S_m & = \sum_{i+j=m} A_{i+p^n} B_j -A_i A_{j+p^n} \dfrac{B_{j+p^n}}{A_{j+p^n}} \\
& \overset{(*)}{\equiv} \sum_{i+j=m} A_{i+p^n} B_j -A_i A_{j+p^n} \dfrac{B_{j}}{A_{j}} \pmod{p^n} \\
& = \sum_{i+j=m} (A_{i+p^n} A_j -A_i A_{j+p^n}) \dfrac{B_{j}}{A_{j}},  
\end{align*}
where $(*)$ follows from Lemma \ref{Lipschitz}. 
\end{proof}

\begin{lem}[{\cite[Lemma 4.8]{N2}}] \label{keylem}
For all $m, k, n \in \Z_{\ge 0}$ and $0 \le l \le n$, we have 
$$\sum_{\substack{i+j=m \\ i \equiv k \mod{p^{n-l}}}}(A_i A_{j+p^n} - A_j A_{i+p^n}) \equiv 0 \pmod{p^{l+1}}. $$
\end{lem}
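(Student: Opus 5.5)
This lemma is \cite[Lemma 4.8]{N2}, so strictly it can be quoted. To make the argument self-contained for $A_k$ the coefficients of $F_{a,a;2a}(t)$, I would follow the Dwork-style induction used in \cite{N2} and \cite{Wang}. The only input is the multiplicative structure of $A_k$. Write $i=u+pv$ with $0\le u<p$, and put $\mu=l$ (the residue of $-a$ mod $p$). By \cite[Lemma 3.6]{As} (the ratio formula already used in the proof of Lemma \ref{integral}), the quotient
\[
A_{u+pv}\big/\bigl(A_u A^{(1)}_v\bigr)
\]
is congruent to $1$ modulo $p^{\operatorname{ord}}$ in the appropriate Dwork sense. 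This gives Dwork's factorization congruence
\[
\frac{A_{u+pv}}{A^{(1)}_{v}}\equiv \frac{A_{u+pv'}}{A^{(1)}_{v'}}\cdot(\text{unit depending only on }u) \pmod{p^{s+1}}
\quad\text{whenever } v\equiv v' \pmod{p^s}.
\]
Conditions (i),(ii) of \S\ref{defFhat} guarantee that the $b$-parameter $2a$ satisfies the hypotheses of Theorem \ref{theorem:1}. They also guarantee that $A_u\in\Z_p^\times$ for $u\le l$ and that $A_u\equiv 0 \pmod p$ for $l<u<p$. This is exactly the setting of Dwork's ``$A_{u+pv}\approx A_uA^{(1)}_v$'' axioms.

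I would argue by induction on $n$, for all Dwork primes $a^{(i)}$ simultaneously; by the condition $a^{(r)}=a$ the hypotheses are preserved under $a\mapsto a'$. The case $l=0$ is trivial, since we only need the congruence mod $p$. For the inductive step, write $i=u+pv$ and $j=u'+pv'$ with $u+u'\equiv m \pmod p$, and split the sum according to the residue pair $(u,u')$. Inside each class, shifting $j$ by $p^n$ shifts $v'$ by $p^{n-1}$. Using the factorization congruence, the summand
\[
A_iA_{j+p^n}-A_jA_{i+p^n}
\]
becomes, modulo $p^{l+1}$, the product of the unit factors for $u,u'$ with
\[
A^{(1)}_vA^{(1)}_{v'+p^{n-1}}-A^{(1)}_{v'}A^{(1)}_{v+p^{n-1}},
\]
plus error terms. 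Each error term carries a factor $p^{s+1}$ times an expression of the same antisymmetric shape at a lower level. The restriction $i\equiv k \pmod{p^{n-l}}$ becomes $v\equiv k' \pmod{p^{n-1-(l-1)}}$ for fixed $u$. So the main term is a sum of the same type for $a'$, with parameters $(n-1,l)$ or $(n-1,l-1)$. The induction hypothesis then gives divisibility by $p^{l}$, and the extra factor $p$ is supplied as in Dwork's original argument: one pairs $(u,u')$ with $(u',u)$ and uses that the unit factors agree mod $p$.

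The main obstacle is the bookkeeping of the error terms. One must show that the discrepancy between $A_{u+pv}$ and $A_uA^{(1)}_v$ (mod $p^{s+1}$ when $v$ varies in a class mod $p^s$) combines with the congruence condition $i\equiv k \pmod{p^{n-l}}$ to lose no powers of $p$. I would handle this as Dwork does: prove the stronger statement with an auxiliary weight function $g(i)$ that is Lipschitz in the sense $g(i)\equiv g(i') \pmod{p^s}$ for $i\equiv i' \pmod{p^s}$. In the inductive step the unit factors are then absorbed into this weight. The congruences needed for this are \cite[Corollary 4.3]{N2} (used already in Lemma \ref{reduced}) and the $p$-adic Gamma congruence \eqref{p_adic_gamma}.
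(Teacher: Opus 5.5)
The paper does not prove this lemma; it simply quotes \cite[Lemma 4.8]{N2}, so your opening remark is the paper's route. Your self-contained sketch, however, has two genuine gaps.

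\textbf{The inductive step.} For $l=n$ the sum runs over all $i+j=m$ and vanishes identically by antisymmetry, so assume $l<n$. Then $i\equiv k \pmod{p^{n-l}}$ fixes a \emph{single} residue pair $u\equiv k$, $u'\equiv m-k \pmod p$. It also translates into $v\equiv k' \pmod{p^{n-1-l}}$, not $\pmod{p^{n-l}}$. The reduced sum is therefore of type $(n-1,l)$ for $a'$, so no factor of $p$ is missing. Your remedy of pairing $(u,u')$ with $(u',u)$ is unavailable in any case, because $(u',u)$ does not occur in the sum.

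Granting the factorization input, the step works as follows. Put $\phi(u,v)=A_{u+pv}/A^{(1)}_v$, $M=v+v'$ and $w(v)=\phi(u,v)\phi(u',M-v)$. The summand is congruent modulo $p^n$ to $w(v)\bigl(A^{(1)}_vA^{(1)}_{v'+p^{n-1}}-A^{(1)}_{v'}A^{(1)}_{v+p^{n-1}}\bigr)$. Suppose $w(v_1)\equiv w(v_2)\pmod{p^{t+1}}$ whenever $v_1\equiv v_2\pmod{p^t}$. Then you telescope over the classes modulo $p^{n-1},\dots,p^{n-1-l}$, using the \emph{unweighted} statement for $a'$ with parameters $(n-1,0),\dots,(n-1,l)$. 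Each telescoping error is divisible by $p^n$, and the last term by $p^{l+1}$.

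This gain of one power of $p$ is essential. A weighted version with weights that are merely Lipschitz in your sense cannot serve as the inductive statement, because it already fails for $l=n$. For $n=0$, $m=1$ it reads $(g(0)-g(1))(A_0A_2-A_1^2)$, and $A_0A_2-A_1^2=a(1+a-a^2)/(4(2a+1))$ is a unit for $p=7$, $a=1/3$.

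\textbf{The factorization input.} The ``Dwork axioms'' you invoke are integrality of $A_{u+pv}/A^{(1)}_v$ together with the gain-one congruence. Both fail here for $2l<u<p$. From the definitions one finds
\[
\frac{A_{u+pv}}{A^{(1)}_v}=p\,\frac{(a'+v)^2}{2a'+v}\cdot\frac{\{a\}_{u+pv}^2}{\{2a\}_{u+pv}\{1\}_{u+pv}},
\]
which has negative valuation as soon as $\operatorname{ord}_p(2a'+v)\ge 2$. For example, take $p=7$ and $a=a'=1/3$, so $l=2$. Then $\operatorname{ord}_7A_{32}=4$ and $\operatorname{ord}_7A_{229}=3$, so $A_{5+7\cdot 32}/A^{(1)}_{32}$ has valuation $-1$. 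On the other hand $A_{5+7\cdot 4}/A^{(1)}_{4}$ is a unit, although $32\equiv 4\pmod 7$. The congruence \cite[Corollary 4.3]{N2} used in Lemma \ref{reduced} only concerns the residue $u=l$, so it does not help here.

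These denominators come from the denominator parameter $2a$. They are compensated by extra divisibility of $A^{(1)}_v$: one checks $\operatorname{ord}_pA^{(1)}_v\ge 2e$ when $\operatorname{ord}_p(2a'+v)=e$. The induction must be set up to track this compensation, for instance with an auxiliary divisibility function in the style of Dwork's formal congruences. Your proposal does not do this, so as written it does not establish the lemma. Quoting \cite[Lemma 4.8]{N2}, as the paper does, is the safe route.
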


\begin{proof}[Proof of Theorem \ref{main:1}]
We will show 
$$S_m \equiv 0 \pmod{p^n}$$
for all $m \ge 0$. 
Put $q_k=B_k/A_k$. 
By Lemmas \ref{A_cong} and \ref{Lipschitz}, we have
$$S_m \equiv \sum_{k=0}^{p^n-1}q_k \overbrace{\sum_{\substack{i+j=m\\j\equiv k \ {\rm mod} \ p^n}} (A_{i+p^n}A_j-A_iA_{j+p^n})}^{(*)} \pmod{p^n}.$$
It follows from Lemma \ref{keylem} that  $(*)$ is zero modulo ${p}$. Therefore, again by Lemma \ref{Lipschitz}, one can rewrite  
$$S_m \equiv \sum_{k=0}^{p^{n-1}-1}q_k \overbrace{\sum_{\substack{i+j=m\\j\equiv k \ {\rm mod} \ p^{n-1}}} (A_{i+p^n}A_j-A_iA_{j+p^n})}^{(**)} \pmod{p^n}.$$
It follows from Lemma \ref{keylem} that  $(**)$ is zero modulo ${p^2}$. Therefore, again by Lemma \ref{Lipschitz}, one can rewrite  
 $$S_m \equiv \sum_{k=0}^{p^{n-2}-1}q_k \sum_{\substack{i+j=m\\j\equiv k \ {\rm mod} \ p^{n-2}}} (A_{i+p^n}A_j-A_iA_{j+p^n}) \pmod{p^n}.$$
 Continuing the same discussion, one finally obtains 
$$S_m \equiv \sum_{i+j=m} (A_{i+p^n}A_j-A_iA_{j+p^n}) =0 \pmod{p^n}, $$
which finishes the proof. 
\end{proof}

\section{syntomic regulator of the Hesse cubic curve} \label{syntomic_reg_Hesse}
Note that $a=1/3$ satisfies the conditions in Section \ref{defFhat} if and only if $p \equiv 1 \pmod{3}$. 
Hence, we suppose that $p \equiv 1 \pmod{3}$ in this section.    
Let $W=W(\F)$ be the Witt ring of $\F$ and $K=\operatorname{Frac}(W)$ the fractional field of $W$.

\subsection{Hesse cubic curve}
Let $f_{K} \colon \overline{X}_K \to \mathbb{P}_{K}^1$ be the Hesse family of cubic curves over $K$, 
whose fiber $X_{t, K}:=f_{K}^{-1}(t)$ over $t \in \mathbb{A}^1_{K}$ is defined by 
$$x_0^3+y_0^3+z_0^3-3tx_0y_0z_0=0. $$
This is smooth over $S_{K}:= \mathbb{A}^1_{K} \setminus \mu_3$, where $\mu_3 \subset K$ denotes the group of cubic roots of unity. We fix a primitive cubic root of unity $\zeta \in \mu_3$. 
The affine equation is written as 
$$x^3+y^3+1-3txy=0 \qquad (x=x_0/z_0, y=y_0/z_0). $$
There is a projective flat morphism 
$$\overline{f} \colon \overline{Y} \to \mathbb{P}^1_{W}$$
over $W$ extending the fibration $f_{K}$ such that the following conditions hold: 
\begin{itemize}
\item $\overline{Y}$ is a smooth projective scheme over $W$. 
\item The singular fiber $\of^{-1}(t)$ $(t \in \{1, \zeta, \zeta^2, \infty\})$ is a N\'eron $3$-gon. 
\end{itemize}
Put $S:=\mathbb{P}^1_{W} \setminus \{1, \zeta, \zeta^2, \infty\}$, $X:=\of^{-1}(S)$ and $X_K := X \times_{W} K$.

\subsection{Gauss-Manin connection} 
\begin{dfn} \label{diff_form}
We define differential forms on $X$ by 
\begin{align*}
&\omega_t = \dfrac{dy}{x^2-ty} = \dfrac{-dx}{y^2-tx}, \\
&\eta_t= xy \omega_t = \dfrac{xydy}{x^2-ty} = -\dfrac{xydx}{y^2-tx}. 
\end{align*}
\end{dfn}
Since $\omega_t$ (resp. $\eta_t$) is of the first (resp. second) kind, they define cohomology classes, and we denote them by the same letter.

Put 
\begin{align*}
&\widetilde{\omega}_t := \dfrac1{F_{\frac13, \frac13; \frac23}(t^3)} \omega_t, \\
&\widetilde{\eta}_t := (-t^2 F_{\frac13, \frac13; \frac23}(t^3)+(1-t^3) [F_{\frac13, \frac13; \frac23}(t^3)]') \omega_t + F_{\frac13, \frac13; \frac23}(t^3) \eta_t, 
\end{align*}
where $[F_{\frac13, \frac13; \frac23}(t^3)]'$ denotes $\frac{d}{dt}F_{\frac13, \frac13; \frac23}(t^3)$. 

\begin{prop} \label{GM}
Let $\nabla \colon H^1_{\rm dR} (X_{K}/S_{K}) \to \mathscr{O}(S_{K}) dt \otimes H^1_{\rm dR} (X_{K}/S_{K}) $ be the Gauss-Manin connection. 
It naturally extends on $K[[t]] \otimes_{\mathscr{O}(S_{K})} H^1_{\rm dR} (X_{K}/S_{K})$ which we also write by $\nabla$. Then 
\begin{align}
&\nabla
\begin{pmatrix}
\omega_t& \eta_t
\end{pmatrix}
=\dfrac{dt}{1-t^3}
\otimes 
\begin{pmatrix}
\omega_t & \eta_t
\end{pmatrix}
\begin{pmatrix}
t^2 & t \\
-1& -t^2
\end{pmatrix}, \label{GM1} \\
&\nabla
\begin{pmatrix}
\widetilde{\omega}_t& \widetilde{\eta}_t
\end{pmatrix}
=dt
\otimes 
\begin{pmatrix}
\widetilde{\omega}_t & \widetilde{\eta}_t
\end{pmatrix}
\begin{pmatrix}
0 & 0 \\
-(1-t^3)^{-1}F_{\frac13, \frac13; \frac23}(t^3)^{-2} & 0
\end{pmatrix}. \label{GM2}
\end{align}
\end{prop}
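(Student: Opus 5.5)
The formula \eqref{GM2} is a formal consequence of \eqref{GM1} together with the hypergeometric differential equation for $\varphi(t):=F_{\frac13,\frac13;\frac23}(t^3)$. So the plan is to establish \eqref{GM1} geometrically and then derive \eqref{GM2} by algebra.

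\emph{Step 1: proof of \eqref{GM1}.} Write $P=x^3+y^3+1-3txy$, so that $\partial_xP=3(x^2-ty)$ and $\partial_yP=3(y^2-tx)$. Then $\omega_t=3\operatorname{Res}(dx\wedge dy/P)$ and $\eta_t=3\operatorname{Res}(xy\,dx\wedge dy/P)$, and the two expressions given in Definition \ref{diff_form} agree because $dP=0$ on the curve. Differentiating under the residue gives
$$\nabla_{d/dt}\,\omega_t=9\operatorname{Res}\bigl(xy\,dx\wedge dy/P^2\bigr),\qquad \nabla_{d/dt}\,\eta_t=9\operatorname{Res}\bigl(x^2y^2\,dx\wedge dy/P^2\bigr).$$
I would then reduce the pole order by Griffiths--Dwork reduction in the Jacobian ring. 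Concretely, I would write $(1-t^3)xy$ and $(1-t^3)x^2y^2$ in the form $A\,\partial_xP+B\,\partial_yP+C\,P+(\text{a combination of }t^2,\,t,\,1,\,xy)$ with polynomials $A,B,C$. The identity
$$\frac{A\,\partial_xP+B\,\partial_yP}{P^2}\,dx\wedge dy\equiv\frac{\partial_xA+\partial_yB}{P}\,dx\wedge dy \pmod{\text{exact forms}}$$
then converts everything to residues of $dx\wedge dy/P$ and $xy\,dx\wedge dy/P$, that is, to combinations of $\omega_t$ and $\eta_t$. The matrix should come out as $\frac{1}{1-t^3}\begin{pmatrix}t^2&t\\-1&-t^2\end{pmatrix}$.

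This reduction is the main obstacle. I would find $A,B,C$ by undetermined coefficients, using the low-degree monomial basis $x^iy^j$ with $i,j\le2$ of the Jacobian ring. The factor $1-t^3$ should appear as the discriminant, which serves as a check. As an independent sanity check on the first column, the local period $\int\omega_t\sim\varphi(t)$ must satisfy the ODE that Step 2 requires.

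\emph{Step 2: proof of \eqref{GM2}.} By the Leibniz rule and \eqref{GM1},
$$\nabla\widetilde\omega_t=\Bigl(-\frac{\varphi'}{\varphi^2}\,\omega_t+\frac{t^2\omega_t-\eta_t}{(1-t^3)\varphi}\Bigr)dt.$$
Substituting $\eta_t=\varphi^{-1}\bigl(\widetilde\eta_t-(-t^2\varphi+(1-t^3)\varphi')\omega_t\bigr)$, the $\omega_t$-terms cancel, and we are left with $-(1-t^3)^{-1}\varphi^{-2}\,\widetilde\eta_t\,dt$, which is the first column.

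For the second column, I would compute $\nabla\widetilde\eta_t$. Its $\eta_t$-coefficient is
$$-\frac{-t^2\varphi+(1-t^3)\varphi'}{1-t^3}+\varphi'-\frac{t^2\varphi}{1-t^3}=0$$
identically. Its $\omega_t$-coefficient simplifies to $(1-t^3)\varphi''-3t^2\varphi'-t\varphi$. So it remains to prove
$$(1-t^3)\varphi''-3t^2\varphi'-t\varphi=0.$$
For this, put $z=t^3$ and $F(z)=F_{\frac13,\frac13;\frac23}(z)$, which satisfies
$$z(1-z)F''+\bigl(\tfrac23-\tfrac53z\bigr)F'-\tfrac19F=0.$$
Using $\varphi'=3t^2F'$ and $\varphi''=6tF'+9t^4F''$, the left side becomes
$$9t\Bigl[z(1-z)F''+\bigl(\tfrac23-\tfrac53z\bigr)F'-\tfrac19F\Bigr]=0.$$
This finishes the proof, and extending $\nabla$ to $K[[t]]\otimes H^1_{\rm dR}$ is automatic by the Leibniz rule.
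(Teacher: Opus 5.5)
Your Step 2 is the paper's argument. The paper also deduces \eqref{GM2} from \eqref{GM1} together with the Picard--Fuchs equation $\bigl((1-t^3)\frac{d^2}{dt^2}-3t^2\frac{d}{dt}-t\bigr)F_{\frac13,\frac13;\frac23}(t^3)=0$. Your coefficient computations are correct, and so is the reduction of this equation to the Gauss equation in $z=t^3$.

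The real difference is \eqref{GM1}. The paper does not prove it but cites \cite[Corollary 2.5]{N}, whereas you propose a direct Griffiths--Dwork reduction. That route works, and more cleanly than your sketch suggests: $1-t^3$ lies in the ideal $(P,P_x,P_y)$, so no leftover pole-order-two term is needed. Explicitly,
\begin{align*}
&xP_x+yP_y=3P-3+3txy,\qquad x^2y^2-t^2xy=\tfrac13y^2P_x+\tfrac t3\,yP_y,\\
&1-t^3=AP_x+BP_y+CP,\qquad C=1-t^3+txy,\\
&A=\tfrac{t^2}{3}y^2+\tfrac13x(t^3-1-txy),\qquad B=\tfrac{t^3}{3}y+\tfrac13y(t^3-1-txy),
\end{align*}
with $A_x+B_y+C=\frac13(1-txy)$. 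Modulo exact forms these identities give
\begin{itemize}
\item $(1-t^3)\,dx\wedge dy/P^2\equiv(1-txy)\,dx\wedge dy/(3P)$,
\item $t\,xy\,dx\wedge dy/P^2\equiv(P^{-2}-\frac13P^{-1})\,dx\wedge dy$,
\item $x^2y^2\,dx\wedge dy/P^2\equiv(t^2xy\,P^{-2}+\frac t3P^{-1})\,dx\wedge dy$.
\end{itemize}
Now take residues, using $3\operatorname{Res}(dx\wedge dy/P)=\omega_t$ and $3\operatorname{Res}(xy\,dx\wedge dy/P)=\eta_t$. This yields $\nabla_{d/dt}\omega_t=\frac{t^2\omega_t-\eta_t}{1-t^3}$ and $\nabla_{d/dt}\eta_t=t^2\nabla_{d/dt}\omega_t+t\omega_t=\frac{t\omega_t-t^2\eta_t}{1-t^3}$, which is \eqref{GM1}.

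Two small corrections to your sketch:
\begin{itemize}
\item The affine Jacobian ring $\mathbb{Q}(t)[x,y]/(x^2-ty,\,y^2-tx)$ has basis $1,x,y,xy$, not the monomials $x^iy^j$ with $i,j\le 2$.
\item The reduction takes place in $H^1_{\rm dR}$ of the affine curve $X_t\cap\{z_0\neq0\}$. This is harmless, since $H^1_{\rm dR}(X_t)$ injects into it compatibly with $\nabla$.
\end{itemize}

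Your route makes the proposition self-contained and shows where the discriminant factor $1-t^3$ comes from; the citation simply buys brevity.
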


\begin{proof}
The first equation \eqref{GM1} is proved in \cite[Corollary 2.5]{N}, and the second equation \eqref{GM2} follows from \eqref{GM1} and the Picard-Fuchs equation 
$$\left((1-t^3)\dfrac{d^2}{dt^2} - 3t^2 \dfrac{d}{dt}-t \right)F_{\frac13, \frac13; \frac23}(t^3)=0. $$
\end{proof}

\subsection{Frobenius action}
Let $\sigma$ be the $p$th Frobenius on $W[t, (1-t^3)^{-1}]^{\dagger}$, the ring of overconvergent power series defined by $\sigma(t)=ct^p$ with $c \in 1+ p W$, which extends to $\mathscr{O}(S_{K})^{\dagger}=K[t, (1-t^3)^{-1}]^{\dagger}:=K \otimes W[t, (1-t^3)^{-1}]^{\dagger}$. 
Put $X_{\F}:=X \times _{W} \F$ and $S_{\F}:=S \times_{W} \F$. 
Then, there is a comparison isomorphism with algebraic de Rham cohomology, 
$$c \colon H^i_{\rm rig}(X_{\F}/S_{\F}) \simeq H^i_{\rm dR}(X_{K}/S_{K}) \otimes_{\mathscr{O}(S_{K})} \mathscr{O}(S_{K})^{\dagger}. $$
Since a $p$th Frobenius $\Phi_{X/S, \sigma}$ on $H^i_{\rm rig}(X_{\F}/S_{\F})$ (depending on $\sigma$) is defined, $\Phi_{X/S, \sigma}$ acts on $H^i_{\rm dR}(X_{K}/S_{K}) \otimes_{\mathscr{O}(S_{K})} \mathscr{O}(S_{K})^{\dagger}$ via the above comparison.

\begin{prop} \label{Frobenius}
 Suppose that $\sigma$ is given by $\sigma(t)=ct^p$ with $c \in 1+pW$.  
 Then we have 
 $$\Phi_{X/S, \sigma} (\widetilde{\eta}_t) \in K \widetilde{\eta}_t, \qquad \Phi_{X/S, \sigma} (\widetilde{\omega}_t) \equiv p B_p\left(\dfrac13, \dfrac13 \right)^{-1}\widetilde{\omega}_t \pmod{K[[t]] \widetilde{\eta}_t}, $$
 where $B_p(a, a)$ is the $p$-adic Beta function defined in Section \ref{definition}. 
\end{prop}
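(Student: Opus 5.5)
The argument has two parts: first get the shape of $\Phi_{X/S,\sigma}$ in the basis $(\widetilde{\omega}_t,\widetilde{\eta}_t)$ from horizontality, then compute the one remaining constant at a degenerate fiber.

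\emph{Step 1: shape of the matrix.} Extend scalars to $K[[t]]$ and write
$$\Phi_{X/S,\sigma}\begin{pmatrix}\widetilde{\omega}_t & \widetilde{\eta}_t\end{pmatrix}=\begin{pmatrix}\widetilde{\omega}_t & \widetilde{\eta}_t\end{pmatrix}\begin{pmatrix}\alpha & \beta\\ \gamma & \delta\end{pmatrix}.$$
The Frobenius is horizontal: $\nabla\circ\Phi=(\sigma^*\otimes\Phi)\circ\nabla$. By \eqref{GM2}, $\widetilde{\eta}_t$ is horizontal, and $\nabla\widetilde{\omega}_t=g(t)\,dt\otimes\widetilde{\eta}_t$ with $g=-(1-t^3)^{-1}F_{\frac13,\frac13;\frac23}(t^3)^{-2}$. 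Hence $\Phi(\widetilde{\eta}_t)$ is again horizontal. The horizontal sections over $K[[t]]$ are exactly $K\widetilde{\eta}_t$: a section $u\widetilde{\omega}_t+v\widetilde{\eta}_t$ is horizontal iff $u'=0$ and $v'+ug=0$, and $g$ has constant term $-1\neq 0$, so $u=0$ and $v$ is constant. This gives $\beta=0$ and $\delta\in K$, which is the first claim.

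\emph{Step 2: $\alpha$ is constant.} Apply horizontality to $\widetilde{\omega}_t$. The left side is
$$\nabla\Phi(\widetilde{\omega}_t)=d\alpha\otimes\widetilde{\omega}_t+\bigl(\alpha g\,dt+d\gamma\bigr)\otimes\widetilde{\eta}_t .$$
The right side is $\sigma^*(g\,dt)\otimes\Phi(\widetilde{\eta}_t)$, which lies in $\Omega\otimes K\widetilde{\eta}_t$. Comparing $\widetilde{\omega}_t$-components gives $d\alpha=0$, so $\alpha\in K$. It remains to identify this constant.

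\emph{Step 3: computing $\alpha$.} Specialize at $t=0$, where $\widetilde{\omega}_0=\omega_0$ lives on the Fermat cubic $x^3+y^3+1=0$. Since $\sigma(0)=0$, the Frobenius specializes compatibly to the absolute Frobenius on $H^1_{\rm dR}$ of the Fermat curve. Modulo $\widetilde{\eta}$, the quantity $\alpha$ is the eigenvalue of Frobenius on the $\mu_3\times\mu_3$-character piece containing $\omega_0=dy/x^2$. The Fermat curve has CM, and for $p\equiv 1\pmod 3$ this piece of $H^1$ is one-dimensional, with Frobenius eigenvalue given by a Jacobi sum. By Gross--Koblitz this eigenvalue is $p\,\Gamma_p(\frac23)/\Gamma_p(\frac13)^2$, up to a sign and roots of unity, which is $pB_p(\frac13,\frac13)^{-1}$ up to the same sign. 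Consistency with the unit root eigenvalue $\delta$ and the Hodge filtration should confirm that the eigenvalue on the $\omega$-direction is the one divisible by $p$.

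\emph{Step 4: the sign.} Pin down the sign by comparing with Lemma \ref{integral}, where the same constant $(-1)^lB_p(a,a)^{-1}$ arises. An alternative that may be cleaner: compute $\alpha$ directly through the Dwork-type congruence, using that the expansion of $\Phi(\omega_t)$ must match $F(t^3)/F^{(1)}(t^{3\sigma})$-type ratios.

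I expect Step 3 to be the main obstacle, specifically getting the exact constant including sign and normalization. I would first try the Coleman/Gross--Koblitz computation on the Fermat cubic, checking carefully that $\sigma$ specializes correctly at $t=0$.
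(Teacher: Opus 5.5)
Your Step 1 is false, and Step 2 depends on it. The horizontal sections of \eqref{GM2} over $K[[t]]$ are not just $K\widetilde{\eta}_t$. For any $u\in K$, the equation $v'=-ug$ has the solution $v=-u\int_0^t g\,dt\in K[[t]]$, because $d/dt$ is surjective on $K[[t]]$. The nonzero constant term of $g$ is no obstruction. It would be one for a logarithmic connection such as \eqref{GM2'} at $s=0$, where the residue forces $u=0$. Here, however, $t=0$ is a smooth fiber (the Fermat cubic) and \eqref{GM2} is regular there.

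So $\operatorname{Ker}\nabla=K(\widetilde{\omega}_t+g_0\widetilde{\eta}_t)\oplus K\widetilde{\eta}_t$, with $g_0\in tK[[t]]$ and $g_0'=-g$. Horizontality then only yields $\Phi_{X/S,\sigma}(\widetilde{\eta}_t)=C_1(\widetilde{\omega}_t+g_0\widetilde{\eta}_t)+C_2\widetilde{\eta}_t$. If $C_1\neq 0$, the $\widetilde{\omega}_t$-component of $\Phi_{X/S,\sigma}(\nabla\widetilde{\omega}_t)$ is $C_1g^{\sigma}\,d(ct^p)\neq 0$, so your conclusion $d\alpha=0$ in Step 2 fails as well. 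A sanity check confirms that an input is missing. Your Step 1 never uses $p\equiv 1\pmod 3$. Yet for $p\equiv 2\pmod 3$ the Fermat cubic is supersingular and Frobenius interchanges the lines $K\omega_0$ and $K\eta_0$, so the first assertion is false there.

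The missing idea is to kill $C_1$ by arithmetic at $t=0$. Since $\sigma(0)=0$, you may specialize. The classes $\widetilde{\eta}_0=\eta_0=y\,dy/x$ and $\widetilde{\omega}_0+g_0(0)\widetilde{\eta}_0=\omega_0=dy/x^2$ lie in distinct character spaces for the action $(x,y)\mapsto(\zeta^ix,\zeta^jy)$ of $\mu_3\times\mu_3$. For $p\equiv 1\pmod 3$, Frobenius preserves these spaces (the paper cites \cite[Proposition 1.4]{Coleman}). Hence $\Phi(\eta_0)\in K\eta_0$ and $C_1=0$. This is exactly how the paper proves the first assertion. You already invoke the character decomposition in Step 3, but it is needed here too. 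With that repaired, your Steps 2 and 3 coincide with the paper's argument.

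For the constant, do not stop at ``up to a sign and roots of unity'' and then appeal to Lemma~\ref{integral}. That lemma concerns the coefficients of $\widehat{G}^{(\sigma)}_{a,a;2a}$ and does not by itself determine a Frobenius eigenvalue. Instead, quote the exact formula $\Phi(\omega_0)=pB_p(\frac13,\frac13)^{-1}\omega_0$ from \cite[Theorem 1.7]{Coleman}, as the paper does.
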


\begin{proof}
First, we prove that 
$$\Phi_{X/S, \sigma}(\widetilde{\eta}_t) \in K \widetilde{\eta}_t. $$ 
Since $\Phi_{X/S, \sigma} \nabla = \nabla \Phi_{X/S, \sigma}$, we have $\Phi_{X/S, \sigma} \operatorname{Ker} (\nabla) \subset \operatorname{Ker} (\nabla)$. 
By Proposition \ref{GM}, we have 
$$\operatorname{Ker}(\nabla)=K ( \widetilde{\omega}_t + g(t)\widetilde{\eta}_t) \oplus K \widetilde{\eta}_t, $$
where $g(t) \in tK[[t]]$ is the power series such that  
$$ g'(t)={(1-t^3)^{-1}F_{\frac13, \frac13; \frac23}(t^3)^{-2}}. $$
Hence, we have 
$$\Phi_{X/S, \sigma}(\widetilde{\eta}_t) = C_1( \widetilde{\omega}_t + g(t)\widetilde{\eta}_t)+C_2 \widetilde{\eta}_t$$
for some $C_1$, $C_2 \in K$. 
By considering the fiber at $t=0$, we have 
$$\Phi_{X/S, \sigma}(\widetilde{\eta}_t|_{t=0})=C_1 \widetilde{\omega}_t|_{t=0}+ C_2 \widetilde{\eta}_t|_{t=0}. $$
Note that $X_0$ is the Fermat cubic. 
By \cite[Proposition 1.4]{Coleman} and the assumption $p \equiv 1 \pmod{3}$, we have $\Phi_{X/S, \sigma}(\widetilde{\eta}_t|_{t=0}) \in K \widetilde{\eta}_t|_{t=0}$, hence we conclude that $C_1=0$, i.e., $\Phi_{X/S, \sigma}(\widetilde{\eta}_t) \in K \widetilde{\eta}_t$. 

Secondly, we prove the second assertion. 
Let 
\begin{align} \label{Phi_omega}
\Phi_{X/S, \sigma}(\widetilde{\omega}_t)=f_1(t) \widetilde{\omega}_t + f_2(t) \widetilde{\eta}_t, \quad f_1(t), f_2(t) \in K[[t]]. 
\end{align}
Applying $\nabla$ to \eqref{Phi_omega}, we have 
$$f'_1(t) \widetilde{\omega}_t \equiv 0 \pmod{K[[t]] \widetilde{\eta}_t}, $$
which implies that $f_1(t)$ is a constant. 
To determine the constant, we again consider the fiber at $t=0$.  
By \cite[Theorem 1.7]{Coleman}, we have 
$$\Phi_{X/S, \sigma} (\widetilde{\omega}_t |_{t=0})=pB_p\left(\frac13, \frac13 \right)^{-1} \cdot \widetilde{\omega}_t |_{t=0}, $$
which finishes the proof. 
\end{proof}

\begin{thm}[Unit root formula] \label{unit_root_formula}
Suppose $\sigma(t)=t^p$. 
Put 
$$e_t^{\rm unit}:=F_{\frac13, \frac13;\frac23}(t^3)^{-1} \widetilde{\eta}_t. $$
Then, we have 
\begin{align} \label{unit_root}
e_t^{\rm unit} \in H^1_{\rm dR}(X/S) \otimes_{\mathscr{O}(S)} K \langle t, (1-t^3)^{-1}, ([F_{\frac13, \frac13; \frac23}(z)]_{<p}|_{z=t^3})^{-1} \rangle
\end{align}
and 
\begin{align} \label{Frob_on_e}
\Phi_{X/S, \sigma}(e_t^{\rm unit})= B_p\left(\frac13, \frac13 \right) \mathscr{F}^{\rm Dw}_{\frac13, \frac13; \frac23}(t^3) e_t^{\rm unit}. 
\end{align}
\end{thm}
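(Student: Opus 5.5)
The plan is to deduce both claims from Proposition \ref{Frobenius} with $c=1$, combined with Dwork's congruence (Theorem \ref{theorem:1}) for $F:=F_{\frac13,\frac13;\frac23}$. Note that $\frac13$ is fixed by the Dwork prime when $p\equiv 1 \pmod 3$. Indeed, $l=(p-1)/3$ gives $a'=(\frac13+\frac{p-1}{3})/p=\frac13$, and similarly $(\frac23)'=\frac23$. Hence $F^{(1)}_{\frac13,\frac13;\frac23}=F$ and $\mathscr{F}^{\rm Dw}_{\frac13,\frac13;\frac23}(z)=F(z)/F(z^p)$.

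\textbf{Frobenius eigenvalue.} By Proposition \ref{Frobenius} we have $\Phi_{X/S,\sigma}(\widetilde{\eta}_t)=C\,\widetilde{\eta}_t$ for some constant $C\in K$. Since $\Phi$ is $\sigma$-semilinear,
$$\Phi(e^{\rm unit}_t)=\Phi\bigl(F(t^3)^{-1}\widetilde\eta_t\bigr)=F(t^{3p})^{-1}C\,\widetilde\eta_t = C\,\frac{F(t^3)}{F(t^{3p})}\,e^{\rm unit}_t .$$
This equals $C\,\mathscr F^{\rm Dw}(t^3)\,e^{\rm unit}_t$, so the remaining task is to show $C=B_p(\frac13,\frac13)$. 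I would obtain $C$ from the cup product pairing. We have $Q(\Phi x,\Phi y)=p\,\sigma(Q(x,y))$, and $Q(\widetilde\omega_t,\widetilde\eta_t)=Q(\omega_t,\eta_t)$ because the $F$-factors cancel and $Q(\omega,\omega)=0$. Next I check that $Q(\omega_t,\eta_t)$ is a nonzero constant. Horizontality of $Q$ together with \eqref{GM2} gives $dQ(\widetilde\omega,\widetilde\eta)=Q(\nabla\widetilde\omega,\widetilde\eta)+Q(\widetilde\omega,\nabla\widetilde\eta)=0$, since $\nabla\widetilde\omega\in K[[t]]\widetilde\eta$ and $\nabla\widetilde\eta=0$. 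Finally, Proposition \ref{Frobenius} gives $\Phi\widetilde\omega\equiv pB_p^{-1}\widetilde\omega \pmod{\widetilde\eta}$. Putting these together,
$$p\,Q(\widetilde\omega,\widetilde\eta)=Q(\Phi\widetilde\omega,\Phi\widetilde\eta)=pB_p^{-1}C\,Q(\widetilde\omega,\widetilde\eta),$$
so $C=B_p(\frac13,\frac13)$. The constant $Q(\omega,\eta)$ lies in $K$ and is fixed by $\sigma$ (it can also be checked at $t=0$ on the Fermat cubic). This proves \eqref{Frob_on_e}.

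\textbf{Overconvergence/analyticity \eqref{unit_root}.} This is the main obstacle: $e^{\rm unit}_t$ is a priori only defined in $H^1_{\rm dR}\otimes K[[t]]$, and we need it to lie in the completed ring in \eqref{unit_root}. Write $e^{\rm unit}_t=u(t)\,\omega_t+\eta_t$ with
$$u(t)=-t^2+(1-t^3)\frac{[F(t^3)]'}{F(t^3)} .$$
It therefore suffices to show that $F'(z)/F(z)$ lies in $K\langle z,([F(z)]_{<p})^{-1}\rangle$ and then substitute $z=t^3$. Theorem \ref{theorem:1} gives $F(z)/F(z^p)\equiv [F]_{<p^n}/[F(z^p)]_{<p^n}$, which places $\mathscr F^{\rm Dw}$ in $W\langle z,h(z)^{-1}\rangle$ with $h=[F]_{<p}$, by Dwork's standard argument using $[F]_{<p^{n}}\equiv \prod_{i<n}[F]_{<p}(z^{p^i}) \pmod p$. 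Writing $\mathscr F^{\rm Dw}=F(z)/F(z^p)$ and taking logarithmic derivatives,
$$\frac{F'(z)}{F(z)}=\frac{(\mathscr F^{\rm Dw})'}{\mathscr F^{\rm Dw}}+pz^{p-1}\frac{F'(z^p)}{F(z^p)} .$$
Iterating this identity expresses $F'/F$ as a sum $\sum_k p^k z^{p^k-1}\,\sigma^k\bigl((\mathscr F^{\rm Dw})'/\mathscr F^{\rm Dw}\bigr)$, which converges $p$-adically in the Krasner-analytic ring. Each term is analytic because $\mathscr F^{\rm Dw}$ is a unit there (it is congruent to $h(z)/h(z^p)$ mod $p$), and $\sigma$ preserves the ring since $h(z^p)\equiv h(z)^p$ mod $p$. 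The factor $(1-t^3)$ and the $\mathscr O(S)$-structure then account for the extra inverted element $(1-t^3)^{-1}$ in \eqref{unit_root}, which completes the argument.
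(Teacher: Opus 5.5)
Your proposal is correct and follows essentially the same route as the paper: the eigenvalue of $\widetilde\eta_t$ is pinned down via the horizontal constant $Q(\widetilde\omega_t,\widetilde\eta_t)$, the relation $Q(\Phi x,\Phi y)=p\,Q(x,y)^{\sigma}$ and Proposition \ref{Frobenius}, while \eqref{unit_root} is reduced to analyticity of $F'/F$, which the paper just cites from Dwork's congruence (via the proof of \cite[Theorem 2.4]{As2}) and which you prove directly by the telescoping identity $F'/F=\sum_{k\ge0}p^k z^{p^k-1}\sigma^k\bigl((\mathscr F^{\rm Dw})'/\mathscr F^{\rm Dw}\bigr)$. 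Two small touch-ups: since $\sigma$ is not the identity on $K$, you should justify the invariance $\sigma(C)=C$ as the paper does, by $C\in\Q^\times$ (the curve, the forms and $Q$ are defined over $\Q$); and the case $n=1$ of Dwork's congruence gives $\mathscr F^{\rm Dw}\equiv h(z)$, not $h(z)/h(z^p)$, modulo $p$ --- harmless, since both are units.
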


\begin{proof}
Since 
\begin{align*}
\dfrac{[F_{\frac13, \frac13; \frac23}(t^3)]'}{F_{\frac13, \frac13; \frac23}(t^3)} \in  K \langle t, (1-t^3)^{-1}, ([F_{\frac13, \frac13; \frac23}(z)]_{<p}|_{z=t^3})^{-1} \rangle
\end{align*}
 by Theorem \ref{theorem:1} (see the proof of \cite[Theorem 2.4]{As2}), \eqref{unit_root} follows. 
 We will show \eqref{Frob_on_e}, which is equivalent to 
 \begin{align*} 
 \Phi_{X/S, \sigma}(\widetilde{\eta}_t)=B_p\left(\frac13, \frac13 \right) \widetilde{\eta}_t. 
 \end{align*}
Let 
$$Q \colon H^1_{\rm dR}(X_K/S_K) \otimes H^1_{\rm dR}(X_K/S_K) \to H^2_{\rm dR}(X_K/S_K) \simeq \mathscr{O}(S_K)$$
be the cup-product pairing, which is anti-symmetric and non-degenerate. 
Since $\nabla(\widetilde{\omega}_t) \in K[[t]] \widetilde{\eta}_t$, we have 
$$dQ(\widetilde{\omega}_t, \widetilde{\eta}_t)=Q(\nabla(\widetilde{\omega}_t), \widetilde{\eta}_t)=0, $$
which means that $Q(\widetilde{\omega}_t, \widetilde{\eta}_t)=C$ for some $C \in K^{\times}$.  
Since the pairing $Q$ and the elements $\widetilde{\omega}_t$, $\widetilde{\eta}_t$ are defined over a ring $\Q[[t]]$, it turns out that $C \in K^{\times} \cap \Q[[t]]=\Q^{\times}$.  
 By Proposition \ref{Frobenius}, there is a constant $\alpha \in K$ such that $\Phi_{X/S, \sigma}(\widetilde{\eta}_t)=\al \widetilde{\eta}_t$. 
 Then we have 
 \begin{align*}
 Q(\Phi_{X/S, \sigma}(\widetilde{\omega}_t), \Phi_{X/S, \sigma}(\widetilde{\eta}_t))=pQ(\widetilde{\omega}_t, \widetilde{\eta}_t)^{\sigma}=pC, 
 \end{align*}
 hence 
 $$\al Q(\Phi_{X/S, \sigma}(\widetilde{\omega}_t), \widetilde{\eta}_t) =pC. $$
Since $Q(\widetilde{\eta}_t, \widetilde{\eta}_t)=0$, by Proposition \ref{Frobenius}, 
the left-hand side is 
$$p \al B_p\left(\frac13, \frac13 \right)^{-1} Q(\widetilde{\omega}_t, \widetilde{\eta}_t)= p \al B_p\left(\frac13, \frac13 \right)^{-1} C, $$
which means that $\al=B_p\left(1/3, 1/3 \right)$. Hence, the assertion follows. 
\end{proof}

\subsection{Syntomic regulator of the Hesse cubic curve}
For $\alpha \in W \cup \{\infty\}$, let $P_{\alpha}$ denote the $W$-valued point of $\mathbb{P}^1_{W}$ given by $t= \alpha$. 
Let $Q:=\mathbb{P}^1_{W} \setminus \{1, \zeta, \zeta^2\} \supset S=\mathbb{P}^1_{W} \setminus \{1, \zeta, \zeta^2, \infty\}$ and put $Y:= \of^{-1}(Q)$. 
Let $\overline{D} \subset \overline{Y}$ be the closure of the sections $\{x_0y_0z_0=0\}$. 
Each irreducible component of $\overline{D}$ is isomorphic to $\mathbb{P}^1_W$, and the nine curves are pairwise disjoint. 
Moreover, they meet the fiber $\of^{-1}(P_i)$ $(i \in \{1, \zeta, \zeta^2, \infty\})$ only in its regular locus, and the intersections are transverse.  
In particular, $\overline{D} \cup \bigcup_{i \in \{1, \zeta, \zeta^2, \infty\}} \of^{-1}(P_i)$ is a relative simple normal crossings divisor over $W$. 
Put $D:= Y \cap \overline{D}$, $D_X:= D \cap X$ and $U:=X \setminus D_X$. 
There is a $p$th Frobenius $\sigma$ on the weak completion $\mathscr{O}(S)^{\dagger}$ given by $\sigma(t)=ct^p$ compatible with the Frobenius on $W$. 
Then, the above setting $(Y/Q, S, D, \sigma)$ satisfies the conditions i), \ldots, iv) in \cite[Section 4.1]{AM} (note (iv) follows from \cite[Lemma 4.1]{AM}), hence 
 \begin{align*}
 [-]_{U/S} \colon K_2^{M}(\mathscr{O}(U)) \to \operatorname{Ext}^1_{\text{Fil-}F\text{-MIC}(S, \sigma)}(\mathscr{O}, H^1(U/S)(2))
 \end{align*}
is defined. 
Let 
$$\xi=6\{x, y\} \in K_2^M(\mathscr{O}(U))$$
be a Milnor symbol in $K_2$. 
Then, $\xi$ has no boundary at $X \setminus U$, i.e. the image of the tame symbol map is $1$,  hence the symbol map $[-]_{U/S}$ also defines a $1$-extension 
\begin{align*}
0 \to H^1(X/S)(2) \to M_{\xi}(X/S) \to \mathscr{O} \to 0
\end{align*}
in the exact category $\text{Fil-}F\text{-MIC}(S, \sigma)$ (\cite[Proposition 4.3]{AM}) endowed with 
\begin{itemize}
\item Frobenius $\Phi_{\sigma}$-action which is a $\sigma$-linear, 
\item $\operatorname{Fil}^i \subset M_{\xi}(X_K/S_K)_{\rm dR}$ (Hodge filtration) with 
$$ \operatorname{Fil}^0 M_{\xi}(X_K/S_K)_{\rm dR} \overset{\simeq}{\to} \mathscr{O}(S_K). $$
\end{itemize}
Let $e_{\xi} \in \text{Fil}^0 M_{\xi}(X_K/S_K)_{\rm dR}$ be the unique lifting of $1 \in \mathscr{O}(S_K)$.

Let $\we_k(t)$ and $\wE_k(t)$ ($k=1, 2$) be defined by 
\begin{align}
e_{\xi} - \Phi_{\sigma}(e_{\xi}) 
&=\we_1(t) \omega_t + \we_2(t) \eta_t \\ 
&=\wE_1(t) \widetilde{\omega}_t + \wE_2(t) \widetilde{\eta}_t. \label{E}
\end{align}
The relation between $\we_k(t)$ and $\wE_k(t)$ is explicitly given by 
\begin{align*}
&\we_1(t)=\wE_1(t) F_{\frac13, \frac13; \frac23}(t^3)^{-1} + (-t^2F_{\frac13, \frac13; \frac23}(t^3) +(1-t^3) [F_{\frac13, \frac13; \frac23}(t^3)]') \wE_2(t),  \\
&\we_2(t)=\wE_2(t) F_{\frac13, \frac13; \frac23}(t^3). 
\end{align*}
By the definition, $\we_k(t)$ $(k=1, 2)$ are automatically overconvergent functions, 
$$\we_k(t) \in K[t, (1-t^3)^{-1}]^{\dagger}. $$
On the other hand, since $[F_{\frac13, \frac13; \frac23}(t^3)]'/F_{\frac13, \frac13; \frac23}(t^3)$ is a convergent function, so is $\wE_1(t)/F_{\frac13, \frac13; \frac23}(t^3)$, 
$$\dfrac{\wE_1(t)}{F_{\frac13, \frac13; \frac23}(t^3)} \in K \langle t, (1-t^3)^{-1}, ([F_{\frac13,\frac13;\frac23}(z)]_{<p}|_{z=t^3})^{-1} \rangle. $$

The following is the main theorem in this paper, which provides a geometric aspect of $\widehat{\mathscr{F}}^{(\sigma)}_{a, a;2a}(t)$, the $p$-adic hypergeometric function defined in Section \ref{definition}.

\begin{thm} \label{main:2} 
Let $\sigma$ (resp. $\tau$) be the $p$th Frobenius defined by $\sigma(t)=ct^p$ (resp. $\tau(t)=c^3t^p$) with $c \in 1+pW$. Then, we have 
\begin{align*}
\dfrac{\wE_1(t)}{F_{\frac13,\frac13;\frac23} (t^3)} = - 2t \widehat{\mathscr{F}}^{(\tau)}_{\frac13, \frac13;\frac23}(t^3). 
\end{align*}
Hence, 
\begin{align*}
e_{\xi} -\Phi_{\sigma}(e_{\xi}) \equiv -2t \widehat{\mathscr{F}}^{(\tau)}_{\frac13, \frac13;\frac23}(t^3) \omega_t \pmod{K[[t]] \widetilde{\eta}_t}. 
\end{align*}
\end{thm}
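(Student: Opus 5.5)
The plan is to characterize $\wE_1(t)$ by a first-order differential equation obtained from the compatibility of $\Phi_\sigma$ with the connection, solve it by integrating from $t=0$, and then recognise the solution as $-2t\,\widehat{G}^{(\tau)}_{\frac13,\frac13;\frac23}(t^3)$. Dividing by $F_{\frac13,\frac13;\frac23}(t^3)$ then gives the first identity. The second identity follows from \eqref{E}, since $\widetilde{\omega}_t = F_{\frac13,\frac13;\frac23}(t^3)^{-1}\omega_t$.

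\emph{Step 1: compute $\nabla e_\xi$.} As in \cite{AM}, $\nabla(e_\xi)$ lies in $dt\otimes H^1_{\rm dR}(X_K/S_K)$ and is given by $\frac{d}{dt}$ of the symbol $6\,d\log x\wedge d\log y$ in relative cohomology. Differentiating $x^3+y^3+1-3txy=0$ with respect to $t$ at fixed $y$ gives $\partial x/\partial t = xy/(x^2-ty)$. Hence the residue computation should give $\nabla(e_\xi)=\kappa\, dt\otimes\omega_t$ for an explicit rational constant $\kappa$; I expect $\kappa=-6\cdot\frac13=-2$ up to sign conventions, matching the complex computation in \cite{N}. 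I will take $\nabla e_\xi=-2\,dt\otimes \omega_t$, possibly modulo $\eta_t$-terms that do not matter below.

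\emph{Step 2: derive and solve the ODE.} Since $\Phi_\sigma\nabla=\nabla\Phi_\sigma$, we have $\nabla(e_\xi-\Phi_\sigma e_\xi)=\nabla e_\xi-\Phi_\sigma(\nabla e_\xi)$. Write $\omega_t=F_{\frac13,\frac13;\frac23}(t^3)\,\widetilde{\omega}_t$ and apply \eqref{GM2}: the $\widetilde{\omega}_t$-component of $\nabla(\wE_1\widetilde\omega_t+\wE_2\widetilde\eta_t)$ is just $d\wE_1$. By Proposition \ref{Frobenius}, $\Phi_\sigma(\widetilde\omega_t)\equiv pB_p(\frac13,\frac13)^{-1}\widetilde\omega_t$ modulo $\widetilde\eta_t$. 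Moreover, $\Phi_\sigma(dt\otimes\omega_t)=d\sigma(t)\otimes F(\sigma(t)^3)^{\sigma}\Phi_\sigma(\widetilde\omega_t)$, with $d\sigma(t)=pct^{p-1}dt$. Comparing $\widetilde\omega_t$-components should then give
$$\frac{d\wE_1}{dt}=-2\Bigl(F_{\frac13,\frac13;\frac23}(t^3)-B_p(\tfrac13,\tfrac13)^{-1}\,c\,t^{p-1}\,F_{\frac13,\frac13;\frac23}^{F}(c^3t^{3p})\Bigr).$$
Here $F$ denotes the Frobenius on $W$ applied to the coefficients; the factor $p$ from $\Phi$ cancels against the normalisation. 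Note that $(1/3)'=1/3$, and $l=(p-1)/3$ is even because $p\equiv1 \pmod 6$, so $(-1)^l=1$. Also $\sigma(t)^3=c^3t^{3p}=\tau(t^3)$, which is exactly why $\tau$ appears in the statement.

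\emph{Step 3: the integration constant and the identification.} Both $\wE_1$ and $t\,\widehat G^{(\tau)}(t^3)$ lie in $tK[[t]]$: the latter because $t\cdot (t^3)^{-1/3}\int_0^{t^3} z^{1/3}(\cdots)\,dz/z$ is $t\cdot t^{-1}\cdot(\text{series in } t)$ starting in degree $1$. So it suffices to show $\wE_1(0)=0$. The substitution $z=t^3$ gives $z^{-1/3}\int_0^z z^{1/3}(\cdots)\frac{dz}{z}=t^{-1}\int_0^t(\cdots)\,dt$, and the term $[z^{a'}F(z)]^{\tau}$ becomes $c\,t^{p}F^F(c^3t^{3p})$. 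Differentiating $t\widehat G^{(\tau)}(t^3)$ in $t$ therefore reproduces the bracket in Step 2, and hence $\wE_1=-2t\,\widehat G^{(\tau)}_{\frac13,\frac13;\frac23}(t^3)$.

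The main obstacle is the constant: showing $\wE_1(0)=0$. This amounts to showing that the $\omega$-component of $e_\xi-\Phi(e_\xi)$ vanishes on the Fermat fibre $X_0$. I would first try the argument of Proposition \ref{Frobenius}: on $X_0$ the symbol $\{x,y\}$ is fixed by $\mu_3\times\mu_3$, and the eigenspace decomposition under this group, for $p\equiv1\pmod3$, separates the $\omega$- and $\eta$-lines. If that fails, I would instead compute via the explicit formula for the symbol map in \cite{AM} on the fibre at $0$. A secondary difficulty is pinning down the sign and the constant $-2$ in Step 1.
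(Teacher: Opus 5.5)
Your Steps 1--2 follow the paper's route, but the one non-formal point of the theorem is left open. The differential equation determines $\wE_1$ only up to an additive constant $C$, and you do not prove $\wE_1(0)=0$. (Your sentence asserting that both $\wE_1$ and $t\widehat{G}^{(\tau)}(t^3)$ lie in $tK[[t]]$ presupposes it.)

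The paper disposes of $C$ analytically:
\begin{itemize}
\item $\wE_1/F_{\frac13,\frac13;\frac23}(t^3)$ is convergent, because the $\we_k$ are overconvergent and $[F_{\frac13,\frac13;\frac23}(t^3)]'/F_{\frac13,\frac13;\frac23}(t^3)$ is convergent by Theorem \ref{theorem:1}.
\item $2t\widehat{\mathscr{F}}^{(\tau)}_{\frac13,\frac13;\frac23}(t^3)$ is convergent by \eqref{convergent}, i.e.\ by Theorem \ref{main:1}.
\item Hence $C\neq 0$ would make $1/F_{\frac13,\frac13;\frac23}(t^3)$ convergent, contradicting Lemma \ref{HGconvergent}.
\end{itemize}

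Your symmetry idea can replace this argument, but not as you phrase it. Merely separating the $\omega$- and $\eta$-lines proves nothing; what is needed is that $\omega$ carries a \emph{nontrivial} character. For instance, take $\phi\colon(x,y,t)\mapsto(\zeta x,y,\zeta^{-1}t)$. It preserves the Hesse family, $D$ and $S$, and commutes with $\sigma$. It also satisfies $\phi^*\xi=\xi+6\{\zeta,y\}=\xi$ and $\phi^*\omega_t=\zeta\omega_t$. Granting functoriality of $[-]_{U/S}$ under such automorphisms, $\phi^*$ fixes $e_\xi-\Phi_\sigma(e_\xi)$. This gives $\we_1(\zeta^{-1}t)=\zeta^{-1}\we_1(t)$, so $\we_1\in tK[[t^3]]$. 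Since the coefficient of $\wE_2$ in $\we_1$ vanishes at $t=0$, you get $\wE_1(0)=\we_1(0)=0$.

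This route would avoid Theorem \ref{main:1} and Lemma \ref{HGconvergent}, at the price of a functoriality statement for the Asakura--Miyatani extension that you would have to justify. On the fibre $X_0$ alone, the operative fact is $H^1_{\rm dR}(X_0)^{\mu_3\times\mu_3}=0$: the map $x\mapsto\zeta x$ multiplies $\omega_0$ and $\eta_0$ by $\zeta$ and $\zeta^2$ respectively. As submitted, however, the constant is not determined.

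Separately, your constants contain two factor-$3$ errors that happen to cancel.
\begin{itemize}
\item Step 1: your own relation $(x^2-ty)\,dx+(y^2-tx)\,dy=xy\,dt$ gives $\frac{dx}{x}\wedge\frac{dy}{y}=dt\wedge\omega_t$ exactly. So $\nabla(e_\xi)=-6\,dt\otimes\omega_t$, with no factor $\frac13$. There is also no $\eta_t$-term, and such a term would not have been harmless, since $\eta_t\notin K[[t]]\widetilde{\eta}_t$. The correct equation is $\frac{d}{dt}\wE_1=-6\cdot(\text{your bracket})$.
\item Step 3: since $dz/z=3\,dt/t$, one has $z^{-1/3}\int_0^z z^{1/3}(\cdots)\frac{dz}{z}=3t^{-1}\int_0^t(\cdots)\,dt$, not $t^{-1}\int_0^t(\cdots)\,dt$. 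Equivalently, $\frac{d}{dt}\bigl(t\widehat{G}^{(\tau)}_{\frac13,\frac13;\frac23}(t^3)\bigr)=3\cdot(\text{your bracket})$.
\end{itemize}
With your constant $-2$ and a correct substitution you would have obtained $\wE_1=-\frac23\, t\widehat{G}^{(\tau)}_{\frac13,\frac13;\frac23}(t^3)$ (with $C=0$).
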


\begin{proof}
Apply the Gauss-Manin connection $\nabla$ to \eqref{E}. 
Since $\nabla \Phi_{\sigma}= \Phi_{\sigma} \nabla$ and $\nabla(e_{\xi})=-d\log \xi=-6dt \wedge \omega_t$ by \cite[Remark 4.18]{N}, we have 
\begin{align*}
(1- \Phi_{\sigma}) (-6dt \wedge \omega_t)
=
\nabla(\wE_1(t) \widetilde{\omega}_t + \wE_2(t) \widetilde{\eta}_t). 
\end{align*}
Let $\Phi_{X/S, \sigma}$ denote the $p$-adic Frobenius on $H^1_{\rm rig} (X_{\overline{\mathbb{F}}_p}/S_{\overline{\mathbb{F}}_p})$. 
Then the $\Phi_{\sigma}$ on $H^1_{\rm rig}(X/S)(2)$ agrees with $p^{-2} \Phi_{X/S, \sigma}$ by the definition of the Tate twist. It follows from Proposition \ref{Frobenius} that we have 
\begin{align*}
\Phi_{X/S, \sigma} (\widetilde{\omega}_t) \equiv p B_p\left(\frac13, \frac13 \right)^{-1} \widetilde{\omega}_t \pmod{K[[t]] \widetilde{\eta}_t}. 
\end{align*}
Therefore, we have 
\begin{align*}
(1-\Phi_{\sigma}) (-6dt \wedge \omega_t)\equiv - 6\left(tF_{\frac13, \frac13; \frac23}(t^3) -B_p\left(\frac13, \frac13\right)^{-1} [tF_{\frac13, \frac13; \frac23}(t^3)]^{\sigma} \right) \cdot \dfrac{dt}{t} \widetilde{\omega}_t \pmod{K[[t]] \widetilde{\eta}_t}. 
\end{align*}
 On the other hand, by Proposition \ref{GM}, we have  
 \begin{align*}
\nabla(\wE_1(t) \widetilde{\omega}_t + \wE_2(t) \widetilde{\eta}_t) \equiv t \dfrac{d}{dt} \widehat{E}_1(t) \cdot \dfrac{dt}{t} \widetilde{\omega}_t  \pmod{K[[t]] \widetilde{\eta}_t}. 
 \end{align*}
 Hence, we obtain 
 \begin{align*}
 t\dfrac{d}{dt} \wE_1(t) = -6 \left(tF_{\frac13, \frac13; \frac23}(t^3) -B_p\left(\frac13, \frac13\right)^{-1} [tF_{\frac13, \frac13; \frac23}(t^3)]^{\sigma} \right). 
\end{align*}
Then we have 
\begin{align*}
-\wE_1(t) &= C+6 \int_0^t \left(tF_{\frac13, \frac13; \frac23}(t^3) -B_p\left(\frac13, \frac13\right)^{-1} [tF_{\frac13, \frac13; \frac23}(t^3)]^{\sigma}\right) \dfrac{dt}{t} \\
&=C+ 2\int_0^{t^3} \left(t^{\frac13}F_{\frac13, \frac13; \frac23}(t) -B_p\left(\frac13, \frac13\right)^{-1} [t^{\frac13}F_{\frac13, \frac13; \frac23}(t)]^{\tau} \right) \dfrac{dt}{t} \\
&\overset{(*)}{=}C +  2t\widehat{G}_{\frac13, \frac13; \frac23}^{(\tau)}(t^3), 
\end{align*}
where $(*)$ follows from $l=(p-1)/3 \equiv 0 \pmod{2}$. 
We will show that $C=0$. 
Note that $-\wE_1(t)/F_{\frac13, \frac13; \frac23}(t^3)$ and $2t \widehat{G}_{\frac13, \frac13; \frac23}^{(\tau)}(t^3)/F_{\frac13, \frac13; \frac23}(t^3)= 2t\widehat{\mathscr{F}}_{\frac13, \frac13;\frac23}^{(\tau)}(t^3)$ are convergent functions by \eqref{convergent}. 
Therefore, if $C \neq 0$, then we have 
$$\dfrac{1}{F_{\frac13, \frac13; \frac23}(t^3)} \in K\langle t, (1-t^3)^{-1}, ([F_{\frac13,\frac13;\frac23}(z)]_{<p}|_{z=t^3})^{-1} \rangle. $$
By the following lemma, this is a contradiction; thus $C$ must be $0$. 
Hence, the assertion follows. 
\end{proof}

\begin{lem} \label{HGconvergent}
Let $p$ be a prime and $N \ge 2$ an integer such that $p \equiv 1 \pmod{N}$. 
Let $a \in N^{-1}\Z$, $0 < 2a \le 1$. 
Then, $1/F_{a,a;2a}(t)$ is not a convergent function, i.e., 
$$\dfrac1{F_{a,a;2a}(t)} \not \in W \langle t, (1-t)^{-1}, ([F_{a,a;2a}(t)]_{<p})^{-1}\rangle. $$ 
\end{lem}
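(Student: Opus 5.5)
The plan is to reduce modulo $p$ and show that $F_{a,a;2a}(t)\bmod p$ is not a rational function over $\F$. If $1/F_{a,a;2a}(t)$ were convergent, it would force that reduction to be rational.

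\emph{Reduction modulo $p$.} Suppose, for contradiction, that $1/F_{a,a;2a}(t)$ lies in $W\langle t,(1-t)^{-1},([F_{a,a;2a}(t)]_{<p})^{-1}\rangle$. The image of this ring modulo $p$ is $\F[t,(1-t)^{-1},\overline{h}(t)^{-1}]$, where $\overline{h}$ is the reduction of $[F_{a,a;2a}(t)]_{<p}$. So there is a rational function $R(t)\in\F(t)$ with $F_{a,a;2a}(t)\,R(t)\equiv 1 \pmod{p}$ in $\F[[t]]$. Since $F_{a,a;2a}(0)=1$, it follows that $\overline{F}(t):=F_{a,a;2a}(t)\bmod p$ equals the rational function $1/R(t)$, and we write $\overline{F}=A/B$ with $A,B\in\F[t]$ coprime and $B(0)\ne0$.

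\emph{Dwork's functional equation.} Since $a\in N^{-1}\Z$ and $p\equiv 1\pmod N$, we have $a'=a$ and $(2a)'=2a$. Explicitly, $l=(p-1)a$ is an integer in $\{0,\dots,p-1\}$ and $(a+l)/p=a$, and the same holds for $2a$ because $2a\le1$. The hypotheses of Theorem~\ref{theorem:1} hold, since $[a]_i=(p-1)a<(p-1)2a=[2a]_i$. Taking $n=l=1$ in Theorem~\ref{theorem:1} and using $F^{(1)}=F$ together with $F(t^p)\equiv 1\pmod{(t^p)}$, we get
\begin{align*}
\overline{F}(t)=P(t)\,\overline{F}(t^p),\qquad P(t):=[F_{a,a;2a}(t)]_{<p}\bmod p .
\end{align*}
Next we compute $\deg P$. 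For $k\le l=(p-1)a$, the numerator $(a)_k^2$ is a $p$-adic unit, because $a+i\not\equiv0\pmod p$ for $i<l$. The denominator is also a unit, since $2a+i\not\equiv 0$ for $i<2l\ge k$ and $k<p$. For $l<k<p$, the numerator is divisible by $p^2$ while the denominator contains at most one factor of $p$. Hence $\deg P=l$ exactly, and $1\le l\le (p-1)/2$, using $a>0$ and $2a\le 1$.

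\emph{Degree contradiction.} Define $d=\deg A-\deg B\in\Z$. Substituting $\overline{F}=A/B$ into the functional equation gives $A(t)B(t^p)=P(t)A(t^p)B(t)$, and comparing degrees yields $\deg A+p\deg B=l+p\deg A+\deg B$. This means $d(1-p)=l$, so $d=-l/(p-1)$. Since $0<l<p-1$, $d$ is not an integer, which is a contradiction. Therefore $1/F_{a,a;2a}(t)$ is not convergent.

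The only delicate point is checking that $\deg P$ is exactly $l$, that is, that the $t^l$-coefficient of $[F_{a,a;2a}(t)]_{<p}$ is a $p$-adic unit and the higher truncated coefficients vanish modulo $p$. I would verify this by directly counting the factors of $p$ in the Pochhammer symbols, as above. Even a weaker statement suffices: $0<\deg P<p-1$ already gives the contradiction.
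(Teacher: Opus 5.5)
Your proof is correct, but it takes a different route from the paper at the decisive step.

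Both arguments start from the same input. This is the mod $p$ functional equation $\overline{F}(t)=P(t)\,\overline{F}(t^p)$ from Theorem~\ref{theorem:1}, which uses $a'=a$ and $(2a)'=2a$, together with $\deg P=l=(p-1)a$.

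The paper works at the finite $\mathbb{F}_p$-rational points. Take $\alpha\in\mathbb{F}_p$ with $\alpha\neq 1$ and $P(\alpha)\neq 0$. The paper shows that $g=1/\overline{F}$ cannot vanish at $\alpha$: a zero of order $k>0$ would force $P(\alpha)=0$, because $t^p-\alpha=(t-\alpha)^p$. Hence $P(\alpha)=g(\alpha^p)/g(\alpha)=1$. A count then shows $P-1$ would have at least $p-1-l>l$ roots. That count needs $l<(p-1)/2$, i.e. $2a<1$, which is why the paper handles $a=1/2$ separately by citing Asakura.

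You compare degrees instead. This amounts to looking at the order of the rational function at the other Frobenius-fixed point, $\infty$. The relation $\operatorname{ord}_\infty \overline{F}(t^p)=p\,\operatorname{ord}_\infty \overline{F}$ forces $(p-1)\mid l$. Your argument only needs $0<l<p-1$, so:
\begin{itemize}
\item it covers $a=1/2$ uniformly, with no outside citation;
\item it proves a stronger statement: $\overline{F}$ is not a rational function over $\F$ at all, so $1/F_{a,a;2a}(t)$ lies in no ring of the form $W\langle t, g(t)^{-1}\rangle$, not just the one in the lemma.
\end{itemize}

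Your explicit check that $\deg P=l$ is also welcome; the paper asserts it without proof. Your check shows the coefficients are units for $k\le l$, while for $l<k<p$ the numerator carries $p^2$ and the denominator at most $p^1$.
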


\begin{proof}
When $a=1/2$, this lemma is proved by Asakura (see the proof of \cite[Theorem 4.8]{As}). 
Suppose that $a \neq 1/2$, i.e. $0<2a<1$. 
For a power series $f(t)$, we write $\overline{f}(t) \equiv f(t) \pmod{p}$. 
By Theorem \ref{theorem:1}, we have 
\begin{align*} 
\dfrac{\overline{F}_{a, a;2a}(t)}{\overline{F}_{a, a;2a}(t^p)}  = [F_{a,a;2a}(t)]_{<p}
\end{align*}
in $A:=W/pW[t, (1-t)^{-1}, [F_{a,a;2a}(t)]_{<p}^{-1}]$.  
Put 
$$B:=\{\al \in \mathbb{F}_p \mid \al \neq 1 \ \text{and} \ [F_{a,a;2a}(t)]_{<p}|_{t=\al} \neq 0 \}. $$
Suppose that $1/F_{a, a;2a}(t)$ is a convergent function. 
Then we have $1/\overline{F}_{a, a;2a}(t) \in A$. 
For any $\al \in B \setminus \{0\}$, 
let $g(t)=1/\overline{F}_{a, a;2a}(t)=(t -\al)^k g_0(t)$ with $g_0(t) \in A$ and $g_0(\al) \neq 0$. 
Suppose $k \neq 0$. Then, we have  
$$ [F_{a,a;2a}(t)]_{<p}|_{t=\al} = \left. \dfrac{\overline{F}_{a, a;2a}(t)}{\overline{F}_{a, a;2a}(t^p)} \right|_{t=\al} = \left. \dfrac{(t^p-\al)^kg_0(t^p)}{(t-\al)^k g_0(t)} \right|_{t=\al} =(t-\al)^{k(p-1)}|_{t=\al}=0, $$
which is a contradiction. Hence, we have $k=0$. 
Note that $[F_{a,a;2a}(t)]_{<p}|_{t=0}=1$. 
Therefore, for any $\al \in B$, we have 
$$ [F_{a,a;2a}(t)]_{<p}|_{t=\al} = 1. $$
Let $l \in \{0, \ldots, p-1\}$ be the unique integer such that $a+l \equiv 0 \pmod{p}$. 
Since $|B| \ge p-l-1 >  l =\operatorname{deg}[F_{a,a;2a}(t)]_{<p}$, 
this is a contradiction.  
\end{proof}

By \cite[Theorem 4.4]{AM}, we have the following theorem.

\begin{thm} \label{syntomic_reg}
Let $\al \in W^{\times}$ satisfy $\al^3 \not \equiv 1 \pmod{p}$. 
Let $\sigma_{\al}$ (resp. $\tau_{\al}$) be the Frobenius defined by ${\sigma_{\al}}(t)=F(\alpha) \alpha^{-p} t^p$ (resp. ${\tau_{\al}}(t)=F(\alpha^3) \alpha^{-{3p}} t^p$), where $F$ is the Frobenius on $W$. 
Let 
$$r_{\rm syn} \colon K_2(X_{\alpha, K})^{(2)} \otimes \Q \to H^2_{\rm syn}(X_{\alpha, K}, \Q_p(2)) \simeq H^1_{\rm dR}(X_{\alpha, K}/K)$$
be the syntomic regulator map. Then 
$$r_{\rm syn}(\xi |_{X_{\alpha, K}})= \we_1(\al) \omega_{\alpha} + \we_2(\al) \eta_{\alpha}. $$
\end{thm}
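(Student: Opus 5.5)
This is an application of \cite[Theorem 4.4]{AM}, so the plan is to check that our setting meets its hypotheses. Then we identify the output of that theorem with the specialization at $t=\alpha$ of the extension $M_\xi(X/S)$ built above.

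First, the general setup. The data $(Y/Q, S, D, \sigma)$ satisfy conditions i)--iv) of \cite[Section 4.1]{AM}, with (iv) coming from \cite[Lemma 4.1]{AM}. The symbol $\xi=6\{x,y\}$ has trivial tame symbol along $X\setminus U$. By \cite[Proposition 4.3]{AM} it therefore gives the $1$-extension
$$0\to H^1(X/S)(2)\to M_\xi(X/S)\to\mathscr{O}\to 0$$
in $\text{Fil-}F\text{-MIC}(S,\sigma)$. Theorem 4.4 of \cite{AM} states the following. Let $P_\alpha$ be a $W$-point of $S$, and let the Frobenius $\sigma_\alpha$ satisfy $\sigma_\alpha(\alpha)=F(\alpha)$, so that $P_\alpha$ is compatible with $\sigma_\alpha$. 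Then the syntomic regulator of the restriction of $\xi$ to the fiber equals $e_\xi-\Phi_{\sigma_\alpha}(e_\xi)$ evaluated at $t=\alpha$, read in $H^1_{\rm dR}(X_{\alpha,K}/K)$.

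Second, the specialization. Since $\alpha^3\not\equiv1 \pmod p$ and $\alpha\in W^\times$, the point $P_\alpha$ lies in $S(W)$, so the fiber $X_\alpha$ has good reduction. With $\sigma_\alpha(t)=F(\alpha)\alpha^{-p}t^p$ we get $\sigma_\alpha(\alpha)=F(\alpha)$, so the specialization map commutes with Frobenius. The constant $c=F(\alpha)\alpha^{-p}$ lies in $1+pW$ because $F(\alpha)\equiv\alpha^p \pmod p$. This is exactly the class of Frobenius used in Proposition \ref{Frobenius} and Theorem \ref{main:2}.

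Third, the evaluation. The functions $\we_k(t)$ are overconvergent on $K[t,(1-t^3)^{-1}]^\dagger$, so they can be evaluated at $t=\alpha$. The condition $|1-\alpha^3|=1$ keeps $\alpha$ inside the domain of overconvergence. Specializing $e_\xi-\Phi_{\sigma_\alpha}(e_\xi)=\we_1\omega_t+\we_2\eta_t$ then gives $\we_1(\alpha)\omega_\alpha+\we_2(\alpha)\eta_\alpha$.

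The main obstacle is normalization. The identification $H^2_{\rm syn}(X_{\alpha,K},\Q_p(2))\simeq H^1_{\rm dR}$ in \cite{AM} must have the same sign and Tate-twist conventions as our definition of $e_\xi$. One also has to confirm that the regulator of \cite{AM} agrees with that of \cite{Besser, Nekovar_Niziol}, possibly after the factor $6$ and the passage to $K_2(X_{\alpha,K})^{(2)}\otimes\Q$. I would settle this by quoting the precise statement of \cite[Theorem 4.4]{AM}, which is formulated for exactly this extension and for $e_\xi$ the unique lift in $\operatorname{Fil}^0$.
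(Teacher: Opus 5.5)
Your proposal is correct and follows the paper's route: the paper obtains this statement in one line by citing \cite[Theorem 4.4]{AM}. That theorem is applied to the extension $M_\xi(X/S)$ in $\text{Fil-}F\text{-MIC}(S,\sigma)$, specialized at $t=\alpha$ with the Frobenius $\sigma_\alpha$. Your extra checks are exactly the hypotheses left implicit in that citation: that $F(\alpha)\alpha^{-p}\in 1+pW$, that $\sigma_\alpha(t)|_{t=\alpha}=F(\alpha)$ makes specialization Frobenius-equivariant, and that $\alpha^3\not\equiv 1 \pmod p$ puts $P_\alpha$ in $S(W)$.
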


\begin{cor} \label{main:cor}
Let the notations and assumptions be as in Theorem \ref{syntomic_reg}. 
Suppose that $[F_{\frac13, \frac13; \frac23}(t)]_{<p}|_{t=\al^3} \not \equiv 0 \pmod{p}$. 
Let $e_{\alpha}^{\rm unit}$ be as in Theorem \ref{unit_root_formula}.    
Let 
$$Q \colon H^1_{\rm dR}(X_{\alpha, K}/K) \otimes H^1_{\rm dR}(X_{\alpha, K}/K) \to H^2_{\rm dR}(X_{\alpha, K}/K) \simeq K$$
be the cup-product pairing. 
Then we have 
$$Q(r_{\rm syn}(\xi |_{X_{\alpha, K}}), e_{\alpha}^{\rm unit})=- 2\al \widehat{\mathscr{F}}_{\frac13, \frac13; \frac23}^{(\tau_{\alpha})}(\alpha^3) Q (\omega_{\alpha}, e_{\alpha}^{\rm unit}). $$
\end{cor}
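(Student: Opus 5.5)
The plan is to specialize Theorem \ref{syntomic_reg} at $t=\al$ and pair with $e^{\rm unit}_\al$. By Theorem \ref{syntomic_reg}, with $\sigma=\sigma_\al$, we have
$$r_{\rm syn}(\xi|_{X_{\al,K}})=\we_1(\al)\omega_\al+\we_2(\al)\eta_\al=\left.\bigl(\wE_1(t)\widetilde{\omega}_t+\wE_2(t)\widetilde{\eta}_t\bigr)\right|_{t=\al}.$$
This rewriting is legitimate once each term is evaluated at $t=\al$. The functions $\we_k$ are overconvergent, so $\we_k(\al)$ makes sense because $\al^3\not\equiv1$. The functions $\wE_1/F_{\frac13,\frac13;\frac23}(t^3)$ and $\wE_2(t)F_{\frac13,\frac13;\frac23}(t^3)$ lie in the Krasner-convergent ring $K\langle t,(1-t^3)^{-1},([F_{\frac13,\frac13;\frac23}(z)]_{<p}|_{z=t^3})^{-1}\rangle$, and the hypothesis $[F_{\frac13,\frac13;\frac23}(t)]_{<p}|_{t=\al^3}\not\equiv0 \pmod p$ is exactly what permits specialization at $t=\al$ in that ring.

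Next I pair with $e^{\rm unit}_t=F_{\frac13,\frac13;\frac23}(t^3)^{-1}\widetilde{\eta}_t$. Since $Q$ is antisymmetric, $Q(\widetilde{\eta}_t,e^{\rm unit}_t)=0$, so only the $\widetilde{\omega}_t$-component survives. Using $\widetilde{\omega}_t=F_{\frac13,\frac13;\frac23}(t^3)^{-1}\omega_t$ and Theorem \ref{main:2} (with $c$ chosen below), I get
$$Q(e_\xi-\Phi_\sigma(e_\xi),e^{\rm unit}_t)=\frac{\wE_1(t)}{F_{\frac13,\frac13;\frac23}(t^3)}\,Q(\omega_t,e^{\rm unit}_t)=-2t\,\widehat{\mathscr{F}}^{(\tau)}_{\frac13,\frac13;\frac23}(t^3)\,Q(\omega_t,e^{\rm unit}_t),$$
as an identity in the convergent ring. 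The function $Q(\omega_t,e^{\rm unit}_t)$ is itself convergent, since it equals $Q(\omega_t,\widetilde{\eta}_t)/F(t^3)$, so specializing at $t=\al$ gives the formula, provided the specializations commute with $Q$ and with the fiber restriction.

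The main obstacle is the choice of Frobenius. Theorem \ref{syntomic_reg} uses $\sigma_\al(t)=F(\al)\al^{-p}t^p$, that is, $c=F(\al)\al^{-p}$. This $c$ lies in $1+pW$ because $F(\al)\equiv\al^p \pmod p$, and $\sigma_\al(\al)=F(\al)$, so $\al$ is a Teichmüller-type point compatible with the Frobenius on $W$. For this $c$ we have $c^3=F(\al^3)\al^{-3p}$, so $\tau=\tau_\al$ as required in Theorem \ref{main:2}.

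The subtle points are these:
\begin{itemize}
\item Theorem \ref{main:2} is stated for general $c\in1+pW$, so it applies with $c=F(\al)\al^{-p}$.
\item Theorem \ref{unit_root_formula} was stated for $\sigma(t)=t^p$. However, $e^{\rm unit}_t$ is defined purely de Rham-theoretically, and only its definition and the vanishing $Q(\widetilde{\eta},\widetilde{\eta})=0$ are used, so the Frobenius choice is irrelevant to it.
\item The evaluation of convergent functions at $t=\al$ must be compatible with the evaluation in Theorem \ref{syntomic_reg}. This holds because both are restrictions of elements of the same convergent ring to the residue disk of $\al$.
\end{itemize}
Checking this compatibility of specializations is the step I expect to require the most care.
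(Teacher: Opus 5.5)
Your proposal is correct and is essentially the paper's own proof, which is a single line: use Theorem \ref{syntomic_reg} and Theorem \ref{main:2}, and note that the $\widetilde{\eta}$-component dies under $Q(\cdot, e^{\rm unit}_\alpha)$ by antisymmetry. You additionally spell out three things the paper leaves implicit: the choice $c=F(\alpha)\alpha^{-p}\in 1+pW$, the resulting identification $\tau=\tau_\alpha$, and the role of the hypothesis on $[F_{\frac13,\frac13;\frac23}(t)]_{<p}|_{t=\alpha^3}$ in making the specialization at $t=\alpha$ meaningful.
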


\begin{proof}
Since $Q(e^{\rm unit}_{\alpha}, e^{\rm unit}_{\alpha})=0$, this follows from Theorems \ref{syntomic_reg} and \ref{main:2}. 
\end{proof}

\section{Transformation formula} \label{main:transform}
\subsection{Transformation formula}
Wang \cite[Conjecture 4.12]{Wang} conjectures the transformation formula between $\mathscr{F}^{(\sigma)}_{a, \ldots, a; \underline{1}}(t)$ and $\wF_{a, \ldots, a; \underline{1}}(t^{-1})$ as follows. 
Let $a \in \Z_p \setminus \Z_{\le 0}$ satisfy $a^{(r)}=a$ for some $r>0$.   
Define a polynomial by 
$$h_{a, \ldots, a; \underline{1}}(t)=\prod_{i=0}^{r-1} [F_{a^{(i)}, \ldots, a^{(i)};\underline{1}}(t)]_{<p}. $$
By \cite[Proposition 4.11(3)]{Wang}, we have 
$$W \langle t, t^{-1}, h_{a, \ldots, a; \underline{1}}(t^{-1})^{-1} \rangle = W \langle t, t^{-1}, h_{a, \ldots, a; \underline{1}}(t)^{-1} \rangle, $$ 
hence there is an involution 
$$\iota \colon W \langle t, t^{-1}, h_{a, \ldots, a; \underline{1}}(t^{-1})^{-1} \rangle \to W \langle t, t^{-1}, h_{a, \ldots, a; \underline{1}}(t)^{-1} \rangle; \quad f(t) \mapsto f(t^{-1}). $$

\begin{conj} \label{Wang_conj}
Let $\sigma(t)=ct^p$ and $\widehat{\sigma}(t)=c^{-1}t^p$ with $c \in 1+ pW$. 
Suppose that $a \in \Z_p \setminus \Z_{\le 0}$ satisfies $a^{(r)}=a$ for some $r>0$. 
Then, we have 
 \begin{align*}
\mathscr{F}^{(\sigma)}_{a, \ldots, a; \underline{1}}(t) = - \widehat{\mathscr{F}}_{a, \ldots, a; \underline{1}}^{(\widehat{\sigma})}(t^{-1})
 \end{align*}
 in the ring $W \langle t, t^{-1}, h_{a, \ldots, a; \underline{1}}(t)^{-1} \rangle$, where $\mathscr{F}^{(\sigma)}_{a, \ldots, a; \underline{1}}(t)$ is the $p$-adic hypergeometric function of logarithmic type defined in Section \ref{p_adic_logHGF} and 
 $\widehat{\mathscr{F}}_{a, \ldots, a; \underline{1}}^{(\widehat{\sigma})}(t^{-1})$ is defined as $\iota(\widehat{\mathscr{F}}_{a, \ldots, a; \underline{1}}^{(\widehat{\sigma})}(t))$.    
\end{conj}

It is known that Conjecture \ref{Wang_conj} holds modulo $p$ (\cite[Example 4.13]{Wang}). 
Wang (\cite[Theorem 4.14]{Wang}, \cite[Corollary 5.12]{Wang2}) proves that Conjecture \ref{Wang_conj} is true under the following two conditions: 
\begin{enumerate}
\item $a \in N^{-1} \Z$ and $p \nmid N$ for some $N \ge 2$; 
\item $0 < a < 1$. 
\end{enumerate}

In this paper, we give a conjecture of the transformation formula between our $p$-adic hypergeometric function $\wF_{a, a; 2a}(t)$ and a $p$-adic hypergeometric function of logarithmic type $\mathscr{F}^{(\sigma)}_{a, 1-a; 1}(t)$. 
Define polynomials by 
$$h_a(t)=\prod_{i=0}^{r-1}[F^{(i)}_{a,1-a;1}(t)]_{<p}, \quad \widehat{h}_a(t)=\prod_{i=0}^{r-1}[F^{(i)}_{a,a;2a}(t)]_{<p}. $$

\begin{prop} \label{involution}
There is an isomorphism 
$$\omega \colon W \langle t, t^{-1}, \widehat{h}_a(t)^{-1} \rangle \to W \langle t, t^{-1}, h_a(t)^{-1} \rangle; \quad \omega(f(t))=f(t^{-1}). $$
\end{prop}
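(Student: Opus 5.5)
The approach is to reduce the statement to an identity between polynomials modulo $p$. Both rings are $p$-adic completions:
$$W\langle t,t^{-1},g(t)^{-1}\rangle=\varprojlim_n W/p^nW[t,t^{-1},g(t)^{-1}].$$
The substitution $t\mapsto t^{-1}$ is an automorphism of $W[t,t^{-1}]$. It therefore suffices to show that, inside $W[t,t^{-1}]$, the multiplicative sets generated by $\{t,\widehat h_a(t^{-1})\}$ and by $\{t,h_a(t)\}$ have the same saturation modulo $p$. Concretely, I will show that
$$t^{L}\,\widehat h_a(t^{-1})\equiv u\, h_a(t)\pmod{p}$$
for some $u\in\Z_p^\times$ and some integer $L\ge 0$. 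Granting this, inverting $\widehat h_a(t^{-1})$ is the same as inverting $h_a(t)$ in $W/p^nW[t,t^{-1}]$ for every $n$, because an element that is a unit modulo $p$ is a unit modulo $p^n$ (nilpotent lifting). Hence $f(t)\mapsto f(t^{-1})$ induces compatible ring isomorphisms
$$W/p^nW[t,t^{-1},\widehat h_a(t)^{-1}]\;\simeq\; W/p^nW[t,t^{-1},h_a(t)^{-1}],$$
and passing to the inverse limit gives $\omega$. Its inverse is again $t\mapsto t^{-1}$.

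The next step is to match the factors of $h_a$ and $\widehat h_a$ index by index. Put $b=a^{(i)}$. I first check that $(1-b)'=1-b'$. Indeed, if $b+l_i\equiv 0\pmod p$ with $l_i=[b]_0$, then $1-b+(p-1-l_i)=p-(b+l_i)\equiv 0 \pmod p$, so
$$(1-b)'=\frac{p-(b+l_i)}{p}=1-b'.$$
Hence $F^{(i)}_{a,1-a;1}=F_{b,1-b;1}$ and $F^{(i)}_{a,a;2a}=F_{b,b;2b}$. Moreover, $2b$ satisfies $[2b]_0=2l_i$ by assumption (ii) of Section \ref{defFhat}, which gives $2l_i\le p-1$.

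Now I reduce modulo $p$ using $b\equiv -l_i$, $1-b\equiv 1+l_i$ and $2b\equiv -2l_i$. The two truncations become, modulo $p$,
$$[F_{b,b;2b}(t)]_{<p}\equiv\sum_{n=0}^{l_i}\frac{(-l_i)_n^2}{(-2l_i)_n\,n!}t^n,\qquad [F_{b,1-b;1}(t)]_{<p}\equiv\sum_{k=0}^{l_i}\frac{(-l_i)_k(l_i+1)_k}{k!^2}t^k.$$
The terms with $n>l_i$ vanish because $(b)_n\equiv 0$ there. The denominators $(-2l_i)_n$ are $p$-units for $n\le l_i$ since $2l_i<p$.

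A direct computation with $(-l)_n=(-1)^n l!/(l-n)!$, followed by the substitution $n=l-k$, gives the following relation between the coefficient $c_n$ of the first sum and the coefficient $d_k$ of the second:
$$c_{l-k}=(-1)^{l}\frac{l!^2}{(2l)!}\,d_k .$$
Consequently
$$t^{l_i}[F_{b,b;2b}(t^{-1})]_{<p}\equiv (-1)^{l_i}\frac{l_i!^2}{(2l_i)!}\,[F_{b,1-b;1}(t)]_{<p}\pmod p,$$
and the constant is a $p$-unit. Taking the product over $i=0,\dots,r-1$ gives the desired congruence with $L=\sum_i l_i$.

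The main obstacle is this reversal identity for the truncated polynomials. It is a terminating form of a classical ${}_2F_1$ identity (Legendre-type), so it needs care with signs and with the claim that all factorials $\le 2l_i$ are $p$-units. The rest is the formal localization and completion argument described in the first paragraph, in the style of \cite[Proposition 4.11(3)]{Wang}.
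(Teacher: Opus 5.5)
Your proposal is correct and is essentially the paper's proof. Both arguments work at each finite level $W/p^nW[t,t^{-1},g(t)^{-1}]$ and use the modulo-$p$ reversal congruence
$t^{l_i}[F^{(i)}_{a,a;2a}(z)]_{<p}|_{z=t^{-1}}\equiv(-1)^{l_i}\frac{l_i!^2}{(2l_i)!}[F^{(i)}_{a,1-a;1}(t)]_{<p} \pmod{p}$
to see that each localization inverts the other's polynomial. The only difference is that you verify this identity by a direct Pochhammer computation, whereas the paper quotes \cite[(15.8.6)]{NIST}; its constant $(-1)^{l_i}(a^{(i)})_{l_i}/(2a^{(i)})_{l_i}$ agrees with yours modulo $p$.

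One detail to tighten: for $2l_i<n<p$ the denominator $(2b)_n$ is itself divisible by $p$. So the vanishing of those coefficients modulo $p$ needs $v_p((b)_n^2)\ge 2v_p(b+l_i)>v_p(b+l_i)=v_p((2b)_n)$, not merely $(b)_n\equiv 0 \pmod{p}$.
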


\begin{proof}
It suffices to show that for any integer $n \geq 1$, there exists an isomorphism 
$$\omega_n \colon W/p^n[ t, t^{-1}, \widehat{h}_a(t)^{-1}] \to W/p^n[ t, t^{-1}, h_a(t)^{-1}] ; \quad f(t) \mapsto f(t^{-1}). $$
First, we prove that  
$$\widehat{h}_a(t^{-1}) \in (W/p^n[t, t^{-1}, h_a(t)^{-1}])^\times. $$
To prove this, it suffices to show that it is a unit modulo $pW[t, t^{-1}, h_a(t)^{-1}]$.
Let $l_i \in \{0, \ldots, p-1\}$ be the integer such that $a^{(i)}+l_i \equiv 0 \pmod{p}$. 
Then, by \cite[(15.8.6)]{NIST}, we have 
\begin{align} \label{Congruence_modp}
\begin{split}
[F^{(i)}_{a,a;2a}(z)]_{<p}|_{z=t^{-1}}   \equiv \dfrac{(a^{(i)})_{l_i}}{(2a^{(i)})_{l_i}} (-t)^{-l_i} [F^{(i)}_{a, 1-a; 1}(t)]_{<p} \pmod{p}. 
\end{split}
\end{align}
Here, we used $(2a)^{(i)}=2a^{(i)}$ and $(1-a)^{(i)}=1-a^{(i)}$. 
Since $(a^{(i)})_{l_i}/(2a^{(i)})_{l_i} \in W^{\times}$, we have 
\begin{align*}
([F^{(i)}_{a, a; 2a}(z)]_{<p}|_{z=t^{-1}})^{-1}=\dfrac{(2a^{(i)})_{l_i}}{(a^{(i)})_{l_i}} \dfrac{(-t)^{l_i}}{[F^{(i)}_{a,1-a;1}(t)]_{<p}} 
\end{align*}
in $W/pW[t, t^{-1}, h_a(t)^{-1}]$, i.e., $[F^{(i)}_{a, a; 2a}(z)]_{<p}|_{z=t^{-1}}$ is a unit in $W/pW[t, t^{-1}, h_a(t)^{-1}]$. 
Since $\widehat{h}_a(t^{-1})$ is the product of $[F^{(i)}_{a, a; 2a}(z)]_{<p}|_{z=t^{-1}}$, it is also a unit. 
Therefore, the map $\omega_n$ is defined.

Similarly, by replacing \(t\) with \(t^{-1}\) in \eqref{Congruence_modp}, we obtain a homomorphism in the opposite direction.
Thus, $\omega_n$ is an isomorphism for any $n \geq1$.
\end{proof}

\begin{conj} \label{main:conj}
We have 
\begin{align*}
\mathscr{F}^{(\sigma)}_{a, 1-a; 1}(t) = - \widehat{\mathscr{F}}^{(\widehat{\sigma})}_{a, a; 2a}(t^{-1})
\end{align*}
in the ring $W\langle t, t^{-1}, h_a(t)^{-1} \rangle$, where $\widehat{\mathscr{F}}^{(\widehat{\sigma})}_{a, a; 2a}(t^{-1})$ is defined by $\omega(\widehat{\mathscr{F}}^{(\widehat{\sigma})}_{a, a; 2a}(t))$ and $\mathscr{F}^{(\sigma)}_{a, 1-a; 1}(t)$ is the $p$-adic hypergeometric function of logarithmic type. 
\end{conj}

\begin{thm} \label{conj_mod_p}
Conjecture \ref{main:conj} holds modulo $p$. 
 \end{thm}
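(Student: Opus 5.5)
The plan is to reduce both sides to explicit polynomial ratios using the congruences with $n=1$, and then compare them coefficient by coefficient via the reflection identity \eqref{Congruence_modp}. By Theorem \ref{main:1} and Asakura's congruence, both with $n=1$, we have modulo $p$
$$\widehat{\mathscr{F}}^{(\widehat{\sigma})}_{a,a;2a}(t)\equiv \frac{[\widehat{G}^{(\widehat\sigma)}_{a,a;2a}(t)]_{<p}}{[F_{a,a;2a}(t)]_{<p}},\qquad \mathscr{F}^{(\sigma)}_{a,1-a;1}(t)\equiv\frac{[G_{a,1-a;1}(t)]_{<p}}{[F_{a,1-a;1}(t)]_{<p}}.$$
Apply $\omega$ to the first ratio and use \eqref{Congruence_modp} with $i=0$ for the denominator. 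The statement then reduces to a polynomial identity in $W/pW[t,t^{-1}]$:
$$(-t)^{l}\,[\widehat{G}^{(\widehat\sigma)}_{a,a;2a}]_{<p}(t^{-1})\equiv -\frac{(a)_l}{(2a)_l}\,[G_{a,1-a;1}(t)]_{<p}\pmod p .$$
Here $l=[a]_0$. Both truncations have degree $l$ modulo $p$, since $(a)_k\equiv 0$ for $k>l$.

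Next I would make both sides explicit modulo $p$. Write $\widehat A_k$ for the coefficients of $F_{a,a;2a}$ and $A_j$ for those of $F_{a,1-a;1}$.

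On the $\widehat{G}$ side, for $k<p$ with $k\ne l$ the Frobenius term does not contribute, and $a\equiv -l$, so $B_k\equiv \widehat A_k/(k-l)$. For $k=l$ I combine two inputs. Lemma \ref{psi_p} gives $B^\circ_l/A_l\equiv 2\psi_p(a+l)-2\psi_p(1+l)\equiv -2(\psi_p(1+l)+\gamma_p)$. The decomposition in the proof of Lemma \ref{reduced} supplies the correction for $\widehat c=c^{-1}$. Write $a+l=c_0p^n$ and set $u=(a+l)/p$. Since $\widehat A_l^{-1}(-1)^lB_p(a,a)^{-1}\equiv 1$ by \eqref{cong:2}, this correction is $-(\widehat c^{\,u}-1)/(pu)\equiv -p^{-1}\log\widehat c=p^{-1}\log c \pmod p$. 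Hence
$$B_l/\widehat A_l\equiv -2(\psi_p(1+l)+\gamma_p)+p^{-1}\log c .$$

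On the $G$ side, the term $F^{(1)}(t^\sigma)$ affects the truncation $<p$ only through its constant term, which cancels against the constant term $1$ of $F$. So
$$[G]_{<p}\equiv \psi_p(a)+\psi_p(1-a)+2\gamma_p-p^{-1}\log c+\sum_{1\le j\le l}\frac{A_j}{j}t^j .$$
Using $\psi_p(a)\equiv\psi_p(-l)=\psi_p(1+l)$ and $\psi_p(1-a)\equiv\psi_p(1+l)$ modulo $p$ (\cite[Theorem 2.6]{As} and continuity), the constant term becomes $2(\psi_p(1+l)+\gamma_p)-p^{-1}\log c$.

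Finally, I compare coefficients. The coefficient identity \eqref{Congruence_modp} (from \cite[(15.8.6)]{NIST}) reads $(-1)^l\widehat A_{l-j}\equiv \frac{(a)_l}{(2a)_l}(-1)^{j}A_j$, or the equivalent statement extracted coefficientwise. Under this identity the term $\widehat A_k/(k-l)$ with $k=l-j$ matches $-\frac{(a)_l}{(2a)_l}A_j/j$, because $1/(k-l)=-1/j$. The term $k=l$ matches the constant term, since both are $\mp\bigl(2(\psi_p(1+l)+\gamma_p)-p^{-1}\log c\bigr)$ times the same unit, using $\widehat A_l\equiv (a)_l/(2a)_l$. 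The main obstacle is this $k=l$ bookkeeping: one must check that the $c$-dependence $\pm p^{-1}\log c$ carries exactly the right sign under $\sigma\leftrightarrow\widehat\sigma$, and that Lemma \ref{psi_p} and the correction from Lemma \ref{reduced} hold modulo $p$ uniformly in $n=\operatorname{ord}_p(a+l)$. I would handle this by redoing the reduced-case computation directly at $k=l$, with $\widehat c$ general.
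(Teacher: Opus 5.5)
Your proposal is correct and is essentially the paper's proof: reduce via the $n=1$ congruences to a polynomial identity modulo $p$, match the denominators with \eqref{Congruence_modp}, and compare coefficients, handling the constant term with the same $\psi_p$ and $p^{-1}\log c$ computation (your way of reading the coefficients $t^j$, $j\ge 1$, straight off \eqref{Congruence_modp} via $1/(k-l)=-1/j$ is a slightly slicker version of the paper's explicit Pochhammer manipulation). One sign slip needs fixing: extracting coefficients from \eqref{Congruence_modp} gives $(-1)^l\widehat A_{l-j}\equiv\frac{(a)_l}{(2a)_l}A_j$ with no factor $(-1)^j$, since with that factor your matching would fail for odd $j$; in particular $\widehat A_l\equiv(-1)^l(a)_l/(2a)_l$, not $(a)_l/(2a)_l$, and this is exactly what makes your $t^0$ comparison work.
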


\begin{proof}
By \cite[Theorem 3.3]{As} and Theorem \ref{main:1}, we  have 
\begin{align*}
\mathscr{F}_{a, 1-a;1}^{(\sigma)}(t) \equiv \dfrac{G_{a,1-a;1}^{(\sigma)}(t)_{<p}}{F_{a,1-a;1}(t)_{<p}}, \quad \widehat{\mathscr{F}}_{a, a;2a}^{(\widehat{\sigma})}(t^{-1}) \equiv \left. \dfrac{\widehat{G}_{a,a;2a}^{(\widehat{\sigma})}(z)_{<p}}{F_{a,a;2a}(z)_{<p}} \right|_{z=t^{-1}} 
\end{align*}
modulo $p$. 
Let $F_1(t) \equiv F_{a,1-a;1}(t)_{<p}$, $G(t) \equiv G_{a, 1-a;1}^{(\sigma)}(t)_{<p}$, $F_2(t^{-1}) \equiv F_{a,a;2a}(z)_{<p}|_{z=t^{-1}}$ and $\widehat{G}(t^{-1}) \equiv \widehat{G}^{(\widehat{\sigma})}_{a, a; 2a}(z)_{<p}|_{z=t^{-1}}  \pmod{p}$.  
Then, by \eqref{Congruence_modp} and Lemma \ref{Beta_cong}, we have 
\begin{align*}
\dfrac{G(t)}{F_1(t)} + \dfrac{\widehat{G}(t^{-1})}{F_2(t^{-1})}  
&\equiv \dfrac{(-1)^l t^{-l}B_p(a,a)^{-1} G(t)+ \widehat{G}(t^{-1})}{(-1)^lB_p(a,a)^{-1} t^{-l}F_1(t)} \pmod{p}\\
&= \dfrac{t^{-l}B_p(a,a)^{-1} (G(t)+ (-1)^lt^{l}B_p(a,a) \widehat{G}(t^{-1}))}{B_p(a,a)^{-1} t^{-l}F_1(t)}. 
\end{align*}
We claim that $G(t)+ (-1)^l t^{l}B_p(a,a) \widehat{G}(t^{-1}) \equiv 0 \pmod{p}$. 
Note that 
$$G(t) = \sum_{k=0}^l B_k t^k, \qquad \widehat{G}(t^{-1}) = \sum_{k=0}^l \widehat{B}_k t^{-k}. $$
Hence, we have 
\begin{align*}
G(t)+ (-1)^lt^{l}B_p(a,a) \widehat{G}(t^{-1}) &= \sum_{k=0}^l B_k t^k + (-1)^l B_p(a,a) \sum_{k=0}^{l} \widehat{B}_{l-k} t^k \\
&= \sum_{k=0}^l  \left( B_k  + (-1)^l B_p(a,a) \widehat{B}_{l-k}  \right)t^k.  
\end{align*}
First, we consider the case $1 \le k \le l$. 
Note that 
$$B_k=\dfrac{1}{k} \cdot \dfrac{(a)_k(1-a)_k}{k!^2} = \dfrac{1}k\cdot \dfrac{(a)_k^2}{k!^2} \cdot \dfrac{(1-a)_k}{(a)_k}$$
and 
\begin{align*} 
(-1)^l B_p(a,a) \widehat{B}_{l-k}=\dfrac{(-1)^l B_p(a,a)}{a+l-k} \cdot \dfrac{(a)_{l-k}^2 }{(2a)_{l-k} (l-k)!}. 
\end{align*}
By Lemma \ref{Beta_cong}, we have 
$$B_p(a, a) \cdot \dfrac{l!}{(2a)_l} \equiv (-1)^l \pmod{p^2}, $$
thus we obtain  
$$(-1)^l B_p(a,a) \widehat{B}_{l-k} \equiv  -\dfrac{1}{k} \cdot \dfrac{(a)_{l-k}^2}{(l-k)!^2} \cdot \dfrac{(2a)_l}{(2a)_{l-k}} \cdot \dfrac{(l-k)!}{l!} \pmod{p}. $$
Since 
$$\dfrac{(2a)_l}{(2a)_{l-k}} = (2a+l-k) \cdots (2a+l-1) \equiv (-1)^k (1-a)_k \pmod{p}$$
and 
$$\dfrac{(l-k)!}{l!} = \dfrac1{l \cdots (l-k+1)} \equiv \dfrac{(-1)^k}{(a)_k} \pmod{p}, $$
we have 
\begin{align*}
(-1)^l B_p(a,a) \widehat{B}_{l-k} \equiv - \dfrac1k \cdot \dfrac{(a)_{l-k}^2(1-a)_k}{(l-k)!^2(a)_k} \pmod{p}. 
\end{align*}
Since we have 
$$\dfrac{(a)_k}{k!} \equiv (-1)^l \dfrac{(a)_{l-k}}{(l-k)!} \pmod{p}, $$
the first case follows.

Secondly, we consider the case $k=0$. 
We have 
\begin{align*}
&B_0 +(-1)^l B_p(a, a) \widehat{B}_l \\
&= \psi_p(a)+ \psi_p(1-a)+2 \gamma_p -p^{-1} \log(c) +(-1)^l\dfrac{B_p(a, a)}{a+l} \left(A_l -c^{-\frac{a+l}{p}} (-1)^lB_p(a, a)^{-1} \right). 
\end{align*}
Write $c=1+pz$ and $a+l=dp^n$ with $p \nmid d$. Then we have 
\begin{align*}
-p^{-1} \log(c) - \dfrac{c^{-\frac{a+l}{p}}}{a+l} = -p^{-1} (pz) -\dfrac{1}{dp^n} (1- dp^nz + \cdots)+pC' = -\dfrac1{a+l} +pC 
\end{align*}
for some $C, C' \in W$.  
Therefore, we have 
\begin{align*}
&B_0 +(-1)^l B_p(a, a) \widehat{B}_l \\
& = \psi_p(a)+ \psi_p(1-a)+2 \gamma_p + (-1)^l \dfrac{(B_p(a, a) A_l - (-1)^l)}{a+l} +pC.  
\end{align*}
By Lemma \ref{Beta_cong}, we have 
$$B_p(a, a) \dfrac{l!}{(2a)_l} \equiv (-1)^l \pmod{p^{2n}}, $$
and by the proof of \cite[Lemma 3.9]{Wang}, we have 
$$\dfrac{(a)_l^2}{l!^2} \equiv 1-2dp^n(\psi_p(1+l) + \gamma_p) \pmod{p^{2n}}. $$
Therefore, we have 
\begin{align*}
&B_0 +(-1)^l B_p(a, a) \widehat{B}_l \\
 &\equiv \psi_p(a)+ \psi_p(1-a)+2 \gamma_p - 2 (\psi_p(1+l) + \gamma_p) \pmod{p} \\ 
 &\overset{(*)}{\equiv} 2 \psi_p(a) -2 \psi_p(-l) \\
 &\overset{(**)}{\equiv}  0 \pmod{p}. 
\end{align*}
Here, $(*)$ follows from \cite[Theorem 2.6 (2)]{As}, and $(**)$ follows from $a \equiv -l \pmod p$ and \cite[(2.13)]{As}. 
Thus the second case follows. 
\end{proof}

\begin{thm} \label{main:3}
Let $p$ be a prime such that $p \equiv 1 \pmod{3}$. 
Let $\sigma(t)=ct^p$ and $\widehat{\sigma}(t)=c^{-1}t^p$ with $c \in 1+pW$.  
Then, we have 
\begin{align*}
\mathscr{F}^{(\sigma)}_{\frac13, \frac23; 1}(t) = - \widehat{\mathscr{F}}^{(\widehat{\sigma})}_{\frac13, \frac13; \frac23}(t^{-1})
\end{align*}
in the ring $W\langle t, t^{-1}, [F_{\frac13, \frac23; 1}(t)]_{<p}^{-1} \rangle$, where $\mathscr{F}^{(\sigma)}_{\frac13, \frac23; 1}(t)$ is the $p$-adic hypergeometric function of logarithmic type. 
\end{thm}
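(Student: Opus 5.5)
The plan is to compare two computations of the same syntomic regulator on the Hesse family, one near $t=0$ and one near $t=\infty$, in the spirit of Wang's proof for $s=2$. The family $X_t$ degenerates at $t=\infty$ to a N\'eron $3$-gon. In the coordinate $\lambda=t^{-3}$ near $\infty$, the relevant period $\omega_t$ is governed by the logarithmic-type series $F_{\frac13,\frac23;1}(\lambda)$, since the local monodromy is unipotent. Asakura's results on the syntomic regulator of $K_2$-symbols near a multiplicative degeneration (\cite{As}, \cite{AM}) then express the $\omega$-component of $e_\xi-\Phi_\sigma(e_\xi)$ there through $\mathscr{F}^{(\sigma)}_{\frac13,\frac23;1}(\lambda)$.

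\textbf{Step 1 (expansion at $\infty$).} Work over $W((\lambda))$ near $P_\infty$. Choose a basis $\omega^\infty,\eta^\infty$ adapted to the degeneration, with $\omega_t=t^{-1}F_{\frac13,\frac23;1}(t^{-3})\cdot(\text{unit})\,\omega^\infty$ up to normalization. Redo the arguments of Proposition \ref{GM} and Proposition \ref{Frobenius} at $\infty$. In the unipotent case the Frobenius on the invariant part is $1$ rather than $B_p(\frac13,\frac13)^{-1}$, and the kernel of $\nabla$ involves $\log\lambda$. This step produces the differential equation
$$\lambda\tfrac{d}{d\lambda}\widehat E_1^\infty=c_0\bigl(F_{\frac13,\frac23;1}(\lambda)-F_{\frac13,\frac23;1}(\lambda^{\sigma})\bigr).$$
Integrating it gives $\widehat E^\infty_1/F=c_0\,\mathscr{F}^{(\sigma)}_{\frac13,\frac23;1}(\lambda)+C'/F$. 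The constant must be fixed using the boundary of $\xi$ at $\infty$: the symbol $\{x,y\}$ restricted to the N\'eron polygon gives the $\psi_p$ and $\log c$ terms in $G$. This is exactly the content of Asakura's regulator formula for the Fermat-type degeneration, as in \cite{As} for the Legendre case.

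\textbf{Step 2 (identification).} The quantity $Q(e_\xi-\Phi_\sigma(e_\xi),e^{\rm unit})/Q(\omega_t,e^{\rm unit})$ is intrinsic, so it is a single convergent function on the overlap ring $W\langle t,t^{-1},\dots\rangle$. The unit root vectors at $0$ and at $\infty$ are proportional, because they span the same unit-root sub-$F$-crystal. Theorem \ref{main:2} computes this function near $0$ as $-2t\widehat{\mathscr{F}}^{(\tau)}_{\frac13,\frac13;\frac23}(t^3)$, and Step 1 computes it near $\infty$. Equating the two in the Krasner-analytic ring and substituting $z=t^3$, the Frobenius $\tau(z)=c^3z^p$ at $0$ corresponds to $\lambda\mapsto c^{-3}\lambda^p$ at $\infty$. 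After renaming $c^3\to c$ this yields the stated relation with $\sigma,\widehat\sigma$. Proposition \ref{involution} ensures both sides live in $W\langle t,t^{-1},[F_{\frac13,\frac23;1}(t)]_{<p}^{-1}\rangle$.

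\textbf{Step 3 (normalization).} Determine the constants $c_0$ and the normalization of $\omega^\infty$ so that the overall sign and factor are $-1$. The cheapest route is to check the identity modulo $p$, where Theorem \ref{conj_mod_p} already gives $\mathscr{F}+\widehat{\mathscr{F}}\equiv0$. Since the two sides can only differ by a constant multiple or an additive constant, matching modulo $p$ forces the constants. An additive constant multiplying $1/F$ is ruled out by Lemma \ref{HGconvergent}, using the analogue of that lemma for $F_{\frac13,\frac23;1}$.

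The main obstacle is Step 1, specifically pinning down the integration constant at $\infty$. This is where the digamma terms $\psi_p(\frac13)+\psi_p(\frac23)+2\gamma_p-p^{-1}\log c$ must come out of the boundary (tame symbol and Frobenius on the degenerate fiber). I would try first to import Asakura's computation from \cite{As}, reparametrizing the Hesse family near $\infty$ to the form he treats. Failing that, I would use the rigidity argument of Step 3: Lemma \ref{HGconvergent} kills any $C/F$ term, and the mod $p$ result fixes the remaining additive constant.
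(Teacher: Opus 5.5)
Your skeleton is the paper's. You compute $e_\xi-\Phi_\sigma(e_\xi)$ modulo the unit-root line near $t=0$ (Theorem \ref{main:2}) and in the chart $s=t^{-1}$ near $\infty$ (Theorem \ref{main:2'}), using that $s\mapsto cs^p$ and $t\mapsto c^{-1}t^p$ are the same lift on the overlap. You then compare and rename $c^3\to c$, which is allowed because cubing is bijective on $1+pW$ for $p\neq 3$. The step that fails is Step 3.

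A congruence modulo $p$ only sees a constant of $K$ modulo $p$. Theorem \ref{conj_mod_p} cannot tell a multiplicative discrepancy $1$ from $1+p$, or an additive discrepancy $0$ from $p$. So the claim that ``matching modulo $p$ forces the constants'' is false, and as written you have not shown the factor is exactly $-1$. Also, the ambiguity that integration actually produces is a term $C/F_{\frac13,\frac23;1}$, not an additive constant. Nor is the boundary of $\xi$ on the N\'eron $3$-gon the source of the constants $\psi_p(\frac13)+\psi_p(\frac23)+2\gamma_p-p^{-1}\log c$.

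The fix is to compute the constants directly. Because $e_\xi$ is intrinsic, $\nabla(e_\xi)=-d\log\xi=-6\,dt\wedge\omega_t=6\,\frac{ds}{s}\wedge\omega'_s$, with $\omega'_s=t\omega_t$. Combine this with $\Phi_{X/S,\sigma}(\widetilde\omega'_s)\equiv p\,\widetilde\omega'_s \pmod{\widetilde\eta'_s}$ and \eqref{GM2'}, and run the argument of Theorem \ref{main:2}. This gives
$s\frac{d}{ds}E_1=6\bigl(F_{\frac13,\frac23;1}(s^3)-F_{\frac13,\frac23;1}(s^3)^{\sigma}\bigr)$.
Hence $c_0=2$ and
$E_1/F_{\frac13,\frac23;1}(s^3)=2\mathscr{F}^{(\tau)}_{\frac13,\frac23;1}(s^3)+C/F_{\frac13,\frac23;1}(s^3)$.

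Here $C=0$, because $E_1/F_{\frac13,\frac23;1}(s^3)$ and $\mathscr{F}^{(\tau)}_{\frac13,\frac23;1}$ are convergent while $1/F_{\frac13,\frac23;1}$ is not. That last fact is the analogue of Lemma \ref{HGconvergent}, which you correctly flag. This is exactly why Asakura's digamma constants appear: they are the normalization that makes $G/F$ convergent.

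After that there is nothing left to fix. Compare $\wE_1\widetilde\omega_t\equiv E_1\widetilde\omega'_s$ modulo the unit-root line and use $t\omega_t|_{t=s^{-1}}=\omega'_s$. This gives
$-2t\,\widehat{\mathscr{F}}^{(\widehat\tau)}_{\frac13,\frac13;\frac23}(t^3)=2t\,\mathscr{F}^{(\tau)}_{\frac13,\frac23;1}(s^3)$
exactly. So the sign $-1$ is computed, and Theorem \ref{conj_mod_p} plays no role in the proof.
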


\subsection{Proof of Theorem \ref{main:3}}
Let $g_{K} \colon X_{K} \to \mathbb{P}^1_{K}$ be the Hesse family of cubic curves over $K$, whose fiber is given as $X_s=g_{K}^{-1}(s):=f_{K}^{-1}(s^{-1})$. 
Then, $X_s$ is written as  
$$s(x^3+y^3+1)-3xy=0. $$

\begin{dfn}
We define differential forms on $X_{K}$ by 
\begin{align*}
&\omega'_s = t \omega_t = \dfrac{dy}{sx^2-y} = \dfrac{-dx}{sy^2-x}, \\
&\eta'_s=xy \omega'_s = t\eta_t = \dfrac{xydy}{sx^2-y} = -\dfrac{xydx}{sy^2-x}. 
\end{align*}
\end{dfn}
Since $\omega'_s$ (resp. $\eta'_s$) is of the first (resp. second) kind, they define cohomology classes, and we denote them by the same letter.

Put 
\begin{align*}
&\widetilde{\omega}'_s := \dfrac1{F_{\frac13, \frac23; 1}(s^3)} \omega'_s, \\
&\widetilde{\eta}'_s := \left(s^3F_{\frac13, \frac23; 1}(s^3)-s(1-s^3) [F_{\frac13, \frac23; 1}(s^3)]' \right) \omega'_s -s^2 F_{\frac13, \frac23; 1}(s^3) \eta'_s,  
\end{align*}
where $[F_{\frac13, \frac23; 1}(s^3)]'$ denotes $\frac{d}{ds}F_{\frac13, \frac23; 1}(s^3)$.

\begin{prop}
Let $\nabla \colon H^1_{\rm dR} (X_{K}/S_{K}) \to \mathscr{O}(S_{K}) ds \otimes H^1_{\rm dR} (X_{K}/S_{K}) $ be the Gauss-Manin connection. 
It naturally extends on ${K}((s)) \otimes_{\mathscr{O}(S_{K})} H^1_{\rm dR} (X_{K}/S_{K})$ which we also write by $\nabla$. Then 
\begin{align}
&\nabla
\begin{pmatrix}
\omega'_s& \eta'_s
\end{pmatrix}
=\dfrac{ds}{(1-s^3)s}
\otimes 
\begin{pmatrix}
\omega'_s & \eta'_s
\end{pmatrix}
\begin{pmatrix}
s^3 & s \\
-s^2 & -(2-s^3)
\end{pmatrix}, \label{GM1'} \\
&\nabla
\begin{pmatrix}
\widetilde{\omega}'_s& \widetilde{\eta}'_s
\end{pmatrix}
=ds
\otimes 
\begin{pmatrix}
\widetilde{\omega}'_s & \widetilde{\eta}'_s
\end{pmatrix}
\begin{pmatrix}
0 & 0 \\
(1-s^3)^{-1}s^{-1}F_{\frac13, \frac23; 1}(s^3)^{-2} & 0
\end{pmatrix}. \label{GM2'}
\end{align}
\end{prop}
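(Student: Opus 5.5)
The plan is to obtain \eqref{GM1'} from \eqref{GM1} by the change of parameter $s=t^{-1}$, and then obtain \eqref{GM2'} from \eqref{GM1'} together with the Picard--Fuchs equation for $F_{\frac13,\frac23;1}(s^3)$. This mirrors how \eqref{GM2} was derived from \eqref{GM1} in Proposition \ref{GM}.

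\textbf{Step 1: deriving \eqref{GM1'}.} Substitute $t=s^{-1}$, so $dt=-s^{-2}ds$ and
$$\frac{dt}{1-t^3}=\frac{s\,ds}{1-s^3}.$$
The matrix in \eqref{GM1} becomes $M=\begin{pmatrix} s^{-2} & s^{-1}\\ -1 & -s^{-2}\end{pmatrix}$. Since $(\omega_t,\eta_t)=s(\omega'_s,\eta'_s)$, the Leibniz rule gives
$$\nabla(\omega'_s,\eta'_s)=-\frac{ds}{s}(\omega'_s,\eta'_s)+(\omega'_s,\eta'_s)\,\frac{s\,ds}{1-s^3}M .$$
Factoring out $\frac{ds}{(1-s^3)s}$, the matrix is $-(1-s^3)I+s^2M$. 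Because $s^2M=\begin{pmatrix}1&s\\-s^2&-1\end{pmatrix}$, this equals $\begin{pmatrix}s^3&s\\-s^2&-(2-s^3)\end{pmatrix}$, which is \eqref{GM1'}. The equation is first proved over $\mathscr{O}(S_K)$ and then extended $K((s))$-linearly, as in Proposition \ref{GM}.

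\textbf{Step 2: the Picard--Fuchs equation in $s$.} Write $\Phi(s)=F_{\frac13,\frac23;1}(s^3)$. Starting from Gauss's equation
$$z(1-z)F''+(1-2z)F'-\tfrac29F=0,\qquad z=s^3,$$
with $\frac{d}{dz}=\frac{1}{3s^2}\frac{d}{ds}$, I expect to obtain
$$\Bigl(s(1-s^3)\frac{d^2}{ds^2}+(1-4s^3)\frac{d}{ds}-2s^2\Bigr)\Phi=0 .$$
I will recheck this carefully, since the coefficients must be exactly right for the cancellation in Step 3.

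\textbf{Step 3: deriving \eqref{GM2'}.} Using \eqref{GM1'} and the Leibniz rule, I verify two identities.
\begin{itemize}
\item $\nabla\widetilde\eta'_s=0$. Expand $\widetilde\eta'_s=(s^3\Phi-s(1-s^3)\Phi')\omega'_s-s^2\Phi\,\eta'_s$. The $\eta'_s$-coefficient of $\nabla\widetilde\eta'_s$ should cancel identically. The $\omega'_s$-coefficient should be a multiple of the Picard--Fuchs operator of Step 2 applied to $\Phi$, hence zero.
\item $\nabla\widetilde\omega'_s=(1-s^3)^{-1}s^{-1}\Phi^{-2}\,ds\otimes\widetilde\eta'_s$. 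Since $\widetilde\omega'_s=\Phi^{-1}\omega'_s$, we get $\nabla\widetilde\omega'_s=\Phi^{-2}\bigl(-\Phi'\omega'_s+\Phi\nabla\omega'_s\bigr)$. Comparing with $\widetilde\eta'_s$, the required coefficient is determined by matching the $\eta'_s$-components. It then remains to check that the $\omega'_s$-components agree, which should be immediate.
\end{itemize}

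The main obstacle is purely computational bookkeeping: signs from the inversion $t\mapsto s^{-1}$, and confirming that the $\omega'_s$-coefficient in $\nabla\widetilde\eta'_s$ is exactly proportional to the Picard--Fuchs operator. If the cancellation fails, I would first recheck the Picard--Fuchs equation of Step 2, since any error there propagates directly into Step 3.
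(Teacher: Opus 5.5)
Your proposal is correct and follows the paper's proof exactly: you derive \eqref{GM1'} from \eqref{GM1} via $t=s^{-1}$ and $(\omega_t,\eta_t)=s(\omega'_s,\eta'_s)$, then derive \eqref{GM2'} from \eqref{GM1'} plus the Picard--Fuchs equation $\bigl(s(1-s^3)\frac{d^2}{ds^2}+(1-4s^3)\frac{d}{ds}-2s^2\bigr)F_{\frac13,\frac23;1}(s^3)=0$, which is precisely the equation you obtained from Gauss's equation. The cancellations you anticipate do occur: the $\eta'_s$-coefficient of $\nabla\widetilde{\eta}'_s$ vanishes identically, its $\omega'_s$-coefficient is exactly $-1$ times this operator applied to $F_{\frac13,\frac23;1}(s^3)$, and the $\omega'_s$-components in $\nabla\widetilde{\omega}'_s$ match because $(1-s^3)^{-1}s^{-1}\cdot s(1-s^3)=1$.
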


\begin{proof}
The first equation \eqref{GM1'} follows from \eqref{GM1} and the second equation \eqref{GM2'} follows from \eqref{GM1'} and the Picard-Fuchs equation 
$$\left(s(1-s^3)\dfrac{d^2}{ds^2} + (1-4s^3) \dfrac{d}{ds}- 2s^2\right)F_{\frac13, \frac23; 1}(s^3)=0. $$
\end{proof}

\begin{prop}
 Suppose that $\sigma$ is given by $\sigma(s)=cs^p$ with $c \in 1+ pW$. 
 Then we have 
 $$\Phi_{X/S, \sigma} (\widetilde{\eta}'_s) \in K \widetilde{\eta}'_s, \qquad \Phi_{X/S, \sigma} (\widetilde{\omega}'_s) \equiv p \widetilde{\omega}'_s \pmod{{K}((s)) \widetilde{\eta}'_s}. $$
\end{prop}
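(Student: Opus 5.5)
The plan is to copy the proof of Proposition \ref{Frobenius}, now working near $s=0$ (i.e. $t=\infty$) in the parameter $s$, and to use the Gauss--Manin connection \eqref{GM2'}.

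\textbf{Horizontal sections.} By \eqref{GM2'} we have $\nabla(\widetilde{\eta}'_s)=0$ and
\[
\nabla(\widetilde{\omega}'_s)=(1-s^3)^{-1}s^{-1}F_{\frac13,\frac23;1}(s^3)^{-2}\,ds\otimes\widetilde{\eta}'_s .
\]
Hence over $K((s))$, and more precisely after adjoining $\log s$, the kernel of $\nabla$ is spanned by $\widetilde{\eta}'_s$ and $\widetilde{\omega}'_s-g(s)\widetilde{\eta}'_s$. Here $g(s)=\log s+g_0(s)$ with $g_0\in sK[[s]]$.

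\textbf{The $\widetilde{\eta}'_s$-statement.} Since $\Phi_{X/S,\sigma}$ commutes with $\nabla$, it preserves the genuine power-series horizontal sections. I expect the only such sections, i.e. those lying in $K((s))\otimes H^1$ itself, to be the line $K\widetilde{\eta}'_s$, because the other horizontal section involves $\log s$. This is the standard monodromy argument at the multiplicative degeneration $s=0$: the fiber at $s=0$ is the N\'eron $3$-gon $xy=0$ plus the line at infinity. So $\Phi_{X/S,\sigma}(\widetilde{\eta}'_s)\in\operatorname{Ker}\nabla\cap K((s))\widetilde{\eta}'_s$, and this gives $\Phi_{X/S,\sigma}(\widetilde{\eta}'_s)=C\widetilde{\eta}'_s$ with $C\in K$.

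To make this rigorous without logarithms, I would write $\Phi(\widetilde{\eta}'_s)=u(s)\widetilde{\omega}'_s+v(s)\widetilde{\eta}'_s$ and apply $\nabla$. The $\widetilde{\omega}'_s$-component forces $u'=0$, so $u$ is constant. The $\widetilde{\eta}'_s$-component then gives
\[
v'(s)=-u\,(1-s^3)^{-1}s^{-1}F(s^3)^{-2}.
\]
If $u\neq0$, the right side has a nonzero residue $-u/s$, which cannot be the derivative of a Laurent series. Hence $u=0$ and $v$ is constant.

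\textbf{The $\widetilde{\omega}'_s$-statement.} Write $\Phi(\widetilde{\omega}'_s)=f_1\widetilde{\omega}'_s+f_2\widetilde{\eta}'_s$. Applying $\nabla$ and using $\Phi\nabla=\nabla\Phi$, the $\widetilde{\omega}'_s$-component gives $f_1'=0$, so $f_1$ is constant. The residue computation then relates the constants. Comparing the $ds/s$ terms, the left side $\Phi(\nabla\widetilde{\omega}'_s)$ equals $\sigma$ applied to the form $s^{-1}\,ds\cdot(\text{unit})$ times $C\widetilde{\eta}'_s$. Since $\sigma(ds/s)=p\,ds/s+d\log c$-type terms, its residue is $pC$. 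The right side has residue $f_1$ from $f_1\nabla\widetilde{\omega}'_s$, plus $f_2'$, which has no residue. So $f_1=pC$.

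\textbf{Determining $C$ (the main obstacle).} I would use the cup product, as in Theorem \ref{unit_root_formula}. $Q(\widetilde{\omega}'_s,\widetilde{\eta}'_s)$ is a nonzero constant $C_0$, since its derivative vanishes. Frobenius multiplies $Q$ by $p$, so
\[
pC_0=Q(\Phi\widetilde{\omega}'_s,\Phi\widetilde{\eta}'_s)=f_1C\,C_0=pC^2C_0 ,
\]
giving $C=\pm1$. The sign, or equivalently the unit root at the degeneration, must be fixed separately. I would read it off from the multiplicative reduction: the vanishing cycle class $\widetilde{\eta}'_s$ at the split $3$-gon has Frobenius eigenvalue $1$. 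This is the part most likely to need care, possibly via the explicit fiber at $s=0$ or a comparison with the Tate curve. That yields $\Phi(\widetilde{\eta}'_s)=\widetilde{\eta}'_s$ and $f_1=p$, as claimed.
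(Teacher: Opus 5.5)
Your first two steps are correct, and they are more explicit than the paper. The paper only says ``as in the proof of Proposition~\ref{Frobenius}'' and then quotes \cite[Proposition 4.6]{As} for $C=1$. But Proposition~\ref{Frobenius} relied on the smooth Fermat fiber at $t=0$ and Coleman's results, and nothing like that is available at $s=0$, where the fiber is the $3$-gon. Your residue argument is the natural replacement. Granting Laurent-series coefficients (as the statement does), $(1-s^3)^{-1}s^{-1}F_{\frac13,\frac23;1}(s^3)^{-2}=s^{-1}(1+O(s^3))$ has residue $1$. This kills the $\widetilde{\omega}'_s$-component of $\Phi_{X/S,\sigma}(\widetilde{\eta}'_s)$, and it gives $f_1=pC$ with $C$ the eigenvalue on $\widetilde{\eta}'_s$; note $\sigma(ds/s)=p\,ds/s$ exactly, since $c$ is a constant. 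The cup-product step is also sound, because $C_0=Q(\widetilde{\omega}'_s,\widetilde{\eta}'_s)\in\mathbb{Q}^{\times}$ is fixed by $\sigma$. It yields $C^2=1$.

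The gap is the sign. ``The weight-zero part at a split multiplicative fiber has Frobenius eigenvalue $1$'' is the right principle, but you only assert it. To make it a proof you would have to identify $\widetilde{\eta}'_s$ with a rational generator of the weight-zero part of the log-crystalline limit at $s=0$. Equivalently, you would have to show two things: that $\widetilde{\omega}'_s$ is a rational multiple of $\omega_{\rm can}$ on the Tate curve, where $\Phi(\omega_{\rm can})=p\,\omega_{\rm can}$ for the lift $q\mapsto q^p$; and that changing the Frobenius lift does not change the coefficient modulo $\widetilde{\eta}'_s$. This is exactly the non-formal input the paper imports from \cite[Proposition 4.6]{As}. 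Without it you only have $\Phi_{X/S,\sigma}(\widetilde{\omega}'_s)\equiv \pm p\,\widetilde{\omega}'_s$.

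Your reduction to $C=\pm1$ does make the gap easy to close, since for odd $p$ it suffices to know $C \bmod p$. Here is one way.
\begin{enumerate}
\item Since $\widetilde{\eta}'_s$ is horizontal, $C$ does not depend on $c$, so take $c=1$.
\item Then $e^{\rm unit}_s=F_{\frac13,\frac23;1}(s^3)^{-1}\widetilde{\eta}'_s$ satisfies $\Phi(e^{\rm unit}_s)=C\,\mathscr{F}^{\rm Dw}_{\frac13,\frac23;1}(s^3)\,e^{\rm unit}_s$. This uses $(\frac13)'=\frac13$ and $(\frac23)'=\frac23$, valid because $p\equiv 1\pmod 3$.
\item Take a Teichm\"uller point $s_0\in\mathbb{Z}_p$ with $s_0^3\not\equiv 0,1$ and $[F_{\frac13,\frac23;1}(z)]_{<p}|_{z=s_0^3}\not\equiv 0 \pmod p$. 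By Theorem~\ref{theorem:1}, the unit root of Frobenius on $X_{s_0}$ is $\equiv C\,[F_{\frac13,\frac23;1}(z)]_{<p}|_{z=s_0^3}$.
\item The unit root is also $\equiv a_p$. In turn, $a_p$ is congruent to the Hasse invariant, i.e.\ the coefficient of $(x_0y_0z_0)^{p-1}$ in $(s_0(x_0^3+y_0^3+z_0^3)-3x_0y_0z_0)^{p-1}$.
\item Expanding that power shows the coefficient is $\equiv [F_{\frac13,\frac23;1}(z)]_{<p}|_{z=s_0^3} \pmod p$. Hence $C\equiv 1$, so $C=1$.
\end{enumerate}
If no such point exists over $\mathbb{F}_p$, use a point over $\mathbb{F}_{p^f}$ with $f$ odd, take norms, and use $C^f=C$.
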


\begin{proof}
As in the proof of Proposition \ref{Frobenius}, one can show that there is a constant $C \in K$ such that 
$$\Phi_{X/S, \sigma} (\widetilde{\eta}'_s) \in K \widetilde{\eta}'_s, \qquad \Phi_{X/S, \sigma} (\widetilde{\omega}'_s) \equiv p C\widetilde{\omega}'_s \pmod{{K}((s)) \widetilde{\eta}'_s}. $$
By the similar discussions in the proof of \cite[Proposition 4.6]{As}, one can show that $C=1$, which finishes the proof. 
\end{proof}

\begin{rmk}
By the similar discussions in the proof of Theorem \ref{unit_root_formula}, we obtain $\Phi_{X/S, \sigma} (\widetilde{\eta}'_s)=\widetilde{\eta}'_s$. 
\end{rmk}

Let $\varepsilon_k(s)$ and $E_k(s)$ ($k=1, 2$) be defined by 
\begin{align*}
e_{\xi} - \Phi_{\sigma}(e_{\xi}) 
&=\varepsilon_1(s) \omega'_s + \varepsilon_2(s) \eta'_s \\ 
&=E_1(s) \widetilde{\omega}'_s + E_2(s) \widetilde{\eta}'_s. 
\end{align*}
The relation between $\e_k(s)$ and $E_k(s)$ is explicitly given by 
\begin{align*}
&\e_1(s)=E_1(s) F_{\frac13, \frac23; 1}(s^3)^{-1} + (s^3F_{\frac13, \frac23; 1}(s^3) - s(1-s^3) [F_{\frac13, \frac23; 1}(s^3)]') E_2(s),  \\
&\e_2(s)=-s^2 E_2(s)  F_{\frac13, \frac23; 1}(s^3). 
\end{align*}
By the definition, $\e_k(s)$ $(k=1, 2)$ are automatically overconvergent functions, i.e., 
$$\e_k(s) \in K[s, s^{-1}, (1-s^3)^{-1}]^{\dagger}. $$
On the other hand, since $[F_{\frac13, \frac23; 1}(s^3)]'/F_{\frac13, \frac23; 1}(s^3)$ is a convergent function (cf. \cite[Lemma 3.4]{Dw}), so is $E_1(s)/F_{\frac13, \frac23; 1}(s^3)$, i.e., 
$$\dfrac{E_1(s)}{F_{\frac13, \frac23; 1}(s^3)} \in K \langle s, s^{-1}, (1-s^3)^{-1}, ([F_{\frac13,\frac23; 1}(z)]_{<p}|_{z=s^3})^{-1} \rangle. $$

\begin{thm} \label{main:2'}
Let $\sigma$ (resp. $\tau$) be the $p$th Frobenius defined by $\sigma(s)=cs^p$ (resp. $\tau(s)=c^3s^p$) with $c \in 1+pW$. Then, we have 
\begin{align*}
\dfrac{E_1(s)}{F_{\frac13,\frac23;1} (s^3)} = 2\mathscr{F}^{(\tau)}_{\frac13, \frac23;1 }(s^3),  
\end{align*}
where $\mathscr{F}^{(\tau)}_{\frac13, \frac23;1 }(s^3)$ is the $p$-adic hypergeometric function of logarithmic type defined in Section \ref{p_adic_logHGF}. 
Hence, we have 
\begin{align*}
e_{\xi} -\Phi_{\sigma}(e_{\xi}) \equiv  2\mathscr{F}^{(\tau)}_{\frac13, \frac23;1}(s^3) \omega'_s \pmod{K((s))  \widetilde{\eta}'_s}. 
\end{align*}
\end{thm}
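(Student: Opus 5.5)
We follow the proof of Theorem \ref{main:2}, now in the coordinate $s=t^{-1}$. We apply the Gauss--Manin connection to
$$e_\xi-\Phi_\sigma(e_\xi)=E_1(s)\widetilde{\omega}'_s+E_2(s)\widetilde{\eta}'_s .$$
By \cite[Remark 4.18]{N} we have $\nabla(e_\xi)=-d\log\xi$, which we rewrite in terms of $ds$ and $\omega'_s$. Since $\omega'_s=t\omega_t$ and $dt=-s^{-2}ds$, we get $dt\wedge\omega_t=-s^{-1}\,ds\wedge\omega'_s$. Hence
$$\nabla(e_\xi)=6\,\frac{ds}{s}\,\omega'_s=6\,F_{\frac13,\frac23;1}(s^3)\,\frac{ds}{s}\,\widetilde{\omega}'_s .$$
The Frobenius $\Phi_\sigma$ on $H^1(X/S)(2)$ is $p^{-2}\Phi_{X/S,\sigma}$. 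By the preceding proposition, $\Phi_{X/S,\sigma}(\widetilde{\omega}'_s)\equiv p\,\widetilde{\omega}'_s$ modulo $K((s))\widetilde{\eta}'_s$. Also $\sigma(ds/s)=p\,ds/s$. Together these give
$$(1-\Phi_\sigma)\nabla e_\xi\equiv 6\bigl(F_{\frac13,\frac23;1}(s^3)-F_{\frac13,\frac23;1}(s^3)^{\sigma}\bigr)\frac{ds}{s}\,\widetilde{\omega}'_s \pmod{K((s))\widetilde{\eta}'_s}.$$
By \eqref{GM2'}, $\nabla(E_1\widetilde{\omega}'_s+E_2\widetilde{\eta}'_s)\equiv E_1'(s)\,ds\,\widetilde{\omega}'_s$ modulo $\widetilde{\eta}'_s$. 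Comparing the two expressions,
$$s\frac{d}{ds}E_1(s)=6\bigl(F_{\frac13,\frac23;1}(s^3)-F_{\frac13,\frac23;1}(s^3)^{\sigma}\bigr).$$

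Next we integrate and change variables to $z=s^3$. The Frobenius $\sigma(s)=cs^p$ induces $\tau(z)=c^3z^p$. The pair $(a,1-a)=(\frac13,\frac23)$ is fixed by the Dwork prime map, since $p\equiv1\pmod 3$ gives $(\frac13)'=\frac13$ and $(\frac23)'=\frac23$; hence $F^{(1)}=F$. Using $s\,d/ds=3z\,d/dz$, we obtain
$$E_1(s)=C+2\int_0^{z}\bigl(F_{\frac13,\frac23;1}(z)-F^{(1)}_{\frac13,\frac23;1}(z^{\tau})\bigr)\frac{dz}{z}.$$
Comparing with the definition of $G_{\ua;\underline1}$ in \S\ref{p_adic_logHGF}, this says $E_1(s)=2G_{\frac13,\frac23;1}(s^3)+C'$ for some constant $C'$. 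Dividing by $F_{\frac13,\frac23;1}(s^3)$, the theorem reduces to showing $C'=0$.

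This is the main obstacle. Here $F_{\frac13,\frac23;1}$ has $\ub=\underline1$, so Lemma \ref{HGconvergent} cannot be used directly in its stated form. We first try the same convergence argument as in Theorem \ref{main:2}. Both $E_1/F$ (by the convergence remark before the theorem) and $\mathscr{F}^{(\tau)}_{\frac13,\frac23;1}$ (by \cite[Corollary 3.4]{As}) lie in the Krasner ring. So if $C'\neq0$, then $1/F_{\frac13,\frac23;1}(z)$ would be convergent. We then rerun the proof of Lemma \ref{HGconvergent} with $F_{a,1-a;1}$ in place of $F_{a,a;2a}$. The identity $\overline F(t)/\overline F(t^p)=[F]_{<p}$ modulo $p$ from Theorem \ref{theorem:1} still holds. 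The counting step also goes through: $[F_{\frac13,\frac23;1}]_{<p}$ has degree $l=(p-1)/3$, while at least $p-l-1>l$ points of $\mathbb F_p$ avoid the bad locus. This mirrors Asakura's argument for $a=\frac12$ in \cite[Theorem 4.8]{As}. If that fails, the fallback is to determine $C'$ at $s=0$, where the fibre is the degenerate Néron polygon. The plan there is to compute the constant term of $E_1$ from the residue/boundary behaviour of $\xi$, following \cite[Proposition 4.6]{As}. That computation would match the $\psi_p(\frac13)+\psi_p(\frac23)+2\gamma_p-p^{-1}\log c^3$ constant appearing in $G$.
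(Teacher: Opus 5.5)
Your proposal is correct and follows the paper's route. You compute $\nabla(e_\xi)=6\frac{ds}{s}\omega'_s$ and use $\Phi_{X/S,\sigma}(\widetilde{\omega}'_s)\equiv p\,\widetilde{\omega}'_s$ to get $s\frac{d}{ds}E_1=6\bigl(F_{\frac13,\frac23;1}(s^3)-F_{\frac13,\frac23;1}(s^3)^{\sigma}\bigr)$. You then integrate in $z=s^3$ (with $F^{(1)}=F$) and kill the constant by the non-convergence of $1/F_{\frac13,\frac23;1}$, exactly as in Theorem \ref{main:2} and \cite[Theorem 4.8]{As}.

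Your check that the proof of Lemma \ref{HGconvergent} carries over to $F_{a,1-a;1}$ (degree $l=(p-1)/3$, so $p-1-l>l$) supplies a step the paper leaves implicit. To apply that lemma in $z$ rather than $s$, note that $1/F_{\frac13,\frac23;1}(s^3)$ is invariant under $s\mapsto\zeta s$, so it lies in the $z$-ring whenever it lies in the $s$-ring.
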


\begin{proof}
We compute 
$$\nabla(e_{\xi})= -d \log \xi= 6\dfrac{ds}{s} \wedge \omega'_s. $$
The rest is the same as in the proof of Theorem \ref{main:2} (see also the proof of \cite[Theorem 4.8]{As}).   
\end{proof}

\begin{proof}[Proof of Theorem \ref{main:3}]
Let $e_t^{\rm unit}$ be as in Theorem \ref{unit_root_formula}. 
Let $S'=\mathbb{P}_W^1 \setminus \{0, 1, \zeta, \zeta^2\}$. 
We can define 
$$e_s^{\rm unit}=F_{\frac13, \frac23;1}(s^3)^{-1}\widetilde{\eta}'_s \in H^1_{\rm dR}(X/S') \otimes_{\mathscr{O}(S')} K \langle s, s^{-1}, (1-s^3)^{-1}, ([F_{\frac13,\frac23; 1}(z)]_{<p}|_{z=s^3})^{-1} \rangle$$
similarly. 
Then, the isomorphism $\omega$ defined in Proposition \ref{involution} induces an isomorphism 
$$\omega \colon K  \langle t, t^{-1}, (1-t^3)^{-1}, [F_{\frac13,\frac13; \frac23}(z)]_{<p}|_{z=t^3})^{-1} \rangle \cdot e_t^{\rm unit} \xrightarrow{\simeq}  K \langle s, s^{-1}, (1-s^3)^{-1}, ([F_{\frac13,\frac23; 1}(z)]_{<p}|_{z=s^3})^{-1} \rangle \cdot e_s^{\rm unit}. $$

Let $\tau$ (resp. $\widehat{\tau}$) be the $p$th Frobenius defined by $\tau(t)=c^3t^p$ (resp. $\widehat{\tau}(t)=c^{-3} t^p$).  
By Theorems \ref{main:2} and \ref{main:2'}, we have 
$$ 2 \mathscr{F}^{(\tau)}_{\frac13, \frac23;1 }(s^3) \omega'_s \equiv \left.  - 2t \widehat{\mathscr{F}}^{(\widehat{\tau})}_{\frac13, \frac13;\frac23}(t^3) \omega_t \right|_{t=s^{-1}}$$
modulo $K \langle s, s^{-1}, (1-s^3)^{-1}, ([F_{\frac13,\frac23; 1}(z)]_{<p}|_{z=s^3})^{-1} \rangle \cdot e_s^{\rm unit}$. 
Since we have 
$$\left. t\omega_t \right|_{t=s^{-1}}= \dfrac1s \cdot \dfrac{dy}{x^2-y/s}= \omega'_s, $$
we obtain 
$$\mathscr{F}^{(\tau)}_{\frac13, \frac23;1 }(s^3) = - \widehat{\mathscr{F}}^{(\widehat{\tau})}_{\frac13, \frac13;\frac23}(s^{-3}).  $$
Since $p \neq 3$, the map 
$$\varphi \colon 1+pW \to 1+pW; \quad c \mapsto c^3$$
is an isomorphism. Hence, for any $C\in1+pW$, there exists $c\in1+pW$ such that $C=c^3$. Therefore, the theorem follows. 
\end{proof}

\section*{Acknowledgment}
The author used ChatGPT (GPT-5.6 Sol, OpenAI) to improve the English and to check the proofs for errors. The author takes full responsibility for the mathematical correctness and the final content of the manuscript.
The author is supported by JSPS KAKENHI Grant (JP26K16957) and Waseda University Grant for
Special Research Projects (Project number: 2026C-262).


\begin{thebibliography}{99}
\bibitem{As4}
M. Asakura, 
{\it Regulators of  $K_2$  of hypergeometric fibrations}, 
Res. Number Theory \textbf{4}, 2018, no. 2, Paper No. 22, 25 pp.


 \bibitem{As2}
 M. Asakura, {\it Frobenius action on a hypergeometric curve and an algorithm for computing values of Dwork's  $p$-adic hypergeometric functions}, 
Springer Proc. Math. Stat., \textbf{373}
Springer, Cham, 2021, 1--45.


 \bibitem{As} 
M. Asakura, {\it New $p$-adic hypergeometric functions and syntomic regulators}, 
J. Th\'eor. Nombres Bordeaux \textbf{35}, 2023, no. 2, 393--451.
 

\bibitem{As3}
M. Asakura, {\it A generalization of the Ross symbols in higher K-groups and hypergeometric functions II}, 
arXiv:2102.07946v3


\bibitem{AM}
M. Asakura, K. Miyatani, 
{\it Milnor  $K$-theory,  F-isocrystals and syntomic regulators}, 
J. Inst. Math. Jussieu \textbf{23}, 2024, no. 3, 1357--1415.


\bibitem{Besser}
A. Besser, 
{\it Syntomic regulators and $p$-adic integration. I. Rigid syntomic regulators,} 
Israel J. Math. {\bf 120} (2000), 291--334. 


\bibitem{Coleman}
R.~F. Coleman, 
{\it On the Frobenius matrices of Fermat curves}, 
$p$-adic analysis (Trento, 1989), 173--193, Lecture Notes in Math., \textbf{1454}, Springer, Berlin. 

 
\bibitem{Deninger}
C. Deninger, 
\textit{Deligne periods of mixed motives, $K$-theory and the entropy of certain $\Z_n$-actions},
J. Amer. Math. Soc. \textbf{10} no. 2, 259--281 (1997)
 
 
\bibitem{Dw}
B. Dwork, {\it  $p$-adic cycles}, Inst. Hautes \'Etudes Sci. Publ. Math. No. 37 (1969), 27--115.
 
\bibitem{Dw2} 
B. Dwork, {\it On $p$-adic differential equations IV}, 
Ann. Sci. Ecole Norm. Sup. tome \textbf{6}, no 3, 1973, p. 295--316.


\bibitem{Koblitz}
N. Koblitz, 
{\it $p$-adic numbers, $p$-adic analysis, and zeta-functions}, 
Grad. Texts in Math., \textbf{58}, 
Springer-Verlag, New York, 1984, xii+150 pp.


\bibitem{Morita}
Y. Morita, 
{\it A $p$-adic analogue of the $\Gamma$-functions}, 
J. Fac. Sci Univ. Tokyo \textbf{22} (1975), 256--266.


\bibitem{Nekovar_Niziol}
J. Nekov\'a\v r{} and W. Nizio\l, 
{\it Syntomic cohomology and $p$-adic regulators for varieties over $p$-adic fields}, 
Algebra Number Theory {\bf 10} (2016), no.~8, 1695--1790. 


\bibitem{N2}
Y. Nemoto, 
{\it On special values of generalized  $p$-adic hypergeometric functions of logarithmic type}, 
Ramanujan J. \textbf{70} (2026), no. 2, Paper No. 31, 27 pp. 


\bibitem{N3}
Y. Nemoto, 
{\it  Transformation formula of Dwork’s $p$-adic hypergeometric function}, 
 Bulletin of the Australian Mathematical Society,\textbf{113} (2026), no.3, 553--564.


\bibitem{N}
Y. Nemoto, 
{\it Regulator of the Hesse cubic curves and hypergeometric functions}, 
Manuscripta Math. \textbf{175} (2024), no. 3--4, 813--840.


\bibitem{Rod} 
F. Rodriguez Villegas, 
\textit{Modular Mahler measures. I},
Topics in number theory (University Park, PA, 1997), Math. Appl., \textbf{467}, Kluwer Acad. Publ., Dordrecht, 17--48 (1999)


\bibitem{Wang}
C. H. Wang, 
\textit{Congruence relations for  $p$-adic hypergeometric functions  $\widehat{\mathscr{F}}^{(\sigma)}_{a, \ldots, a}(t)$  and its transformation formula}, 
Manuscripta Math. \textbf{169} (2022), no. 3--4, 565--602.


\bibitem{Wang2} 
C. H. Wang, 
\textit{On transformation formulas of $p$-adic hypergeometric functions}, arXiv:2104.14092v3. 


\bibitem{Young}
P. T. Young, {\it Ap\'ery numbers, Jacobi sums, and special values of generalized  $p$-adic hypergeometric functions}, 
J. Number Theory \textbf{41}, 1992, no. 2, 231--255.


\bibitem{NIST}
{\it NIST Handbook of Mathematical Functions}, Edited by Frank W. J. Olver, Daniel W. Lozier, Ronald
F. Boisvert and Charles W. Clark. Cambridge Univ. Press, 2010.

\end{thebibliography}
\end{document}